\documentclass{article}
\usepackage[a4paper, portrait, margin=1.1811in]{geometry}
\usepackage[english]{babel}
\usepackage[utf8]{inputenc}
\usepackage[T1]{fontenc}
\usepackage{amsfonts, amsmath, amsthm, amssymb, bbm, mathrsfs, esint, dsfont, mathenv, relsize, stmaryrd}
\usepackage{mathtools}
\usepackage[normalem]{ulem}
\usepackage{enumerate, enumitem,multicol}
\usepackage{algorithm}
\usepackage{algpseudocode}
\usepackage{helvet}
\usepackage{etoolbox}
\usepackage{graphicx}
\usepackage{titlesec}
\usepackage{caption}
\usepackage{booktabs}
\usepackage[dvipsnames]{xcolor} 
\usepackage{csquotes}

\usepackage{setspace}
\graphicspath{ {./images/} }
\usepackage{scrextend}
\usepackage{fancyhdr}
\usepackage{graphicx}
\usepackage{subcaption}
\usepackage[backend=bibtex, natbib=true, style=authoryear, giveninits=true, maxbibnames=99, doi=false,isbn=false,url=false,eprint=false,uniquename=init]{biblatex}
\RequirePackage[colorlinks=true,citecolor=blue!50!black,linkcolor=blue!50!black,urlcolor=blue!50!black,pdfauthor=author]{hyperref}
\usepackage[capitalize,nameinlink,noabbrev]{cleveref}

\theoremstyle{plain}
\newtheorem{theorem}{Theorem}[section] % le [chapter] sert à numéroter par chapitre 
\newtheorem{proposition}[theorem]{Proposition}
\newtheorem{definition}[theorem]{Definition}
\newtheorem{coro}[theorem]{Corollary}
\newtheorem{lem}[theorem]{Lemma}
\theoremstyle{remark}

\crefname{thm}{Theorem}{Theorems}
\crefname{lem}{Lemma}{Lemmas}
\crefname{prop}{Proposition}{Propositions}
\crefname{cor}{Corollary}{Corollaries}

\def\EE{\mathbb{E}}

\def\RR{\mathbb{R}}

\def\1{\mathds{1}}
\def\H{\mathcal{H}}%%%la lettre H en ronde majuscule
\renewcommand{\hat}{\widehat}
\renewcommand{\tilde}{\widetilde}

\newlist{hypDiamond}{enumerate}{1} % Avec des Diamants
\setlist[hypDiamond,1]{label={$\diamond$}}

\makeatletter   
\patchcmd{\@maketitle}{\LARGE \@title}{\fontsize{16}{19.2}\selectfont\@title}{}{}
\makeatother

\makeatletter
\def\namedlabel#1#2{\begingroup
   \def\@currentlabel{#2}%
   \label{#1}\endgroup
}
\makeatother

\usepackage{authblk}

\newsavebox\affbox
\author[1]{\textbf{Luca Castelli}}
\affil[1]{Univ Lyon, Université Claude Bernard Lyon 1, CNRS UMR 5208, Institut Camille Jordan, F-69622 Villeurbanne, France\protect\\~ } 
\titleformat{\section}{\normalfont\fontsize{10}{15}\bfseries}{\thesection.}{1em}{}
\titleformat{\subsection}{\normalfont\fontsize{10}{15}\bfseries}{\thesubsection.}{1em}{}
\titleformat{\subsubsection}{\normalfont\fontsize{10}{15}\bfseries}{\thesubsubsection.}{1em}{}

\titleformat{\author}{\normalfont\fontsize{10}{15}\bfseries}{\thesection}{1em}{}

\title{\textbf{\huge Estimation of projection operators with Gaussian noise}\\}
\date{\today}    
\usepackage{todonotes}

\begin{document}

\pagestyle{headings}	
\newpage
\setcounter{page}{1}
\renewcommand{\thepage}{\arabic{page}}

\setlength{\parskip}{12pt} % saut après paragraphe
\setlength{\parindent}{0pt} % identation en début de paragraphe
\onehalfspacing  % \setstretch{1.3} % espace entre les lignes
	
\maketitle
	
\noindent\rule{15cm}{0.5pt}
	\begin{abstract}
	This paper focuses on random projection operators when the subspace of projection is estimated. We derive non-asymptotic upper bounds on the error between the projection onto the estimated subspace and the projection onto the underlying subspace. The provided upper bounds depend on the noise and on intrinsic properties of the estimated subspace. Several scenarios are considered according to the distribution of the estimator of the matrix spanning the subspace. The aforementioned bounds are attained under a structural assumption on the Gram matrix associated with the subspace. Regularized estimators are introduced to circumvent this assumption. An example is given in the partial least square (PLS) framework where the estimated subspace is spanned by the PLS weights.
	\end{abstract}
\noindent\rule{15cm}{0.4pt}
\textbf{\textit{Keywords}}: \textit{Projection; operator norm; partial least square}
\section{Introduction}\label{Sec:Intro}
In a high-dimensional setting, determining the intrinsic structure of data represents a major challenge. One way to extract useful information is to approximate a low-dimensional subspace thanks to dimension reduction. For this purpose, methods such as principal component analysis (PCA), principal component regression (PCR) or partial least squares (PLS) are widely used. In several models the methods aim to estimate a specific subspace that often satisfies a predefined constraint. A natural question that arises is how to assess the quality of this estimation. We are motivated to approximate a subspace $\H=[H]$, associated with a matrix $H\in\RR^{n\times K}.$ Formally, we aim to estimate this subspace by considering the random subspace $\hat{\H}=[\hat{H}]$ with $\hat{H}=H+E$.

Given a matrix $A \in \mathbb{R}^{p\times s}$, $[A]:=\mathrm{span}(A)$ denotes the subspace of $\mathbb{R}^p$ generated by the columns of$~A$. If $A \in \mathbb{R}^{s\times s}$ is a positive definite matrix, the highest and the lowest eigenvalues will be denoted respectively by $\rho(A)$ and $\rho_{\min}(A)$, its trace by $\mathrm{Tr}(A)$, while its condition number writes $\mathrm{Cond}(A)$. The diagonal matrix $\mathrm{diag}(A_{11},\dots, A_{ss})$ extracted from $A$ will be written $\mathrm{diag}(A)$. The $\ell^2$ norm on $\RR^{p}$ (or $\RR^{n}$) is written $\|.\|$.

Assuming that $\mathcal{H}$ and $\hat{\mathcal{H}}$ have dimension $K$, to measure the difference between the two spaces, one can consider the distance between the projection operators. 
\begin{equation}\label{Eq:Dist}
\mathrm{dist}(\H,\hat{\H})=\frac{1}{\sqrt{n}}|||P_{[H]}-P_{[\hat{H}]}|||,
\end{equation}
where $|||\cdot|||$ is the operator norm. We refer to \textcite{BAKSALARY} for an overview on distances and angles between subspaces.
\cref{Eq:Dist} allows subspace estimation to be reformulated as the estimation of a projection operator.
Based on this consideration we aim to estimate a projection operator $P_{[H]}:\mathbb{R}^{n}\to \mathbb{R}^{n}$ based on an estimation $\hat{H}$ of the matrix $H$, where we evaluate the estimation error between $P_{[H]}$ and $P_{[\hat{H}]}$. This is done by providing a non-asymptotic bound on the distance 
$$\frac{1}{n}|||P_{[H]}-P_{[\hat{H}]}|||^{2}.$$
This work has a major application in Partial Least Square regression (\textcite{Wold1983}). This method aims to approximate a specific subspace by projecting a response vector onto the latter (we refer to \cref{Subsec:LinearModel} for an introduction). By construction, the space considered for PLS regression is complex to compute due to its nonlinearity and the dependencies present among the columns of the matrix $H$ (see \textcite{Kramer2007} for more details). Consequently, we alleviate the challenges encountered in the study of PLS regression by considering several simple and general frameworks for the estimation of the subspace. This translates into independence assumptions on the distribution of the estimated matrix $\hat{H}=H+E$. First, we focus on a perturbed version of $H$ where each entries of $E$ are i.i.d and Gaussian. This very general case can be used to model the approximation of an arbitrary space or to represent a space that has been affected by noise. The estimation of the projection operator is linked to the inverse of the matrix $H^TH$ which results in a constraint on the eigenvalues of this matrix. Under some specific assumption on the smallest eigenvalue of the matrix $H^TH$ we establish that, with high probability,
$$\frac{1}{n}|||P_{[H]}-P_{[\hat{H}]}|||^{2}\le C\ \mathrm{Cond}(H^TH)^{3}\frac{\gamma^{2}}{\rho_{\min}(H^TH)},$$
where $C$ is a positive constant and $\gamma$ characterizes the noise level.
The provided non-asymptotic bound is dependent on a ratio between the level of noise $\gamma^{2}$ and the smallest eigenvalue $\rho_{\min}(H^TH)$ and on the choice of the basis $H$ with the condition number $\mathrm{Cond}(H^TH)$ of the Gram matrix$~H^TH$. This highlights the importance of the choice of basis through the matrix $H$.
A general approach is then developed for  more complex relations between the rows and columns of the estimated matrix$~\hat{H}$. Consequently, we consider different scenarios according to the assumptions on the matrix$~E$. Our main contribution is an upper bound for the scenario where we assume that each row follows a given multivariate normal distribution. This scenario is similar to considering a random design where each row has a specific distribution, and it can be applied in various contexts. This contribution is followed by a general scenario where we consider a vector which spans the estimated subspace thanks to a given family of matrices. This case can be seen as a generalization of the Krylov space where the structure of the matrices is less restrictive. It will be applied to the PLS framework for the estimation of the Krylov subspace spanned by the PLS weights (see \cref{Subsec:ProjOperatorPLSCase}). 

Then regularized estimators are introduced to remove the aforementioned assumption thanks to a Ridge regularization on the estimated Gram matrix. The given upper bounds for the regularized estimators are similar to those obtained under the assumption on $\rho_{\min}(H^TH)$.
\paragraph*{}The paper is structured as follows. \cref{Sec:ProjIntro} is a detailed introduction that motivates our approach. We discuss how random projection operators have already been studied in multiple contexts such as regression or PCA and we present an overview of some results associated to this concept. \cref{Subsec:ProjOperatorFramework} states the general framework and the different scenarios which will be considered with a summary of the contributions of the paper. In \cref{Sec:ProjOperatorMain}, some of the upper bounds according to two specific scenarios are presented, followed by the results obtained with their regularized versions. These have been simplified for the ease of exposition, more precise results have been proved in the appendix. Finally the PLS case is investigated with this formalism. The upper bounds related to the other scenarios are displayed in the appendix.
\section{Motivation}\label{Sec:ProjIntro}
The curse of dimensionality can be handled through dimensionality reduction by projecting the data onto a more general $K$-dimensional subspace. This principle is the foundation of PCR and PLS regression. We will highlight how projection operators appear in these methods in the linear model, as well as in other more general methods, discussing their differences and specific characteristics. 
\subsection{Linear model and projection}\label{Subsec:LinearModel}
Our investigations are highly motivated by the prediction task in linear model that involves projection.
Let us introduce the classical regression model. We observe a $n$-sample $(X_i,Y_i)$, $i=1,\dots,n$, where the $Y_i\in\RR$ are outcome variables and the $X_i\in\RR^p$ $p$-dimensional covariates. We consider a linear model with each couple $(X_i,Y_i)$, represented by the equation
\begin{equation}
\label{Eq:Modele lineaire1}
Y=X\beta +\varepsilon,
\end{equation}
where $\varepsilon=(\varepsilon_1,\dots \varepsilon_n)^T\sim\mathcal{N}\big(0,\tau^2I_{n}\big)$, $X=(X_1,\dots, X_n)^T\in \RR^{n\times p}$ and $Y=(Y_1,\dots,Y_n)^T$ $\in \RR^n$. Here and below, the matrix $I_{n}$ is the identity matrix of size $n$, the parameter $\tau >0$ characterizes the noise level and the exponent $T$ denotes the transpose operator. The design matrix $X$ is considered as deterministic. The associated Gram matrix is written $\Sigma=\frac{1}{n} X^TX$.  We denote by $\hat{\sigma}=\frac{1}{n}X^TY$ the normalized scalar product between $X$ and the response vector $Y$. The so-called population version of this last quantity is written $\sigma=\mathbb{E}(\hat\sigma)$ where $\mathbb{E}$ denotes the expectation.

\paragraph*{} Depending on the objective, the notion of projection can be used in different ways for the linear model. A first approach consists in considering situations where the variables are subject to errors (see \textcite{Fekri} and \textcite{Gillard}). This amounts to considering $\hat{X}=X+E$ and then applying the OLS estimator on the design $\hat{X}$. Hence, the term can be seen as $$\hat{X}\hat{\beta}_{OLS}=P_{[\hat{X}]}(Y).$$ Studying this model therefore amounts to considering the difference between the spaces $[X]$ and$~[\hat{X}]$. We refer to \textcite[Chapter 1 and 2]{Fuller} for further informations.

In a high dimensional context, namely when $p$ is larger of the number of observations $n$, the usual OLS estimator is not defined. There are however multiple ways to provide an estimator for the parameter $\beta$ or the prediction of $X\beta$ to circumvent this fact (see \textcite{Giraud} for a complete introduction). In particular, one approach consists in reducing the number of covariates by combining them linearly; this is the main idea of dimension reduction in regression. In this paradigm the estimation of $\beta$ includes a constraint $\beta\in[W]$ where $[W]$ is a $K-$dimensional subspace, $K<p$. We obtain the following estimator,
$$\hat{\beta}_{K,W}=\underset{u\in[W]}{\mathrm{argmin}}\ \frac{1}{n}\|Y-Xu\|^{2}.$$
One can show that $$X\hat{\beta}_{K,W}=P_{[XW]}(Y),$$ which illustrates how constraining the parameter $\beta$ to lie in a specific subspace is linked to projection operator. Thus, in the case where $W$ is estimated, one can consider $\hat{W}$ computed as an estimator of $W$. Providing inference on the estimation of the projection on the subspace $H=XW$ can be related to the framework of dimension reduction for regression by considering $\hat{H}=X\hat{W}$ through the computation of the quadratic loss in prediction.

We can write for $\hat{H}=\hat{X}$ (error in variables) or $\hat{H}=X\hat{W}$ (dimension reduction). Let $\hat{\beta}_{\hat{H}}$ being the OLS estimator on $[\hat{X}]$ for error in variables and being equal to $\hat{\beta}_{X\hat{W}}$ for dimension reduction.
\begin{equation}\label{Eq:ModelLinearDecomp}
\frac{1}{n}\|X(\hat{\beta}_{\hat{H}}-\beta)\|^{2}\le \frac{2}{n}\|P_{[\hat{H}]}(Y)-P_{[H]}(X\beta)\|^{2}+\frac{2}{n}\|X\beta-P_{[H]}(X\beta)\|^{2},
\end{equation}
where the second term in the right-hand side can be interpreted as a bias term. However, the first term can be related to projection operators with the following inequality
\begin{align*}
    \frac{2}{n}\|P_{[\hat{H}]}(Y)-X\beta\|^{2}&\le \frac{4}{n}\|\big(P_{[\hat{H}]}-P_{[H]}\big)(X\beta)\|^{2}+\frac{4}{n}\|P_{[\hat{H}]}(\varepsilon)\|^{2}.
\end{align*}
The first term in the right-hand side can be controlled by the norm $|||P_{[H]}-P_{[\hat{H}]}|||$. This inequality thus establishes a connection between the quadratic loss in prediction and the estimation of projection operators. We now describe how, in the case of dimension reduction, the matrix W can be computed.

The matrix $W$ is often designed to satisfy a specific optimization problem which depends on the objective under consideration. The optimization problem can be related to the covariates or the response variable, this leads to two different methods. 

In PCR regression, the matrix $W$ is composed of the eigenvectors of the matrix $\Sigma$.
The matrix $W$ corresponds to $XV$ where $V$ is the matrix having the eigenvectors of the Gram matrix $X^TX$ as columns vectors associated with the $K$ largest eigenvalues (we refer to \textcite{ESLI} and \textcite{Giraud} for a comprehensive introduction). The subspace $[XV]$ is not estimated in this scenario because $X$ is assumed to be deterministic. This case can be explained by the optimization problem satisfied by $V$, which consists of maximizing the available information in the matrix $X$.

In PLS regression, the optimization problem also involves the response $Y$.
This paper was first motivated by the estimation of the matrix $H=XW$ for the PLS case, relying on the construction of the PLS estimator, which is intrinsically linked to this notion. The PLS method originated first in economy (\textcite{Wold1966}) to compute principal components throught an iterative process named \textit{Non-linear Iterative Partial Least Squares} (NIPALS) (\textcite{Wold1973}). The PLS algorithm appeared in \textcite{Wold1983}. The main difference with PCR regression lies in the choice of the subspace computed by the algorithm. The subspace is computed to obtain components which are the most correlated with the vector $Y$. We refer to \textcite{Aparicio} for a complete introduction on PLS regression. The PLS algorithm gives an orthogonal basis of this subspace denoted by the matrix $W:=(w_{1},...,w_{K})$ whose columns vectors are referred to as weights.  The iterative form of the PLS algorithm makes the statistical analysis of the method difficult. \textcite{Helland}
demonstrated that $[W]=\hat{\mathcal{G}}$, where $\hat{\mathcal{G}}$ denotes the Krylov space defined as 
$$\hat{\mathcal{G}}:=[\hat{G}]\quad \text{with}\quad \hat{G}=(\hat{\sigma},\Sigma\hat{\sigma},...,\Sigma^{K-1}\hat{\sigma}).$$
This choice of basis allows for deriving an explicit formula for both the PLS estimator and a basis of the subspace $\hat{\mathcal{G}}$. 
Considering the Krylov spaces spanned by the column vectors of $G=(\sigma,\Sigma\sigma,...,\Sigma^{K-1}\sigma)$ and its sample version $\hat{G}=(\hat{\sigma},\Sigma\hat{\sigma},...,\Sigma^{K-1}\hat{\sigma})$, we observe that PLS regression aims to approximate the subspace $[XG]$ with $[X\hat{G}]$ and to project the response variable onto the latter. These two subspaces have a particular structure that makes the estimated vectors highly dependent on each other.
For PLS regression, this results in an estimation $\hat{\mathcal{H}}=[X\hat{G}]$ of the subspace $\mathcal{H}=[XG].$ Therefore, this makes the projection subspace $\hat{\mathcal{H}}$ random.
The framework of these Krylov spaces may already seem complex due to the dependence that can be found in both rows and columns of the matrix $X\hat{G}$. The matrix $X\hat{G}$ can be written 
$$X\hat{G}=XG+E,$$
where $E$ is centered and has dependent Gaussian column vectors.
Our work in this paper builds on this idea by considering simple cases where we relax dependency constraints to better study and understand the phenomenon. This approach allows us to increase the constraints on the matrices in question and recover the case of Krylov subspaces later.

\subsection{Estimation of Projection Operators} Stepping beyond the scope of this paper, we point to ongoing work on the estimation of projection operators.
We have described projection operators in a regression context, where dimension reduction is done thanks to a deterministic or random subspace with a specific distribution.
Moving beyond this regression framework, one could be inclined to focus on estimating a particular given projection operator. Spectral operators, for instance, can be used to estimate eigensubspaces. In our framework this amounts to set $H=V$ where $V$ corresponds to the eigendecomposition of the operator and $\hat{H}$ its estimation. The estimation of the Gram and covariance matrices has consistently been related to such problems through the spectrum and eigenvalues.
%As we saw for PCR and PLS regressions the spectrum of $\Sigma$ is playing a central role, whether in the prediction bounds obtained (see \textcite{Cast} and \textcite{Cast2}), the interpretation of the data (ACP), or the theoretical coordinates of our parameter $\beta$ (see \textcite{Cast2}).%
Hence estimation of the eigendecomposition of the covariance matrix induces the study of a spectral random operator. 

In this section we displayed some contributions related to estimation of projection operator on eigensubspaces. We can mention for instance \textcite{KoltchinskiiLounici2017} (see also \textcite{KoltchinskiiLounici2016PCA} for discussion on PCA) where the covariance matrix $\Sigma$ is estimated by its sample version $\hat{\Sigma}$ in  a Hilbert space $\mathcal{H}$. The sample covariance $\hat{\Sigma}$ is computed thanks to $n$ independent copies $X_{i}$ of a centered random variable $X$. The authors first provide a bound on the operator norm $|||\hat{\Sigma}-\Sigma|||$ with high probability. %The bound is of the form $|||\Sigma|||\max\left(\frac{\mathbf{r(\Sigma)}}{n},\sqrt{\frac{\mathbf{r}(\Sigma)}{n}}\right)$, up to a constant depending on the probability of the event.%
They introduced the quantity $\mathbf{r}(\Sigma)=\frac{\mathrm{Tr}(\Sigma)}{|||\Sigma|||}$, called the effective rank of the matrix $\Sigma$, to focus on projection operators on eigensubspaces. The effective rank appears in concentration inequalities for sample covariance operators (\textcite{KoltchinskiiLounici2017Gram}). It has also been used for concentration bounds on bilinear forms of spectral projectors, e.g in \textcite{KoltchinskiiLounici2016}. The authors take into account the spectral projector $P_{r}$ on the eigensubpace related to the $r$-th eigenvalue. Hence this operator is estimated by the sample version $\hat{P}_{r}$. The authors derive a concentration bound on the Hilbert-Schmidt norm $\|\hat{P}_{r}-P_{r}\|_{\mathcal{HS}}$ and around its expectation. The provided bound is given with high probability under specific assumptions on the covariance matrix $\Sigma$. The bound, up to a constant $C$, is the following:
$$|\|P_{r}-\hat{P}_{r}\|_{\mathcal{HS}}-\mathbb{E}[\|P_{r}-\hat{P}_{r}\|_{\mathcal{HS}}]|\le C\cdot \max\left(\frac{B_{r}(\Sigma)}{n},\frac{|||\Sigma|||^{2}}{ng_{r}^{2}},\frac{|||\Sigma|||^{3}}{g_{r}^{3}}\frac{\mathbf{r}(\Sigma)}{n\sqrt{n}}\right),$$ where $B_{r}(\Sigma)=|||P_{r}\Sigma P_{r}|||\cdot|||C_{r}\Sigma C_{r}|||$, $C_{r}=\sum_{s\neq r}\frac{1}{\mu_{r}-\mu_{s}}P_{s}$ where $\mu_{s}$ denotes distinct nonzero eigenvalues of the matrix $\Sigma$ arranged in decreasing order. The term $g_{r}$ is called the $r$-th spectral gap and is equal to $\mu_{r}-\mu_{r+1}>0,$ the terms $|||\Sigma|||, g_{r}$ and $\mathbf{r}(\Sigma)$ show how the eigendecomposition of the matrix is involved in the estimation of the spectral estimator $P_{r}.$ The spectral gap $g_{r}$  reflects the way of distinguishing an eigenvalue from other eigenvalues in order to separate the information, as two close eigenvalues make the estimation of the associated spectral projectors more difficult. 

The connection between their work and ours is established through the spectrum of the covariance matrices involved in the resulting bounds (we refer to \cref{Sec:ProjOperatorMain}, \cref{Th:ProjOperatorRowsI}).

In the context of Hilbert space $\mathcal{H}$ of dimension $p\in\mathbb{N}\cup\{+\infty\}$, \textcite{ReissWahl} also use an approach based on spectral projectors in order to obtain non-asymptotic upper bounds on the excess risk for principal component analysis. The authors consider the projector $P_{\le d}$ corresponding to the orthogonal projections of the first $d$-eigenvalues arranged by decreasing order of the matrix $\Sigma$ and its estimated version $\hat{P}_{\le d}$. This projection operator satisfies 
$$P_{\le d}\in\underset{P\in\mathcal{P}_{d}}{\mathrm{argmin}}\ \mathbb{E}[\|X-PX\|^{2}],$$
with  $\mathcal{P}_{d}=\{P : \mathcal{H}\to\mathcal{H}\ |\ P\text{ is an orthogonal projection of rank }d\}$. Denote $R(P):=\mathbb{E}[\|X-PX\|^{2}]$ the reconstruction error. The excess risk of the projector $\hat{P}_{\le d}$ is defined as 
$$\mathcal{E}_{d}^{PCA}:=R(\hat{P}_{\le d})-R(P_{\le d}).$$
In \textcite{ReissWahl}, the relation is highlighted thanks to a control of the Hilbert-Schmidt distance\ by the excess risk through the inequalities
\begin{equation}\label{Eq:ReissWahlSpectralProj}
\frac{2\mathcal{E}_{\le d}^{PCA}(\lambda_{d+1})}{\lambda_{1}-\lambda_{d}}\le\|\hat{P}_{\le d}-P_{\le d}\|_{\mathcal{HS}}^{2} \le\frac{2\mathcal{E}_{\le d}^{PCA}(\lambda_{d+1})}{\lambda_{d}-\lambda_{d+1}} \le \frac{2\mathcal{E}_{\le d}^{PCA}}{\lambda_{d}-\lambda_{d+1}},
\end{equation}
where $\mathcal{E}_{\le d}^{PCA}(\mu)=\displaystyle\sum_{j\le d}(\lambda_{j}-\mu)\|P_{j}\hat{P}_{>d}\|_{\mathcal{HS}}^{2}$ is a representation of the excess risk with $\hat{P}_{>d}=I-\hat{P}_{\le d}$ with $I$ the identity operator. Inequality \eqref{Eq:ReissWahlSpectralProj} shows that the excess risk bounds the Hilbert-Schmidt distance up to the spectral gap.
The main objective of \textcite{ReissWahl} is to give a non-asymptotic upper bound on 
$\mathbb{E}[\mathcal{E}_{d}^{PCA}]$.
The upper bound is depending on the spectral gap $\lambda_{j}-\lambda_{d}$ discussed previously. The bound involves the spectral elements of the covariance operator $\Sigma$ and aligns with the terms induced in the results discussed previously from \textcite{KoltchinskiiLounici2017}. The displayed bound can be adapted to Kernel PCA or functional PCA thanks to the Hilbert point of view. We refer to \textcite{ReissWahl} for a complete discussion on spectral gap and several improvements of this upper bound according to different scenarios.

\subsection{Other related methods and questions}
We have motivated the interest of the estimation of projection operator for regression in linear model and also for the specific case of eigenspaces. Moreover, one can cite other methods based on the notion of random projection that do not fall within the scope of our framework (see \cref{Sec:Intro} and \cref{Subsec:ProjOperatorFramework}). Returning to the linear model introduced in \cref{Subsec:LinearModel}, we can cite two different methods related to projections, which are based on a multiplication of the covariate matrix $X$ by a random matrix. Such approaches can be developed without solving specific optimization problems, but simply by considering a randomly generated subspace. methods are interesting but fall outside our model; however, they involve projection operators and are therefore relevant. They will not be discussed further beyond this section.
In the following, let $W\in\mathbb{R}^{K\times n}$ be a random matrix with independent entries, zero mean and unit variance. 
\paragraph*{Sketching.} Sketching is a method where the parameter $K$ needs to satisfies $n>K>p$ in order to keep more observations than the number of covariates. It consists in replacing the optimization problem $\underset{v\in\mathbb{R}^{p}}{\mathrm{argmin}}\|Y-Xv\|^{2}$ by $$\underset{v\in\mathbb{R}^{p}}{\mathrm{argmin}}\|WY-WXv\|^{2}.$$ 
It can be observed that Sketching performs a direct change of dimension, reducing from
$n\times p$ to $K\times p$. The idea of Sketching is to replicate $m\in \mathbb{N}$ versions of $(W^{(j)})_{j=1,...,m}$ of $W$, then to compute $\hat{\beta}_{S}^{(j)}$ (with respect to $W^{(j)}$ for $j=1,...,m$) minimizing the new optimization problem induced by $W^{(j)}$. The last step is to compute the Sketching estimator as $\hat{\beta}_{S}=\frac{1}{m}\displaystyle\sum_{j=1}^{m}\hat{\beta}_{S}^{(j)}.$ 
From this perspective this amounts to considering $\hat{H}=WX$. For $j\in\{1,...,m\}$, the estimator $\hat{\beta}_{S}^{(j)}$ can be associated with a random projection, as outlined by the following expression: 
$$W^{(j)}X\hat{\beta}^{(j)}_{S}=P_{[W^{(j)}X]}(W^{(j)}Y).$$
The estimator $\hat{\beta}_{S}$ is obtained by averaging the estimators constructed from each replication of $W$, which are determined through projections onto $\hat{H}$. Here, the random projections are present for the estimation of each $\hat{\beta}_{S}^{(j)}$, but there is no theoretical subspace $H$ being estimated.
We refer to \textcite[Chapter 10]{Bach} for a displayed upper bound. The advantage of this method is primarily computational. We refer to \textcite{Dobriban} for more details on Sketching.

\paragraph*{Random projections.}Another way to proceed in a high dimensional context when the dimension $K$ satisfying $p>n>K$ is to consider $W\in\mathbb{R}^{p\times K}$ with the optimization problem
$\underset{v\in\mathbb{R}^{p}}{\mathrm{argmin}}\|Y-Xv\|^{2}$ replaced by $$W\cdot\underset{v\in\mathbb{R}^{K}}{\mathrm{argmin}}\|Y-XWv\|^{2}.$$
As for the Sketching, we consider $m\in\mathbb{N}$ replications $W^{(j)}_{j=1..m}$ of $W$. For each $j=1,...,m$, we consider the estimator $\hat{\beta}_{RP}^{(j)}$ related to the optimization problem with $W^{(j)}$. The global estimator is computed as $\hat{\beta}_{RP}=\frac{1}{m}\displaystyle\sum_{j=1}^{m}\hat{\beta}_{RP}^{(j)}$. The difference with Sketching lies in the dimension. We have a dimension reduction  model where the subspace $W$ is random and not chosen to satisfy a specific optimization problem (like PCR or PLS). We recall that $W$ has a Gaussian distribution. Each estimator $\hat{\beta}_{RP}^{(j)}$ is obtained thanks to random projections $P_{[\hat{H}]}$ where $\hat{H}=XW$. We can highlight the relation with the projection operator thanks to the identity
$$X\hat{\beta}^{(j)}_{RP}=P_{[XW^{(j)}]}(Y).$$
For $j\in\{1,...,m\}$ we have $X\hat{\beta}_{RP}^{(j)}=\Pi^{(j)} Y,$
where $\Pi^{(j)}$ is the matrix of the projection operator $P_{[XW^{(j)}]}$. Let $\Theta_{\Pi}=\mathbb{E}[\Pi^{(j)}]$, the eigenvalues of this matrix are in $(0,1)$ and can be bounded in a Gaussian context. We refer to \textcite[Chapter 10]{Bach} for an upper bound and a complete discussion. This approach can be applied for kernel methods with supplementary discussions on features represented by Gaussian random projection representations (see \textcite{JohnsonWilliam} for a detailed inequality between features and their Gaussian random projection representation).

\section{General framework}\label{Subsec:ProjOperatorFramework}
In \cref{Sec:ProjIntro}, we have introduced several framework for the estimation of projection operators in various contexts. We now focus on the signal-plus-noise model described in \cref{Sec:Intro}.  

Let $H\in\mathbb{R}^{n\times K}$ having full rank $K$, and $\hat{H}=H+E$ with $E\in\mathbb{R}^{n\times K}$ an error term. Our aim is to provide non-asymptotic bounds for the estimation error 
$$\frac{1}{n}|||P_{[\hat{H}]}-P_{[H]}|||^{2}=\frac{1}{n}|||H(H^TH)^{-1}H^T-\hat{H}(\hat{H}^T\hat{H})^{-1}\hat{H}|||^{2},$$
under a minimal set of assumptions. The  formula of $P_{[\hat{H}]}$ includes the estimate of the inverse of the matrix $H^TH$, which results in the importance of the spectrum of the matrix $H^TH$. We consider four main scenarios which are associated to different distributions on $E$.
\paragraph*{Scenario 1: Independent rows and columns} We assume that the error terms $E_{ij}$ are i.i.d and follow a normal distribution. Hence we assume for all $i\in\{1,...,n\}$ and $j\in\{1,...,K\}$ $$E_{ij}\sim\mathcal{N}(0,\gamma^{2}).$$
Scenario 1 can be seen as a first step to understand the underlying mechanism between $H$ and $\hat{H}$. One example can be to consider the PCA on a design matrix $X$ where $[H]$ is the subspace spanned by $K$ eigenvectors and $\hat{H}$ a noisy version of each principal component. Another direct application lies in the linear model as detailed in \cref{Subsec:LinearModel} when one allows for error in the variables in the matrix $X$ by considering $\hat{X}=X+E$. See \textcite{Fekri} on errors in variables and \textcite{Gillard} for a complete introduction.
This scenario would allow us to quantify the error between the subspaces $[X]$ and~$[\hat{X}]$ through the distance between the projection operators.
\paragraph*{Scenario 2: Independent rows} We add dependence between the "covariates" thanks to a covariance matrix $S$, up to a noise factor. We assume for all $i\in\{1,...,n\}$ $$E_{i,\cdot}^{T}\sim\mathcal{N}_{K}(0,\gamma^{2}S), \ \text{with}\ S\in\mathbb{R}^{K\times K}.$$
Scenario 2 is a generalized version of Scenario 1 where the noise has dependence for each row. Under these assumptions, the covariates are linked through the matrix $S$. This scenario may be interpreted as the subspace being disturbed, leading to noise-induced dependencies across the columns. One can consider the same case as in Scenario 1 with the linear model that allows for errors in the variables with correlated noise.
\paragraph*{Scenario 3: Independent columns} We add dependence in "individuals" by assuming independence between the columns of $E$. We assume for all $j\in\{1,...,K\}$
$$E_{\cdot,j}\sim\mathcal{N}_{n}(0,\gamma^{2}A), \ \text{with} \ A\in\mathbb{R}^{n\times n}.$$
Scenario 3 can be seen as the symmetric counterpart of Scenario 2 where the dependence in the noise in now in the rows. As the second scenario has been described, we do not discuss this case further.
\paragraph*{Scenario 4: Generalized basis} The last scenario is a generalized version of estimated Krylov subspaces where the columns and rows are dependent. We consider a non zero vector $v\in \RR^{n}$.
Let for $j\in\{1,...,K\}$, such that $\{(A_{j}v)_{j\in\{1,...,K\}}\}$ is a basis of $[H]$ with $A_{j}\in\mathbb{R}^{n\times n}$.
We consider an estimator $\hat{v}$ such that $\hat{v}\sim\mathcal{N}(v,\gamma^{2}V)$.
then we consider $\hat{H}$ such that $\hat{H}_{\cdot,j}=A_{j}\hat{v}$. Hence $\hat{H}=H+E$ where for $j\in\{1,...,K\},$ $$E_{\cdot,j}\sim\mathcal{N}_{n}(0,\gamma^{2}A_{j}VA_{j}^{T}).$$
Remark that the rows and columns are not independent. This corresponds to the context of PLS regression, the dependence is typically complex and closer to this scenario.

\paragraph*{Summary of the contributions} We summarize all the contributions of this paper in the following table, where $H\in\mathbb{R}^{n\times K}$ with $\hat{H}=H+E$. Bounds on $\frac{1}{n}|||P_{[H]}-P_{[\hat{H}]}|||^{2}$ given in \cref{Tab:Summary} are stated with high probability. It also gives the assumptions under which the bounds are obtained.
\begin{table}[H]
\centering
\scalebox{0.89}{
\begin{tabular}{|c|c|c|c|}
\hline
\textbf{Scenario}&\textbf{Distribution} & \textbf{Assumption} & \textbf{Bound} \\
\hline
 1&$E_{ij}\sim\mathcal{N}(0,\gamma^{2})$ & $\rho_{\min}(H^TH)\ge D_{\delta} \gamma^{2}n$ & $c_{\delta,H}\dfrac{\gamma^{2}}{\rho_{\min}(H^TH)}$\\[0.3cm]
\hline
2&$E_{i,\cdot}^{T}\sim\mathcal{N}_{K}(0,\gamma^{2}S)$&$\rho_{\min}(H^TH)\ge D_{\delta}\gamma^{2}n\rho(S)$ & $c_{\delta,H}\gamma^{2}\dfrac{\rho(S)}{\rho_{\min}(H^TH)}$\\[0.3cm]
\hline
3&$E_{\cdot,j}\sim\mathcal{N}_{n}(0,\gamma^{2}A)$& $\displaystyle\rho_{\min}(H^TH)\ge D_{\delta}\gamma^{2}n\rho(A)$& $c_{\delta,H}\gamma^{2}\dfrac{\rho(A)}{\rho_{\min}(H^TH)}$\\[0.3cm]
\hline
4&$E_{\cdot ,j} \sim \mathcal{N}_{n}(0,\gamma^2A_jVA_j^T)$&$\displaystyle\rho_{\min}(H^TH)\ge D_{\delta,K}\gamma^{2}\sum_{j=1}^{K}\mathrm{Tr}(A_{j}VA_{j}^{T})$& $\displaystyle c_{H}D_{\delta,K}\frac{\gamma^{2}}{n}\sum_{j=1}^{K}\frac{\mathrm{Tr}(A_{i}VA_{i}^{T})}{\rho_{\min}(H^TH)}$\\
\hline
4&PLS & $\displaystyle\rho_{\min}(\Theta)\ge D_{\delta,K}\frac{\tau^{2}}{n}\sum_{j=1}^{K}\mathrm{Tr}(\Sigma^{2i})$&$c_{\Theta}D_{\delta,K}\dfrac{\tau^{2}}{n}\displaystyle\sum_{j=1}^{K}\frac{\mathrm{Tr}(\Sigma^{2j})}{n\rho_{\min}(\Theta)}$\\[0.3cm]
\hline
\end{tabular}}
\caption{Summary of results. The quantities are $D_{\delta,K}=c_{\delta}\ln(K+1), \ c_{\delta,H}=c_{\delta}^{'}\mathrm{Cond}(H^TH)^{3},\ c_{H}=\mathrm{Cond}(H^TH)^{3},\ c_{\Theta}=\mathrm{Cond}(\Theta)^{3}$ and $\Theta=G^T\Sigma G$ with $G$ defined in \cref{Subsec:LinearModel}. $\delta>0$ and the bounds are obtained with probability higher than $1-\delta$.}\label{Tab:Summary}
\end{table}

\section{Main results}\label{Sec:ProjOperatorMain}
This section consists of two main parts, the first presents the upper bounds attained thanks to an assumption on the distribution parameters of the noise, while the second demonstrates how to circumvent the previous assumptions through regularization. In this section, we concentrate our attention on Scenarios 2 and 4, the results related to the others scenarios are displayed in the appendix for the ease of exposition. \cref{Sec:AppendixA} in Appendix provides more precise bounds than those stated in this section. They were simplified for the sake of comprehension.
The proofs are postponed to \cref{Sec:ProjOperatorSkeleton} for the upper bounds obtained thanks to an assumption and to \cref{Sec:ProjOperatorRegularizedProof} for the regularized estimators of projection operators.
\subsection{Projection operator estimation}\label{Subsec:ProjOperatorEstimation}
We first state the result for Scenario 2 where all the columns of $E$ are independent.
\begin{theorem}\label{Th:ProjOperatorRowsI}
Let $\delta\in(0,1)$. Consider Scenario 2 and assume that 
\begin{equation}\label{Eq:ProjOperatorRowsAssumI}
     \frac{1}{4}\rho_{\min}(H^TH)>d_{\delta}n\gamma^{2}\rho(S),
\end{equation}
with $d_{\delta}$ with depending only on $\delta$. Then with probability higher than $1-\delta$, then exists a constant~$D_{\delta}^{(1)}>0$, depending only on $\delta$, such that 
\begin{equation}\label{Eq:BoundProjOperatorRows}
\frac{1}{n}|||P_{[H]}-P_{[\hat{H}]}|||^{2}\le D_{\delta}^{(1)}\gamma^{2}\mathrm{Cond}(H^TH)^{3}\frac{\rho(S)}{\rho_{\min}(H^TH)}.
\end{equation}
\end{theorem}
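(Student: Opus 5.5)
We bound $|||P_{[H]}-P_{[\hat H]}|||$ deterministically in terms of $|||E|||$ and the spectrum of $H^TH$, and then control $|||E|||$ with high probability using Gaussian concentration.

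\textbf{Step 1 (noise bound).} Write $E=ZS^{1/2}\gamma$, where $Z\in\RR^{n\times K}$ has i.i.d.\ $\mathcal N(0,1)$ entries. Then
$$|||E|||\le\gamma\rho(S)^{1/2}|||Z|||.$$
A standard Gaussian matrix bound (Gordon/Davidson--Szarek together with Lipschitz concentration) gives
$$|||Z|||\le\sqrt n+\sqrt K+\sqrt{2\ln(1/\delta)}$$
with probability at least $1-\delta$. Since $K\le n$, this yields $|||E|||^2\le d_\delta n\gamma^2\rho(S)$ on an event $\Omega_\delta$ of probability at least $1-\delta$, for some $d_\delta$ depending only on $\delta$. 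This fixes the $d_\delta$ in \eqref{Eq:ProjOperatorRowsAssumI}.

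\textbf{Step 2 (invertibility of $\hat H^T\hat H$).} On $\Omega_\delta$, assumption \eqref{Eq:ProjOperatorRowsAssumI} gives $|||E|||\le\frac12\rho_{\min}(H^TH)^{1/2}$. By Weyl's inequality for singular values,
$$\sigma_{\min}(\hat H)\ge\sigma_{\min}(H)-|||E|||\ge\tfrac12\rho_{\min}(H^TH)^{1/2}.$$
Hence $\rho_{\min}(\hat H^T\hat H)\ge\frac14\rho_{\min}(H^TH)$, so $\hat H$ has full rank $K$ and $P_{[\hat H]}=\hat H(\hat H^T\hat H)^{-1}\hat H^T$. Similarly $\rho(\hat H^T\hat H)\le\frac94\rho(H^TH)$.

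\textbf{Step 3 (perturbation of the projector).} Rather than invoking Wedin's $\sin\Theta$ theorem, which would give the sharper bound $\frac{1}{n}|||P-\hat P|||^2\lesssim\gamma^2\rho(S)/\rho_{\min}$, I would follow the explicit route the paper suggests. This route is what produces the $\mathrm{Cond}^3$ factor.

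Set $M=H^TH$ and $\hat M=\hat H^T\hat H=M+\Delta$ with $\Delta=H^TE+E^TH+E^TE$, and decompose
$$P_{[\hat H]}-P_{[H]}=E\hat M^{-1}\hat H^T+H\hat M^{-1}E^T+H(\hat M^{-1}-M^{-1})H^T.$$
Using $\hat M^{-1}-M^{-1}=-\hat M^{-1}\Delta M^{-1}$ and $|||\Delta|||\le 2\rho(M)^{1/2}|||E|||+|||E|||^2\le 3\rho(M)^{1/2}|||E|||$, each term is bounded by a product of $|||E|||$, powers of $\rho(M)^{1/2}$, and powers of $\rho_{\min}(M)^{-1}$, where Step 2 is used to control $|||\hat M^{-1}|||\le 4/\rho_{\min}(M)$. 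The worst term is the third one:
$$|||H|||^2\,|||\hat M^{-1}|||\,|||\Delta|||\,|||M^{-1}|||\lesssim\rho(M)^{3/2}\,|||E|||/\rho_{\min}(M)^2.$$
Writing this as
$$\mathrm{Cond}(M)^{3/2}\,\frac{|||E|||}{\rho_{\min}(M)^{1/2}}$$
and squaring gives
$$|||P_{[\hat H]}-P_{[H]}|||^2\le C\,\mathrm{Cond}(M)^3\,\frac{|||E|||^2}{\rho_{\min}(M)}.$$
The other two terms carry smaller powers of $\mathrm{Cond}(M)$ and are absorbed since $\mathrm{Cond}\ge1$.

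\textbf{Step 4 (conclusion).} On $\Omega_\delta$, plug in $|||E|||^2\le d_\delta n\gamma^2\rho(S)$ and divide by $n$. This gives \eqref{Eq:BoundProjOperatorRows} with $D^{(1)}_\delta=Cd_\delta$.

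The main obstacle is Step 1: obtaining a clean high-probability bound on $|||E|||$ with a constant depending only on $\delta$ and not on $K$. This requires handling the $\sqrt K$ term via $K\le n$, and making the same $d_\delta$ serve both in the assumption and in the bound. The algebra of Step 3 is routine once the invertibility from Step 2 is secured.
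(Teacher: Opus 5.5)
Your proof is correct, and it takes a genuinely different route from the paper's. Your Step~1 is essentially the paper's event $\mathcal{A}_\delta$, which the paper obtains from \cref{Prop:VershyninRows}. From there the two arguments diverge. The paper fixes a unit vector $u$ and splits $\|(P_{[H]}-P_{[\hat H]})u\|^2$ via \cref{Th:ProjOperatorSkeletonProof} into the quadratic forms $\mathrm{I},\mathrm{II},\mathrm{III}$, with weights $1$, $\mathrm{Cond}(H^TH)\rho_{\min}(H^TH)^{-1}$ and $\mathrm{Cond}(H^TH)^3$. It bounds each with the deviation inequality of \cref{Prop:DeviationResult} applied to $u$-dependent Gaussian vectors, then takes the supremum over $u$ to reach the finer \cref{Th:ProjOperatorRows}. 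Finally it simplifies using $K\le n$ and $\mathrm{Tr}(S)\le K\rho(S)$.

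You instead work deterministically on one event using operator-norm submultiplicativity. This makes the origin of $\mathrm{Cond}(H^TH)^3$ transparent and makes the bound uniform in $u$ for free. That uniformity is a real gain. In the paper the exceptional set of each deviation inequality depends on $u$, so passing to the supremum is not justified as written, and some intermediate bounds do fail uniformly. For $S=I_K$, $\sup_{\|u\|=1}\mathrm{III}=|||E(H^TH)^{-1/2}|||^2\ge\|Ev\|^2/\rho_{\min}(H^TH)$, which is of order $n\gamma^2/\rho_{\min}(H^TH)$ for $v$ a bottom unit eigenvector of $H^TH$; the paper asserts $C_\delta\gamma^2K/\rho_{\min}(H^TH)$. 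The simplified statement still holds because it only involves $\rho(S)/\rho_{\min}(H^TH)$, which is exactly what your argument yields.

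Your Weyl step also gives the correct constant in $\rho_{\min}(\hat H^T\hat H)\ge\frac14\rho_{\min}(H^TH)$. \cref{Prop:ConsequenceAssInverse} claims $\frac12$, but only after dropping the factor $4$ in front of $\rho(E^TE)$; this affects constants only.

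The paper's fixed-$u$ template is designed to expose finer scenario-dependent quantities such as $\rho(S(H^TH)^{-1})$ and $\mathrm{Tr}(S)$, and it is reused for all scenarios and for the ridge estimators. Your argument transfers just as easily, since it only needs a high-probability bound on $|||E|||^2$; in Scenario 4 the paper's event already gives one through $|||E|||^2\le\sum_j\|E_{\cdot,j}\|^2$.

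Your side remark about Wedin is also right. From $(I-P_{[\hat H]})H=-(I-P_{[\hat H]})E$ and $(I-P_{[H]})\hat H=(I-P_{[H]})E$, one gets $|||P_{[H]}-P_{[\hat H]}|||\le 2|||E|||\,\rho_{\min}(H^TH)^{-1/2}$ on your event. So the $\mathrm{Cond}(H^TH)^3$ factor is an artifact of the decomposition, not of the problem.
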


A sketch of proof is presented in Appendix \ref{Sec:ProjOperatorSkeleton} which holds for all scenarios. A detailed version of $D^{(1)}_{\delta}$ is given in \cref{Subsub:Scen12} thanks to \cref{Th:ProjOperatorSkeletonProof} and respectively \cref{Sec:ProjOperatorAdapt} for a detailed writing of $d_{\delta}$.

\cref{Th:ProjOperatorRowsI} requires that the smallest eigenvalue of the Gram matrix $H^TH$ needs to be large enough to ensure the consistency of the estimation of the projection operator $P_{[H]}$. This level is related to the number of observations $n$ and the noise level $\gamma^{2}$. Assumption \eqref{Eq:ProjOperatorRowsAssumI} can be viewed as a distinction of the subspace relative to the noise, which would lead to a poor estimation of $H$ if the dimension $n$ or the noise level are too high. Technically Assumption \eqref{Eq:ProjOperatorRowsAssumI} is required to ensure the proper behavior of $(\hat{H}^T\hat{H})^{-1}.$ The obtained upper bound is dependent on the ratio $\gamma^{2}/\rho_{\min}(H^TH)$ where the comparison of the noise level with the smallest eigenvalue appears. As for estimation of projection operator on eigensubspaces in \textcite{ReissWahl} we recover the dependence in eigenvalues in the upper bound like in \cref{Eq:ReissWahlSpectralProj}. The term $\mathrm{Cond}(H^TH)$ in the final bound plays a fundamental role in the estimation of the projection operator. The comparison between the matrix $H$ with the smallest eigenvalue of the Gram matrix $H^TH$  and the noise level can be controlled thanks to $\rho(S)$. The upper obtained bound is related to the spectrum of the covariance matrix $S$. We can highlight the spectrum of the matrix $H^TH$ in the the right hand side of \cref{Eq:BoundProjOperatorRows} which describes how the approximation depends on the choice of the basis 
$H$ of the subspace $[H]$ and how the estimation of $H^TH$ can affect the estimation of the projection operator $P_{[H]}$.
The bound obtained here can be linked to the one related to Scenario 1 just by adding $\rho(S)$ (see discussion of \cref{Th:ProjOperatorIndependent} in Appendix \ref{Sec:AppendixA}). This highlights the fact that adding dependence in rows does not change significantly the behavior of the estimator. The same procedure can be done for the dependence in columns in Scenario 3 (see \cref{Th:ProjOperatorCollumns} in Appendix \ref{Sec:AppendixA}).

\begin{theorem}\label{Th:ProjOperatorGeneralized}
Let $\delta\in(0,1)$. Consider Scenario 4 and assume that 
\begin{equation}\label{Eq:ProjOperatorGeneralizedAssum}
     \frac{1}{4}\rho_{\min}(H^TH)>d_{\delta,K}\gamma^{2}\sum_{i=1}^{K}\mathrm{Tr}(A_{i}VA_{i}^{T}),
\end{equation}
with $d_{\delta,K}=b_{\delta}\ln\big(\frac{K}{\delta}\big)$ and $b_{\delta}$ depending only on $\delta$.
Then with probability higher than $1-\delta$, there exists a constant $D^{(2)}_{\delta}>0$, such that
\begin{equation*}
\frac{1}{n}|||P_{[H]}-P_{[\hat{H}]}|||^{2}\le \gamma^{2}D_{\delta}^{(2)}\ln\big(\frac{K}{\delta}\big)\mathrm{Cond}(H^TH)^{3} \bigg(\displaystyle\sum_{i=1}^{K}\frac{\mathrm{Tr}(A_{i}VA_{i}^{T})}{n\rho_{\min}(H^TH)}\bigg).
\end{equation*}
\end{theorem}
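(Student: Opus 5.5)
The argument is deterministic once a few norms of $E$ are controlled, so I would split it into a perturbation inequality followed by a concentration step.

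\emph{Deterministic step.} Write $P=P_{[H]}$ and $\hat P=P_{[\hat H]}$, and set $M=H^TH$ and $\hat M=\hat H^T\hat H=M+\Delta$, where
$$\Delta=H^TE+E^TH+E^TE.$$
Then
$$\hat P-P=(H+E)\hat M^{-1}(H+E)^T-HM^{-1}H^T .$$
Expanding gives
$$\hat P-P=H(\hat M^{-1}-M^{-1})H^T+E\hat M^{-1}H^T+H\hat M^{-1}E^T+E\hat M^{-1}E^T,$$
with $\hat M^{-1}-M^{-1}=-\hat M^{-1}\Delta M^{-1}$. If $|||\Delta|||\le \frac12\rho_{\min}(M)$, Weyl's inequality gives $\rho_{\min}(\hat M)\ge \frac12\rho_{\min}(M)$, hence $|||\hat M^{-1}|||\le 2/\rho_{\min}(M)$. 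The factor $\frac14$ in \eqref{Eq:ProjOperatorGeneralizedAssum} is designed to guarantee this.

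Using $|||H|||^2=\rho(M)$, each term is then bounded by a power of $|||E|||$ times powers of $\rho(M)$ and $\rho_{\min}(M)^{-1}$. For instance,
$$|||H\hat M^{-1}\Delta M^{-1}H^T|||\lesssim \rho(M)\,\frac{\sqrt{\rho(M)}\,|||E|||+|||E|||^2}{\rho_{\min}(M)^2}.$$
Under the assumption $|||E|||^2\le \rho_{\min}(M)$, all terms are at most a constant times $\mathrm{Cond}(M)^{3/2}|||E|||/\sqrt{\rho_{\min}(M)}$. Squaring gives
$$|||\hat P-P|||^2\le C\,\mathrm{Cond}(M)^3\,\frac{|||E|||^2}{\rho_{\min}(M)},$$
which is exactly the structure of the claimed bound once divided by $n$. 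I would state this as a general deterministic lemma valid for all scenarios.

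\emph{Probabilistic step.} It remains to show that, with probability at least $1-\delta$,
$$|||E|||^2\le C_\delta\ln(K/\delta)\,\gamma^2\sum_{i=1}^K \mathrm{Tr}(A_iVA_i^T).$$
This is also what turns \eqref{Eq:ProjOperatorGeneralizedAssum} into the condition $|||E|||^2\le \frac14\rho_{\min}(M)$ used above. Since the columns of $E$ are dependent, I would bound the operator norm crudely by the Frobenius norm:
$$|||E|||^2\le \|E\|_F^2=\sum_{j=1}^K\|E_{\cdot,j}\|^2 .$$
Each $E_{\cdot,j}=A_j(\hat v-v)$ is a Gaussian vector with covariance $\gamma^2A_jVA_j^T$. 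For each $j$, a Gaussian concentration bound (Hanson–Wright, or Lipschitz concentration of $\|A_jV^{1/2}g\|$) gives
$$\|E_{\cdot,j}\|^2\le \gamma^2\bigl(\sqrt{\mathrm{Tr}(A_jVA_j^T)}+\sqrt{2|||A_jVA_j^T|||\ln(K/\delta)}\bigr)^2\le c\,\gamma^2\ln(K/\delta)\,\mathrm{Tr}(A_jVA_j^T)$$
with probability at least $1-\delta/K$. A union bound over $j=1,\dots,K$ then produces the $\ln(K/\delta)$ factor, and summing over $j$ gives the claim.

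\emph{Main obstacle.} The delicate point is the bookkeeping in the deterministic step. One has to check that every cross term, notably $E\hat M^{-1}E^T$ and the $E^TE$ part of $\Delta$, is dominated by $\mathrm{Cond}(M)^{3/2}|||E|||/\sqrt{\rho_{\min}(M)}$, and not by a higher power of the condition number, on the event given by the assumption. A cleaner alternative I would try first is the identity
$$|||\hat P-P|||=|||(I-P)\hat P|||,$$
valid because both projections have rank $K$. Since $(I-P)\hat H=(I-P)E$, this gives
$$|||\hat P-P|||\le |||E|||\,|||\hat M^{-1/2}|||\le \sqrt2\,|||E|||/\sqrt{\rho_{\min}(M)}.$$
This bound is even sharper than needed, and $\mathrm{Cond}(M)^3\ge1$ absorbs the difference.
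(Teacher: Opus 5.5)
Your argument is correct up to one fixable step, discussed at the end. Its probabilistic half is the paper's. There, too, the event comes from $\rho(E^TE)\le\mathrm{Tr}(E^TE)=\sum_j\|E_{\cdot,j}\|^2$, a deviation bound for each column at level $\delta/K$, and a union bound, which is the source of $\ln(K/\delta)$.

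The deterministic half takes a different route. The paper fixes a unit vector $u$ and bounds $\|(P_{[H]}-P_{[\hat H]})u\|^2$ by three terms, $\mathrm{I}=\|E\Lambda_u\|^2$, $\mathrm{II}=\|(\hat H^T\hat H-H^TH)\Lambda_u\|^2$ and $\mathrm{III}=\|H(H^TH)^{-1}E^Tu\|^2$. Their weights are $1$, $\mathrm{Cond}(H^TH)\rho_{\min}(H^TH)^{-1}$ and $\mathrm{Cond}(H^TH)^3$; the last is where the cube comes from. It bounds each term by applying the Gaussian deviation inequality to $u$-dependent vectors such as $E\Lambda_u$ or $H^TE\Lambda_u$, and only then takes the supremum over $u$. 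That template is designed to be reused in Scenarios 1--3, where direction-dependent variances like $\Lambda_u^TS\Lambda_u$ appear. However, a supremum of fixed-$u$ high-probability bounds is not itself a high-probability bound without a net argument.

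In your version all randomness enters through the single quantity $\|E\|_F$, so uniformity in $u$ is automatic. In Scenario 4 nothing is lost: each of the paper's bounds on I, II, III follows deterministically from $|||E|||^2\le\|E\|_F^2$, e.g. $\mathrm{I}\le|||E|||^2\|\Lambda_u\|^2$ and $\mathrm{III}\le|||E|||^2/\rho_{\min}(H^TH)$. Your $\sin\Theta$ variant is strictly better. It gives $|||P_{[\hat H]}-P_{[H]}|||^2\le 4|||E|||^2/\rho_{\min}(H^TH)$ on the event, so the factor $\mathrm{Cond}(H^TH)^3$ in the statement is an artifact of the decomposition. (Neither route needs the $\ln K$ either. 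With $\mathcal{A}=(A_1^T,\dots,A_K^T)^T$, $\|E\|_F^2=\|\mathcal{A}(\hat v-v)\|^2$ is a single Gaussian quadratic form with trace $\gamma^2\sum_j\mathrm{Tr}(A_jVA_j^T)$.)

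The step to repair is the lower bound on $\rho_{\min}(\hat M)$, which both of your routes use. You derive it from Weyl's inequality for $\Delta$, claiming the factor $\frac14$ forces $|||\Delta|||\le\frac12\rho_{\min}(M)$. It does not, because $H^TE+E^TH$ can have norm close to $2\sqrt{\rho(M)}\,|||E|||$. For instance, take $\mathrm{Cond}(M)=1$ and the deterministic perturbation $E=-\epsilon H$ with $\epsilon<\frac12$ close to $\frac12$. Then $|||E|||^2<\frac14\rho_{\min}(M)$, yet $\rho_{\min}(\hat M)=(1-\epsilon)^2\rho_{\min}(M)\approx\frac14\rho_{\min}(M)$, so the constant $\frac12$ is false.

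Use Weyl for singular values instead: $s_{\min}(H+E)\ge s_{\min}(H)-|||E|||$. This gives $\rho_{\min}(\hat M)>\frac14\rho_{\min}(M)$ as soon as $|||E|||^2<\frac14\rho_{\min}(M)$. Your event and the assumption provide that once $b_\delta$ is at least your concentration constant. It also ensures $\mathrm{rank}(\hat H)=K$, which the identity $|||\hat P-P|||=|||(I-P)\hat P|||$ requires. Only constants change: your $\sqrt2$ becomes $2$. The paper's own claim $\rho_{\min}(\hat H^T\hat H)\ge\frac12\rho_{\min}(H^TH)$ has the same defect, since its proof drops a factor $4$ in front of $\rho(E^TE)$.
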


As for \cref{Th:ProjOperatorRows}, a sketch of proof is given in Appendix \ref{Sec:ProjOperatorSkeleton}. The detailed proof for this specific scenario is given in \cref{Subsub:Scen4}. See \cref{Subsub:Scen12} for an expression of $D^{(2)}$ and
\cref{SubsubsecScen4} for a writing of $d_{\delta}$.

This scenario is very specific and aims to be a generalization of the PLS case, where the subspace in question is a Krylov space. In the PLS context, the subspace $\mathcal{G}=[\sigma,\Sigma\sigma,...,\Sigma^{K-1}\sigma]$ is estimated through the use of $\hat{\sigma}$. Here in Scenario 4, the considered vector is $v$ and a basis of $H$ is $\{A_{1}v,...,A_{K}v\}$. Assumption \eqref{Eq:ProjOperatorGeneralizedAssum} is quite similar to the previous ones, it compares the smallest eigenvalue of $H^TH$ to the variance of each columns. What results is then the sum of the variances of each columns which is expected below the smallest eigenvalue to ensure a good estimation of the inverse of the Gram matrix. The term $\ln(K)$ is a counterpart related to the number of vectors (number of columns) on which we apply our concentration inequalities. The final upper bound is similar to the others bounds obtained for the previous scenarios with a ratio between the variance terms of the columns and the eigenvalues of the Gram matrix. The only difference is the counterpart $\ln(K)$.

All the previous theorems state upper bounds on the operator norm of the difference between the projection operator and its estimation up to an assumption on the Gram matrix $H^TH$. As seen in \textcite{Cast2} on PLS regression with $K$ components, the matrix can cause several issues if it is ill-conditioned. To circumvent these problematic situations, we introduce a Ridge regularization in each scenario.
\subsection{Regularized versions}\label{Subsec:ProjOperatorRegularized} In this section, regularized versions of the previous estimators are introduced by replacing $(\hat{H}^T\hat{H})^{-1}$ by $(\hat{H}^T\hat{H}+\alpha I_{K})^{-1}$. The regularized projection operator $P_{[\hat{H}]_{\alpha}}$ is defined as follows:
\begin{equation}\label{Eq:ProjOperatorRegularizedEstimator}
P_{[\hat{H}]_{\alpha}}=\hat{H}(\hat{H}^T\hat{H})_{\alpha}^{-1}\hat{H}^T,
\end{equation}
where $(\hat{H}^{T}\hat{H})_{\alpha}=(\hat{H}^T\hat{H}+\alpha I_{K})$.

The regularization introduced in \cref{Eq:ProjOperatorRegularizedEstimator} allows us to avoid making an assumption on the matrix $H$, that is, to get rid of Assumption \eqref{Eq:ProjOperatorRowsAssumI} and \eqref{Eq:ProjOperatorGeneralizedAssum}. By considering the regularized operator we assume the matrix $H$ has full rank $K$, which is quite natural. The parameter $\alpha$ is related to the variance term of the distribution on the matrix $E$ depending on the considered scenario. This approach succeeds for all the theorems stated previously, as detailed below (see Appendix for the others scenarios). 

\begin{theorem}\label{Th:ProjOperatorRowsPenaliniseI}
Let $\delta\in (0,1)$. Consider Scenario 2 and set 
\begin{equation}
\alpha=2d_{\delta}n\gamma^{2}\rho(S).
\end{equation}
Then with probability larger than $1-\delta$, there exists a constant $\tilde{D}^{(1)}_{\delta}>0$ such that
\begin{equation}
    \frac{1}{n}|||P_{[H]}-P_{[H]_{\alpha}}|||^{2}\le \tilde{D}^{(1)}_{\delta}\gamma^{2}\mathrm{Cond}(H^TH)^{3}\frac{\rho(S)}{\rho_{\min}(H^TH)}.
\end{equation}
\end{theorem}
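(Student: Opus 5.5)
We split $P_{[H]}-P_{[\hat H]_\alpha}$ into an unregularized part and a regularization bias, and treat two regimes separately. Let $\mathcal{E}$ be the event on which the Gaussian concentration bounds from the proof of \cref{Th:ProjOperatorRowsI} hold. By \cref{Sec:ProjOperatorAdapt} this event has probability at least $1-\delta$, and on it
\[
|||E|||^2\le d_\delta n\gamma^2\rho(S), \qquad |||H^TE|||\le c_\delta\gamma\sqrt{n\rho(S)\rho(H^TH)}.
\]
The first bound comes from writing $E=ZS^{1/2}\gamma$ with $Z$ an $n\times K$ standard Gaussian matrix and using $|||Z|||\lesssim\sqrt n+\sqrt K+\sqrt{\ln(1/\delta)}$. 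The second uses the fact that $H^TZ$ has Gaussian entries with covariance controlled by $H^TH$. Work on $\mathcal{E}$ throughout.

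\textbf{Case 1: $\frac14\rho_{\min}(H^TH)>d_\delta n\gamma^2\rho(S)$.} Then $\alpha=2d_\delta n\gamma^2\rho(S)<\frac12\rho_{\min}(H^TH)$. Write
\[
P_{[\hat H]}-P_{[\hat H]_\alpha}=\hat H\big[(\hat H^T\hat H)^{-1}-(\hat H^T\hat H+\alpha I_K)^{-1}\big]\hat H^T=\alpha\,\hat H(\hat H^T\hat H)^{-1}(\hat H^T\hat H+\alpha I_K)^{-1}\hat H^T .
\]
Its operator norm is at most $\alpha/\rho_{\min}(\hat H^T\hat H)$, since $\hat H(\hat H^T\hat H)^{-1}\hat H^T$ has norm one and the remaining factor commutes with it through the SVD of $\hat H$. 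Weyl's inequality, $\rho_{\min}(\hat H^T\hat H)^{1/2}\ge\rho_{\min}(H^TH)^{1/2}-|||E|||$, together with the case assumption gives $\rho_{\min}(\hat H^T\hat H)\ge\frac14\rho_{\min}(H^TH)$. Hence
\[
\frac1n|||P_{[\hat H]}-P_{[\hat H]_\alpha}|||^2\le\frac{16\alpha^2}{n\rho_{\min}(H^TH)^2}\le \frac{32\alpha\, d_\delta\gamma^2\rho(S)}{\rho_{\min}(H^TH)}\le 16 d_\delta\gamma^2\frac{\rho(S)}{\rho_{\min}(H^TH)},
\]
using $\alpha<\frac12\rho_{\min}$ in the last step. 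Combining with \cref{Th:ProjOperatorRowsI} through $(a+b)^2\le 2a^2+2b^2$, and absorbing into $\mathrm{Cond}(H^TH)^3\ge1$, gives the claim.

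\textbf{Case 2: the assumption of Case 1 fails.} This is the main obstacle, because $\hat H^T\hat H$ may be nearly singular. Here the trivial bound suffices: $|||P_{[H]}|||\le1$, and $|||P_{[\hat H]_\alpha}|||\le1$ because its eigenvalues are $s_i^2/(s_i^2+\alpha)\in[0,1)$, where the $s_i$ are the singular values of $\hat H$. Thus $\frac1n|||P_{[H]}-P_{[\hat H]_\alpha}|||^2\le 4/n$. The failed assumption gives $1\le 4d_\delta n\gamma^2\rho(S)/\rho_{\min}(H^TH)$, so
\[
\frac4n\le 16d_\delta\gamma^2\frac{\rho(S)}{\rho_{\min}(H^TH)}.
\]
Again this is absorbed since $\mathrm{Cond}(H^TH)\ge1$.

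Taking $\tilde D^{(1)}_\delta=2D^{(1)}_\delta+32d_\delta$ covers both cases. The only delicate points are making sure the constants $d_\delta$ of \cref{Th:ProjOperatorRowsI} and of the regularization are the same, so that the case split is exhaustive, and that a single probability event $\mathcal{E}$ serves for both the theorem and Weyl's bound. I would check the latter in \cref{Sec:ProjOperatorAdapt}. If $d_\delta$ there is only defined through $|||E|||^2$, the argument goes through unchanged.
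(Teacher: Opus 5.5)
Your argument is correct, apart from a typo noted below, and it follows a genuinely different route from the paper. Like the paper's proof, you read the left-hand side as $P_{[H]}-P_{[\hat H]_\alpha}$; taken literally, $P_{[H]}-P_{[H]_\alpha}$ is just the deterministic bias of \cref{Subsec:ProjOperatorBiasRegularized}.

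The paper never splits on the size of $\rho_{\min}(H^TH)$. It writes $P_{[H]}-P_{[\hat H]_\alpha}=(P_{[H]}-P_{[H]_\alpha})+(P_{[H]_\alpha}-P_{[\hat H]_\alpha})$ and bounds the squared bias by $\alpha/\rho_{\min}(H^TH)$. It handles the second difference with a ridge version of the three-term skeleton (\cref{Th:ProjOperatorSkeletonProofRegularized}). There \cref{Prop:EigenControl} replaces Assumption E.1: the ridge term forces $\rho_{\min}((\hat H^T\hat H)_\alpha)\ge\rho_{\min}(H^TH)/2$ and $\rho(E^TE)\le\alpha/2$. The terms $\mathrm{I}_\alpha$, $\mathrm{II}_\alpha$, $\mathrm{III}_\alpha$ are then bounded by Gaussian deviation inequalities, as without regularization.

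You instead reduce to \cref{Th:ProjOperatorRowsI}. Under the signal condition, the identity $P_{[\hat H]}-P_{[\hat H]_\alpha}=\alpha\hat H(\hat H^T\hat H)^{-1}(\hat H^T\hat H+\alpha I_K)^{-1}\hat H^T$ together with Weyl shows the ridge costs at most $16d_\delta\gamma^2\rho(S)/\rho_{\min}(H^TH)$. When the condition fails, the right-hand side is at least a constant multiple of $1/n$, which already dominates the trivial bound.

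What your route buys:
\begin{itemize}
\item It is much shorter and needs only the spectral-norm event $\mathcal{A}_\delta$.
\item It adapts immediately to the other scenarios.
\item It makes explicit that, in this simplified form, the regularized theorem says nothing beyond \cref{Th:ProjOperatorRowsI}.
\item It sidesteps a weak point of the paper's argument. The event $\mathcal{B}_\delta$ is phrased through $\mathrm{Tr}(E^TE)\le d_\delta n\gamma^2\rho(S)$. Since $\EE\,\mathrm{Tr}(E^TE)=n\gamma^2\mathrm{Tr}(S)$, this is not a high-probability event in Scenario 2 when $\mathrm{Tr}(S)\gg d_\delta\rho(S)$ (e.g.\ $S=I_K$ with $K$ large). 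It is repairable, since only $\rho(E^TE)$ is really used in \cref{Prop:EigenControl}.
\end{itemize}
What the paper's route buys is a direct, case-free analysis of the ridge estimator. That is the natural template for term-by-term bounds such as those of \cref{Th:ProjOperatorRowsPenalinise}. For the statement at hand, nothing is lost with yours.

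\textbf{Minor points.}
\begin{itemize}
\item The middle term of your Case~1 display should have $\rho_{\min}(H^TH)^2$ in the denominator. The final value $16d_\delta\gamma^2\rho(S)/\rho_{\min}(H^TH)$ is right.
\item The case split is exhaustive whatever the constants; mismatched constants would only change $\tilde D^{(1)}_\delta$.
\item The single-event issue you flag is harmless. The paper proves \cref{Th:ProjOperatorRowsI} on an event contained in $\mathcal{A}_\delta$. Even treating that theorem as a black box, a union bound at level $\delta/2$, with the split made at $d_{\delta/2}$, only changes $\delta$-dependent constants.
\item Your bound on $|||H^TE|||$ is never used.
\item If you prefer not to lean on \cref{Th:ProjOperatorRowsI} (its proof applies fixed-$u$ deviation bounds before taking the supremum over $u$), Case~1 closes directly. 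We have $(I_n-P_{[H]})P_{[\hat H]}=(I_n-P_{[H]})E(\hat H^T\hat H)^{-1}\hat H^T$, and the two projections have the same rank. Hence $|||P_{[H]}-P_{[\hat H]}|||\le|||E|||\,\rho_{\min}(\hat H^T\hat H)^{-1/2}\le 2|||E|||\,\rho_{\min}(H^TH)^{-1/2}$ on your event. This yields the claim even without the factor $\mathrm{Cond}(H^TH)^3$.
\end{itemize}
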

\cref{Th:ProjOperatorRowsPenaliniseI} provides a similar upper bound to \cref{Th:ProjOperatorRowsI} in the context of Scenario 2. The parameter $\alpha$ is set depending of the variance term $S$. A general proof outline is given in Appendix \ref{Sec:ProjOperatorRegularizedProof} which holds for all scenarios. The details related to this scenario are given in \cref{Subsub:Scen12Reg}.

We recover the bound stated in \cref{Th:ProjOperatorRowsI}. The case of the dependence on the rows (Scenario 2) is then naturally transferred to the case of dependence on the columns with Scenario 3 (see the Appendix).
We also provide a regularized version of \cref{Th:ProjOperatorGeneralized} for Scenario 4. In a sense, regularization helps counterbalance the effect of noise, which may degrade the rank of the matrix $\hat{H}$.
\begin{theorem}\label{Th:ProjOperatorGeneralizedPenalinise}
Let $\delta\in (0,1)$. Consider Scenario 4 and set 
\begin{equation}
\alpha=2d_{\delta,K}\gamma^{2}\sum_{i=1}^{K}\mathrm{Tr}(A_{i}VA_{i}^{T}).
\end{equation}
Then with probability larger than $1-\delta$, there exists a constant $\tilde{D}^{(2)}_{\delta}>0$ such that
\begin{equation}
    \frac{1}{n}|||P_{[H]}-P_{[H]_{\alpha}}|||^{2}\le \tilde{D}^{(2)}_{\delta}\gamma^{2}\ln(\frac{K}{\delta})\mathrm{Cond}(H^TH)^{3}\displaystyle\sum_{i=1}^{K}\frac{\mathrm{Tr}(A_{i}VA_{i}^T)}{n\rho_{\min}(H^TH)}.
\end{equation}
\end{theorem}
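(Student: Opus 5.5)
The bound is deterministic once $E$ is controlled on a high-probability event, so I would split the argument into a concentration step and a deterministic perturbation step. Write $\Lambda:=\sum_{i=1}^{K}\mathrm{Tr}(A_{i}VA_{i}^{T})$ and $M:=H^TH$. The probabilistic input is a bound on the operator norm of $E$. Each column $E_{\cdot,j}=A_j(\hat v-v)$ is Gaussian with $\mathbb{E}\|E_{\cdot,j}\|^2=\gamma^2\mathrm{Tr}(A_jVA_j^T)$. Gaussian concentration of $\|A_j(\hat v-v)\|$ (a Lipschitz function of a standard Gaussian, via Borell--TIS) together with a union bound over the $K$ columns gives, with probability at least $1-\delta$,
\begin{equation*}
|||E|||^{2}\le \sum_{j}\|E_{\cdot,j}\|^{2}\le b_\delta\ln\big(\tfrac{K}{\delta}\big)\gamma^{2}\Lambda=d_{\delta,K}\gamma^2\Lambda.
\end{equation*}
This is where the $\ln(K/\delta)$ factor comes from. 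The same constant $d_{\delta,K}$ is used in the choice of $\alpha$, so on this event $|||E|||^2\le \alpha/2$.

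On this event I would decompose the error as
\begin{equation*}
P_{[H]}-P_{[\hat H]_\alpha}=\big(H M^{-1}H^T-HM_\alpha^{-1}H^T\big)+\big(HM_\alpha^{-1}H^T-\hat H(\hat H^T\hat H)_\alpha^{-1}\hat H^T\big),
\end{equation*}
where $M_\alpha=M+\alpha I_K$. The first term is the regularization bias. It equals $\alpha HM^{-1}M_\alpha^{-1}H^T$, whose norm is at most $\alpha\rho(M)/(\rho_{\min}(M)(\rho_{\min}(M)+\alpha))\le \mathrm{Cond}(M)\,\alpha/\rho_{\min}(M)$.

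The second term is the stochastic part. I would expand $\hat H=H+E$ and $\hat H^T\hat H+\alpha I=M_\alpha+\Delta$ with $\Delta=H^TE+E^TH+E^TE$, and then use the resolvent identity $(M_\alpha+\Delta)^{-1}-M_\alpha^{-1}=-(M_\alpha+\Delta)^{-1}\Delta M_\alpha^{-1}$. The pieces linear in $E$ are bounded by $|||E|||\,|||H|||/\rho_{\min}(M_\alpha)$ and $|||E|||^2/\rho_{\min}(\cdot)$.

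The key point, and the expected obstacle, is a lower bound on $\rho_{\min}(\hat H^T\hat H+\alpha I)$ that holds without Assumption \eqref{Eq:ProjOperatorGeneralizedAssum}. I would prove $\hat H^T\hat H+\alpha I\succeq c\,(M+\alpha I)$ for an absolute constant $c$. Write $u^T\hat H^T\hat Hu=\|Hu+Eu\|^2\ge \tfrac12\|Hu\|^2-\|Eu\|^2$, so that
\begin{equation*}
\hat H^T\hat H+\alpha I\succeq \tfrac12 M+(\alpha-|||E|||^2)I\succeq \tfrac12(M+\alpha I),
\end{equation*}
using $|||E|||^2\le\alpha/2$. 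Hence $|||(\hat H^T\hat H)_\alpha^{-1}|||\le 2/(\rho_{\min}(M)+\alpha)$. The cross terms then collect into a bound of the form
\begin{equation*}
|||\cdot|||\lesssim \mathrm{Cond}(M)^{3/2}\frac{|||E|||\sqrt{\rho(M)}}{\rho_{\min}(M)+\alpha}+\frac{|||E|||^2}{\rho_{\min}(M)+\alpha}.
\end{equation*}
The powers of $\mathrm{Cond}$ arise from replacing $\rho(M)$ by $\mathrm{Cond}(M)\rho_{\min}(M)$. Tracking them carefully so the final exponent is $3$ after squaring is bookkeeping in the spirit of the skeleton in Appendix \ref{Sec:ProjOperatorSkeleton}.

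Finally I would square the bound and divide by $n$, using $(a+b)^2\le 2a^2+2b^2$. Both $\alpha$ and $|||E|||^2$ are at most a multiple of $\ln(K/\delta)\gamma^2\Lambda$. The bias term contributes $\mathrm{Cond}(M)^2\alpha^2/\rho_{\min}(M)^2$. To get a linear (not quadratic) dependence on $\alpha/\rho_{\min}(M)$, I would also use the trivial bound $|||P_{[H]}-P_{[\hat H]_\alpha}|||\le 2$ and take the minimum $\min(x^2,x)\le x$ with $x\lesssim \mathrm{Cond}(M)\alpha/\rho_{\min}(M)$. Collecting terms gives $\tilde D^{(2)}_\delta\gamma^2\ln(K/\delta)\mathrm{Cond}(M)^3\Lambda/(n\rho_{\min}(M))$.
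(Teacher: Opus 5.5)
Your proof is correct. It shares the paper's outer structure: the split into the regularization bias $P_{[H]}-P_{[H]_\alpha}$ and the stochastic part $P_{[H]_\alpha}-P_{[\hat H]_\alpha}$, the single event $\sum_j\|E_{\cdot,j}\|^2\le d_{\delta,K}\gamma^2\sum_j\mathrm{Tr}(A_jVA_j^T)=\alpha/2$, and a lower bound on $\rho_{\min}((\hat H^T\hat H)_\alpha)$ as in \cref{Prop:EigenControl}. The stochastic part, however, is handled differently.

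The paper fixes a unit vector $u$ and expands $\|P_{[H]_\alpha}u-P_{[\hat H]_\alpha}u\|^2$ into the terms $\mathrm{I}_\alpha,\mathrm{II}_\alpha,\mathrm{III}_\alpha$ of \cref{Th:ProjOperatorSkeletonProofRegularized}. It then applies the Gaussian deviation inequality to the $u$-dependent vectors $E\Lambda_{u,\alpha}$, $H^TE\Lambda_{u,\alpha}$, $E^TH\Lambda_{u,\alpha}$ and $(H^TH)_\alpha^{-1/2}E^Tu$, and only afterwards takes the supremum over $u$.

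You instead bound everything in operator norm through $|||E|||\le\|E\|_F$ and the resolvent identity, so the only probabilistic input is the one event on $\|E\|_F^2$. This gains you two things. First, the supremum over $u$ is automatic. In the paper the deviation bounds hold on events that depend on $u$, so passing to the supremum would strictly need an extra uniform or net argument. Second, you lose nothing in rate: in Scenario 4 all of the paper's per-direction bounds reduce to $\|\Lambda_{u,\alpha}\|^2\sum_l\mathrm{Tr}(A_lVA_l^T)$, which is Frobenius control anyway. The per-direction machinery pays off only in Scenarios 1--3, where it can keep structure-sensitive quantities such as $\rho(S(H^TH)^{-1})$. Your inequality $\hat H^T\hat H+\alpha I\succeq\frac12(H^TH+\alpha I)$ also keeps the $\alpha$ that the paper's lower bound drops.

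One bookkeeping correction. Your display $\mathrm{Cond}(M)^{3/2}|||E|||\sqrt{\rho(M)}/(\rho_{\min}(M)+\alpha)$ carries an extra factor $\mathrm{Cond}(M)^{1/2}$; squared, it would give $\mathrm{Cond}(M)^4$, not $\mathrm{Cond}(M)^3$. The crude submultiplicative bounds you list give, for the worst piece $H(\hat H^T\hat H)_\alpha^{-1}H^TEM_\alpha^{-1}H^T$, the bound $2\rho(M)^{3/2}|||E|||/(\rho_{\min}(M)+\alpha)^2\le 2\mathrm{Cond}(M)^{3/2}|||E|||/\sqrt{\rho_{\min}(M)}$. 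Its square is $\lesssim\mathrm{Cond}(M)^3\alpha/\rho_{\min}(M)$. The pieces quadratic in $E$ are linearized by $|||E|||^2/(\rho_{\min}(M)+\alpha)\le 1/2$.

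You can in fact do better. Your inequality gives $H(\hat H^T\hat H)_\alpha^{-1}H^T\preceq 2HM_\alpha^{-1}H^T\preceq 2I_n$, and also $|||M_\alpha^{-1}H^T|||\le(\rho_{\min}(M)+\alpha)^{-1/2}$. Together these remove the condition numbers entirely and give a bound of order $\alpha/(n(\rho_{\min}(M)+\alpha))$. Similarly, the bias has norm exactly $\alpha/(\rho_{\min}(M)+\alpha)$, whose square is at most $\alpha/\rho_{\min}(M)$, so you do not need the trivial-bound trick.
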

The proof and details related to this scenario are presented in Appendix \ref{Sec:ProjOperatorRegularizedProof} and \cref{Subsub:Scen4Reg}.

The upper bound is similar to the one given in \cref{Th:ProjOperatorGeneralized}. The parameter $\alpha$ is naturally related to \cref{Eq:ProjOperatorGeneralizedAssum} and depends on $\ln(K/\delta)$. We recover a ratio between $\frac{\gamma^{2}}{n}\displaystyle\sum_{i=1}^{K}\mathrm{Tr}(A_{i}VA_{i}^{T})$, which can be related to the distribution of the noise through the matrix $E$, and the smallest eigenvalue $\rho_{\min}(H^TH)$ linked to the matrix $H$.
\subsection{The PLS case}\label{Subsec:ProjOperatorPLSCase}
As stated previously in \cref{Subsec:LinearModel}, PLS regression can be seen as a projection on a Krylov subspace.
Let consider the PLS regression over the subspace spanned by $H=XG\in\RR^{n\times K}$ where $G=(\sigma,\Sigma\sigma,...,\Sigma^{K-1}\sigma)\in\RR^{p\times K}$.
We assume that $H$ is full rank. We set $\hat{v}=X\hat{\sigma}$ where $v=X\sigma$. We are hence in Scenario 4. The matrix $V$ corresponds here to $\Sigma$, the noise level is $\gamma^{2}=\frac{\tau^{2}}{n}$, and for all $i\in\{1,...,K\}, A_{i}=X\Sigma^{i-1}$.
Assumption \eqref{Eq:ProjOperatorGeneralizedAssum} applied for the PLS regression becomes 
\begin{equation}\label{Eq:ProjOperatorPLSApp}
\frac{\rho_{\min}(G^TX^TXG)}{4}\ge d_{\delta,K}\frac{\tau^{2}}{n}\sum_{i=1}^{K}\mathrm{Tr}(X\Sigma^{2i-1}X^T).
\end{equation}
Let $\Theta=\frac{1}{n}G^TX^TXG$.

\cref{Eq:ProjOperatorPLSApp} can be rewritten in order to obtain
\begin{equation}\label{Eq:ProjOperatorPLSAss}
\frac{\rho_{\min}(\Theta)}{4}\ge d_{\delta,K}\frac{\tau^{2}}{n}\sum_{i=1}^{K}\mathrm{Tr}(\Sigma^{2i}).
\end{equation}
\begin{coro}
Assume \cref{Eq:ProjOperatorPLSAss} holds, \cref{Th:ProjOperatorGeneralized} establishes that with probability higher than $1-\delta,$
\begin{equation}\label{Eq:ProjOperatorPLSCase}
\frac{1}{n}|||P_{[XG]}-P_{[X\hat{G}]}|||^{2}\le D_{\delta}\ln(\frac{K}{\delta})\frac{\tau^{2}}{n}\mathrm{Cond}(\Theta)^{3}\frac{\sum_{i=1}^{K}\mathrm{Tr}(\Sigma^{2i})}{n\rho_{\min}(\Theta)}.
\end{equation}
\end{coro}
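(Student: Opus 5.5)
The corollary follows by specializing \cref{Th:ProjOperatorGeneralized} to the Krylov setting. The plan is to check that PLS is an instance of Scenario 4, translate Assumption \eqref{Eq:ProjOperatorGeneralizedAssum} into \eqref{Eq:ProjOperatorPLSAss}, and rewrite the resulting bound in terms of $\Theta$.

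\emph{Step 1: PLS fits Scenario 4.} Since $Y=X\beta+\varepsilon$ with $X$ deterministic, we have $\hat\sigma=\sigma+\frac1n X^T\varepsilon$. Hence $\hat\sigma\sim\mathcal N(\sigma,\frac{\tau^2}{n^2}X^TX)=\mathcal N(\sigma,\frac{\tau^2}{n}\Sigma)$. The columns of $X\hat G$ are $X\Sigma^{i-1}\hat\sigma$, so I would take the Gaussian vector of Scenario 4 to be $\hat\sigma\in\RR^p$ (rather than $X\hat\sigma$), with $A_i=X\Sigma^{i-1}\in\RR^{n\times p}$, $V=\Sigma$ and $\gamma^2=\tau^2/n$. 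Then $X\hat G=XG+E$ with $E_{\cdot,i}=\frac1n X\Sigma^{i-1}X^T\varepsilon$. The proof of \cref{Th:ProjOperatorGeneralized} only uses that each column $E_{\cdot,i}$ is centered Gaussian with covariance $\gamma^2A_iVA_i^T$; it needs a column-wise concentration plus a union bound over the $K$ columns, which is where $\ln(K/\delta)$ comes from. That argument is insensitive to $A_i$ being rectangular, so the theorem applies verbatim. Full rank of $H=XG$ is assumed.

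\emph{Step 2: translate the assumption.} Using cyclicity of the trace and $X^TX=n\Sigma$,
\[
\mathrm{Tr}(A_iVA_i^T)=\mathrm{Tr}(X\Sigma^{2i-1}X^T)=\mathrm{Tr}(\Sigma^{2i-1}X^TX)=n\,\mathrm{Tr}(\Sigma^{2i}).
\]
Also $H^TH=G^TX^TXG=n\Theta$, so $\rho_{\min}(H^TH)=n\rho_{\min}(\Theta)$ and $\mathrm{Cond}(H^TH)=\mathrm{Cond}(\Theta)$. Substituting, Assumption \eqref{Eq:ProjOperatorGeneralizedAssum} reads
\[
\tfrac n4\rho_{\min}(\Theta)>d_{\delta,K}\tfrac{\tau^2}{n}\,n\sum_i\mathrm{Tr}(\Sigma^{2i}),
\]
which is exactly \eqref{Eq:ProjOperatorPLSAss} after dividing by $n$. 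There is a strict versus non-strict inequality mismatch. I would handle it either by noting the theorem's proof only needs the non-strict version, or by slightly enlarging $b_\delta$.

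\emph{Step 3: plug into the bound.} The right-hand side of \cref{Th:ProjOperatorGeneralized} becomes
\[
\frac{\tau^2}{n}D^{(2)}_\delta\ln(K/\delta)\,\mathrm{Cond}(\Theta)^3\sum_i\frac{n\,\mathrm{Tr}(\Sigma^{2i})}{n\cdot n\rho_{\min}(\Theta)}.
\]
This equals \eqref{Eq:ProjOperatorPLSCase} with $D_\delta=D^{(2)}_\delta$.

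The only real obstacle is the bookkeeping in Step 1, namely confirming that the dimension of the Gaussian vector and the factor $1/n$ in $\hat\sigma$ give $\gamma^2V=\frac{\tau^2}{n}\Sigma$. The paper's stated choice $\hat v=X\hat\sigma$ would instead require $A_i$ to act on $X\sigma$. I would resolve this by working directly with $\hat\sigma$ as above, since only the column covariances $\frac{\tau^2}{n}X\Sigma^{2i-1}X^T$ enter the argument.
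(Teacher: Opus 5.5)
Your proposal is correct and follows the paper's route: you specialize \cref{Th:ProjOperatorGeneralized} with $\gamma^2=\tau^2/n$, $V=\Sigma$ and $A_i=X\Sigma^{i-1}$, then translate using $\mathrm{Tr}(X\Sigma^{2i-1}X^T)=n\,\mathrm{Tr}(\Sigma^{2i})$ and $H^TH=n\Theta$, which converts both the assumption and the bound exactly as the paper does. Your use of $\hat\sigma\in\RR^p$ as the underlying Gaussian vector, instead of the paper's $\hat v=X\hat\sigma$, is the dimensionally consistent way to set up Scenario 4 here; it yields the same column and cross-column covariances $\gamma^2A_lVA_m^T$, so the theorem's argument goes through unchanged.
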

This result on the subspace spanned by the PLS components is obtained thanks to the fact that we assume that $\rho_{\min}(\Theta)$ is above a certain level depending on the eigenstructure of the Gram matrix$~\Sigma$.

\paragraph*{}Note that we estimate the difference between $[H]$ and $[\hat{H}]$ using the estimation of the corresponding bases, which is not the same as considering prediction for PLS regression. Indeed, PLS regression projects the response vector $Y$ onto the Krylov basis which is dependent with a vector of the basis $\hat{\sigma}=\frac{X^TY}{n}$. Here, we refer to the estimation of the subspace spanned by the components, rather than the quadratic prediction error related to the parameter $\beta$. These two notions can be related via the following decomposition \cref{Eq:ModelLinearDecomp}.
This decomposition highlights the difference between the given bounds on the estimation of the projection operator (\cref{Eq:ProjOperatorPLSCase}) and the study of the quadratic loss of the PLS estimator (Theorem 3.1 in \textcite{Cast2}).
\paragraph*{Connection to previous work}Assumption \eqref{Eq:ProjOperatorPLSAss} can be seen as a generalization of Assumption A.2 from \textcite{Cast2} on the norm of the Krylov components. In the later, the authors make an assumption on the diagonal of $\Theta$. The main difference here lies in the fact that Assumption \eqref{Eq:ProjOperatorPLSAss} is made directly on the minimal eigenvalue $\rho_{\min}(\Theta)$ of $\Theta.$ Assumption \eqref{Eq:ProjOperatorPLSAss} constraints the subspace spanned by the Krylov components to ensure a good estimation of the inverse of the matrix $\Theta$. The bound displayed in \cref{Eq:ProjOperatorPLSCase} is composed of several quantities. First $\mathrm{Cond}(\Theta)^{3}$ measures the invertibility of the Gram matrix $\Theta$ induced by the Krylov components. The trace of powers of the Gram matrix of $\Sigma$. Finally the ratio $\frac{\tau^{2}}{n^{2}}$ related to the noise $\frac{\tau^{2}}{n}$ of each estimation of the Krylov components with the normalization factor $\frac{1}{n}$. Several connections can be made with the bound displayed in Theorem 3.1 from \textcite{Cast2}. The trace of powers of the Gram matrix $\Sigma$ are common to both bounds. Focusing on the term $|||P_{[XG]}-P_{[X\hat{G}]}|||^{2}$, we recover the rate $\frac{\tau^{2}}{n}$ from the quadratic prediction loss of Theorem 3.1 from \textcite{Cast2}, reflecting how the estimation of the projection operator is related to the prediction of $X\beta$ by $X\hat{\beta}_{K}$.

\paragraph*{Regularized version} We also state a bound for the regularized estimator of the PLS subspace thanks to \cref{Th:ProjOperatorGeneralizedPenalinise}. Let 
\begin{equation}
    \alpha=d_{\delta,K}\frac{\tau^{2}}{n}\sum_{i=1}^{K}\mathrm{Tr}(\Sigma^{2i}).
\end{equation}
We have with probability higher than $1-\delta,$
\begin{equation}\label{Eq:ProjOperatorPLSRidge}
    \frac{1}{n}|||P_{[XG]}-P_{[X\hat{G}]_{\alpha}}|||^{2}\le D_{\delta}\ln(\frac{K}{\delta})\frac{\tau^{2}}{n}\mathrm{Cond}(\Theta)^{3}\frac{\sum_{i=1}^{K}\mathrm{Tr}(\Sigma^{2i})}{n\rho_{\min}(\Theta)}.
\end{equation}
Remark that the Ridge penalization introduced here is quite different from the one introduced in \textcite{Cast2}. The difference lies in the definition of $\alpha$ which is calibrated according to each diagonal element of the matrix $\hat{\Theta}$ in Theorem 3.2 from \textcite{Cast2}. In contrast, for the bound in \cref{Eq:ProjOperatorPLSRidge}, $\alpha$ is fixed for all the Krylov components. However we can associate these two regularizations by noticing that the term $\alpha$ from \cref{Eq:ProjOperatorPLSRidge} can be related to the sum of the calibrated $\alpha_{i}$ of Theorem 3.2 in \textcite{Cast2}. This can be interpreted as a general constraint on the entire set of Krylov components simultaneously.
\section{Conclusion}
In this contribution we focused on the approximation of a subspace through projection operators estimation. We provide non-asymptotic upper bounds on the estimation error for four different scenarios. The latter are obtained thanks to a signal-to-noise condition on the Gram matrix related to the estimated subspace. Under this assumption, the displayed bounds depend on a ratio between the spectrum of the covariance matrix of the noise and the smallest eigenvalue of the Gram matrix. Each scenario can be linked to a specific case, in particular scenario 4 is applied for dimension reduction for PLS regression in a linear context.  

In order to get rid of the signal-to-noise condition on the Gram matrix, we introduce a Ridge regularization in the estimator of the projection operator. This method avoids making an assumption on the Gram matrix thanks to a parameter $\alpha$ depending only of the dimension and the covariance matrix of the noise. The obtained upper bounds for the regularized estimator are similar to those previously mentioned under the signal-to-noise condition. It would be interesting to assess the optimality of the obtained upper bounds by emphasizing on lower bounds. Considering the case of dimension 1 would be a first step to investigate.

\appendix
\section*{Appendix}
The appendix is structured as follows, Appendix \ref{Sec:AppendixA} gives all the precise upper bounds according to each scenarios. Appendix \ref{Sec:ProjOperatorPrelim} is dedicated to some specific technical results that will be used all along the proofs. Appendix \ref{Sec:ProjOperatorSkeleton} states the general sketch of proof which will be applied for each scenario. Next, we highlight and fix the quantities according to each scenario to support the proof. Then we bound the introduced terms according to each scenario to conclude the proof of the theorems (without regularization). Appendix \ref{Sec:ProjOperatorRegularizedProof} gives the general proof for the regularized versions which adapts according to the terms of each scenarios. Finally, we provide all the upper bounds according to the introduced terms of each scenarios with regularization.
\section{Precise results}\label{Sec:AppendixA}
The bound displayed for Scenario 2 in \cref{Sec:ProjOperatorMain} has been simplified for the ease of exposition. The detailed results are displayed below. We first provide the theorems related to each Scenario, then we give the bounds obtained thanks to a regularization on the Gram matrix $H^TH$. 
\subsection{Detailed results in each Scenario}
First we consider the Scenario where all entries of the matrix E are $iid$. As mentioned previously in \cref{Sec:ProjOperatorMain}, the obtained result is simply the one from \cref{Th:ProjOperatorRowsI} with $S=I_{p}$.
\begin{theorem}\label{Th:ProjOperatorIndependent}
Let $\delta\in(0,1)$. Suppose the assumptions of Scenario 1 are satisfied and assume that 
\begin{equation}\label{Eq:ProjOperatorIndependentAssum}
     \frac{1}{4}\rho_{\min}(H^TH)>d_{\delta}n\gamma^{2},
\end{equation}
with $d_{\delta}$ depending only on $\delta$. Then, with probability higher than $1-\delta$,
\begin{equation}
\frac{1}{n}|||P_{[H]}-P_{[\hat{H}]}|||^{2}\le \gamma^{2}D_{\delta}\mathrm{Cond}(H^TH)^{3} \bigg(\frac{1}{\rho_{\min}(H^TH)}\bigg).
\end{equation}
\end{theorem}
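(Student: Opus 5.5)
The plan is to treat Scenario 1 as the special case $S=I_K$ of Scenario 2 and invoke \cref{Th:ProjOperatorRowsI}, since $\rho(I_K)=1$. To keep the argument self-contained, I also sketch the direct route, which is the same skeleton used for all scenarios. Write $M=H^TH$ and $\hat M=\hat H^T\hat H=M+\Delta$ with
\[
\Delta=H^TE+E^TH+E^TE .
\]
Then decompose
\[
P_{[\hat H]}-P_{[H]}=E\hat M^{-1}\hat H^T+H\hat M^{-1}E^T+H(\hat M^{-1}-M^{-1})H^T .
\]
Each term is bounded in operator norm using $|||H|||^2=\rho(M)$, $|||\hat M^{-1}|||$ and $|||\hat M^{-1}-M^{-1}|||\le|||\hat M^{-1}|||\,|||\Delta|||\,|||M^{-1}|||$. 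Alternatively one can use the sharper identity $|||P_{[\hat H]}-P_{[H]}|||=|||P_{[H]^\perp}P_{[\hat H]}|||$, which gives $|||(I-P_{[H]})E\hat M^{-1}\hat H^T|||\le|||E|||\,|||\hat M^{-1/2}|||$. Either way, the main probabilistic input is a high-probability bound on $|||E|||$.

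First, Gaussian concentration for the operator norm of an $n\times K$ i.i.d.\ matrix gives, with probability at least $1-\delta/2$,
\[
|||E|||\le\gamma\bigl(\sqrt n+\sqrt K+\sqrt{2\ln(2/\delta)}\bigr)\le c_\delta\gamma\sqrt n ,
\]
using $K\le n$. Next, $H^TE$ is a $K\times K$ Gaussian matrix whose rows have covariance $\gamma^2 M$. Hence $|||H^TE|||\le c'_\delta\gamma\sqrt{\rho(M)}\sqrt K$, or more crudely $|||H^TE|||\le|||H|||\,|||E|||\le c_\delta\gamma\sqrt{n\rho(M)}$. Combining these,
\[
|||\Delta|||\le 2c_\delta\gamma\sqrt{n\rho(M)}+c_\delta^2\gamma^2 n .
\]

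The key step, and the expected obstacle, is guaranteeing that $\hat M$ is invertible with $\rho_{\min}(\hat M)\ge\rho_{\min}(M)/2$. By Weyl's inequality it suffices that $|||\Delta|||\le\rho_{\min}(M)/2$. The $E^TE$ part is handled by \eqref{Eq:ProjOperatorIndependentAssum} with $d_\delta\ge c_\delta^2$. The cross term $\gamma\sqrt{n\rho(M)}$, however, compares to $\rho_{\min}(M)$ only up to a factor $\sqrt{\mathrm{Cond}(M)}$. I would therefore bound the cross term through $|||H^TE|||$ directly, via the Gaussian bound $\gamma\sqrt{\rho(M)K}$ above or after whitening. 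The powers of $\mathrm{Cond}(M)$ this costs are absorbed in the final constant. Concretely, on the event $\rho_{\min}(\hat M)\ge\rho_{\min}(M)/2$, the three terms of the decomposition are respectively at most
\[
\frac{|||E|||\,(\sqrt{\rho(M)}+|||E|||)}{\rho_{\min}(M)},\qquad
\frac{\sqrt{\rho(M)}\,|||E|||}{\rho_{\min}(M)},\qquad
\frac{\rho(M)\,|||\Delta|||}{\rho_{\min}(M)^2}.
\]
Using $|||E|||\lesssim\gamma\sqrt n\lesssim\sqrt{\rho_{\min}(M)}$ from the assumption, each term is $\lesssim\mathrm{Cond}(M)^{3/2}\gamma\sqrt{n/\rho_{\min}(M)}$.

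Squaring this bound and dividing by $n$ yields
\[
\frac1n|||P_{[H]}-P_{[\hat H]}|||^2\le D_\delta\,\gamma^2\,\frac{\mathrm{Cond}(M)^3}{\rho_{\min}(M)},
\]
with probability at least $1-\delta$ after a union bound over the two concentration events. This is exactly the stated bound.
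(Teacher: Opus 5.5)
\textbf{Gap: the invertibility step.} You never establish the event $\rho_{\min}(\hat M)\ge\rho_{\min}(M)/2$ under the stated hypothesis, and the route you propose cannot establish it.

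Weyl's inequality needs $|||\Delta|||\le\rho_{\min}(M)/2$, and the cross term $H^TE+E^TH$ is genuinely too large for this. Your bound $|||H^TE|||\lesssim\gamma\sqrt{K\rho(M)}$ is below $\rho_{\min}(M)$ only when $\gamma^2K\,\mathrm{Cond}(M)\lesssim\rho_{\min}(M)$. The hypothesis only gives $\gamma^2 n\lesssim\rho_{\min}(M)/d_\delta$, so the argument breaks as soon as $K\,\mathrm{Cond}(M)\gg d_\delta n$.

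This is not an artefact of a loose bound. Take $K=2$ with orthogonal columns, $\|H_{\cdot,1}\|^2=\lambda_1\gg\lambda_2=\|H_{\cdot,2}\|^2$. The entry $H_{\cdot,1}^TE_{\cdot,2}\sim\mathcal{N}(0,\gamma^2\lambda_1)$ of $\Delta$ exceeds $\lambda_2=\rho_{\min}(M)$ with high probability once $\lambda_1\gg\lambda_2^2/\gamma^2$, even though the hypothesis involves only $\lambda_2$.

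Nor can the resulting powers of $\mathrm{Cond}(M)$ be ``absorbed in the final constant''. What is at stake is the condition under which $\hat M^{-1}$ is controlled at all, not a multiplicative factor in the final bound. Moreover, $d_\delta$ may depend only on $\delta$.

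\textbf{The fix.} Use a relative rather than an additive perturbation bound, so the cross term is absorbed by $x^TMx$ itself instead of by $\rho_{\min}(M)$. For unit $x$, $\|\hat Hx\|\ge\|Hx\|-\|Ex\|\ge\sqrt{\rho_{\min}(M)}-|||E|||$. Hence $\rho_{\min}(\hat M)\ge(\sqrt{\rho_{\min}(M)}-|||E|||)^2$.

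On your event $|||E|||\le c_\delta\gamma\sqrt n$, this is at least $\rho_{\min}(M)/4$ once $d_\delta\ge c_\delta^2$. Taking $d_\delta\ge 3c_\delta^2$ gives at least $\rho_{\min}(M)/2$. This is the mechanism behind \cref{Prop:ConsequenceAssInverse}, via $2|x^TE^THx|\le\frac14x^TMx+4x^TE^TEx$.

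Your ``whitening'' remark points the same way, since $|||M^{-1/2}\Delta M^{-1/2}|||\le2|||E|||/\sqrt{\rho_{\min}(M)}+|||E|||^2/\rho_{\min}(M)$ involves no condition number at all.

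\textbf{After the fix.} The rest of your argument goes through, and it is cleaner than the paper's. The paper bounds $\mathrm{I},\mathrm{II},\mathrm{III}$ for each fixed $u$ by a Gaussian deviation bound and then takes a supremum over $u$, which needs an additional uniformity argument. Your only probabilistic input is the uniform event on $|||E|||$.

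Your alternative identity $|||P_{[H]}-P_{[\hat H]}|||=|||P_{[H]^\perp}P_{[\hat H]}|||$ then even yields $\frac1n|||P_{[H]}-P_{[\hat H]}|||^2\le|||E|||^2/(n\rho_{\min}(\hat M))\le 4c_\delta^2\gamma^2/\rho_{\min}(M)$. This bound has no $\mathrm{Cond}(M)$ factor.
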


In section \cref{Sec:ProjOperatorMain}, \cref{Th:ProjOperatorRowsI} has been simplified for the sake of clarity, a more precise bound is displayed below. 
\begin{theorem}\label{Th:ProjOperatorRows}
Let $\delta\in(0,1)$. Suppose the assumptions of Scenario 2 are satisfied and assume that 
\begin{equation}\label{Eq:ProjOperatorRowsAssum}
     \frac{1}{4}\rho_{\min}(H^TH)>d_{\delta}n\gamma^{2}\rho(S),
\end{equation}
with $d_{\delta,K}$ with depending only on $\delta$. Then with probability higher than $1-\delta$,
\begin{multline*}
\frac{1}{n}|||P_{[H]}-P_{[\hat{H}]}|||^{2}\le D_{\delta}\gamma^{2}\mathrm{Cond}(H^TH)\\ \max\bigg(\frac{K}{n}\mathrm{Cond}(H^TH)^{2}\rho\big(S(H^TH)^{-1}\big), \rho\big(S(H^TH)^{-1}\big),\mathrm{Cond}(H^TH)^{2}\frac{\mathrm{Tr}(S)}{n\rho_{\min}(H^TH)}\bigg).
\end{multline*}
\end{theorem}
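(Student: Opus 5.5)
We decompose the difference of the two projections algebraically and then control each random piece with a Gaussian concentration bound. Write $M=H^TH$ and $\hat M=\hat H^T\hat H=M+\Delta$, with
\[
\Delta=H^TE+E^TH+E^TE .
\]
Then
\[
P_{[\hat H]}-P_{[H]}=\hat H\hat M^{-1}\hat H^T-HM^{-1}H^T,
\]
and expanding $\hat H=H+E$ gives the decomposition
\[
P_{[\hat H]}-P_{[H]}=H(\hat M^{-1}-M^{-1})H^T+E\hat M^{-1}H^T+H\hat M^{-1}E^T+E\hat M^{-1}E^T .
\]
We will bound each term in operator norm. For the first one we use the resolvent identity $\hat M^{-1}-M^{-1}=-\hat M^{-1}\Delta M^{-1}$. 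This yields
\[
|||H(\hat M^{-1}-M^{-1})H^T|||\le \rho(M)\,|||\hat M^{-1}|||\,|||M^{-1}|||\,|||\Delta|||
\]
and the other three terms are bounded similarly through $|||E|||$, $|||H|||=\rho(M)^{1/2}$ and $|||\hat M^{-1}|||$. The factors $\rho(M)/\rho_{\min}(M)$ accumulating here produce the powers of $\mathrm{Cond}(H^TH)$ in the statement.

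\textbf{Step 1: invertibility of $\hat M$.} We show that on a high-probability event $|||\hat M^{-1}|||\le 2/\rho_{\min}(M)$. By Weyl's inequality it suffices that $|||\Delta|||\le \rho_{\min}(M)/2$. The pieces of $\Delta$ are controlled as follows.
\begin{itemize}
\item In Scenario 2, $E=ZS^{1/2}\gamma$ with $Z$ an $n\times K$ standard Gaussian matrix.
\item For $E^TE=\gamma^2S^{1/2}Z^TZS^{1/2}$, the Davidson--Szarek (or Laurent--Massart) bounds on $|||Z|||$ give $|||E^TE|||\le c_\delta\gamma^2\rho(S)(\sqrt n+\sqrt K)^2$, which is of order $n\gamma^2\rho(S)$.
\item For the cross term $H^TE$, the columns of $H^TZ$ are Gaussian with covariance $M$, so $|||H^TE|||\lesssim \gamma\sqrt{\rho(S)\rho(M)}$ up to a factor $\sqrt K$. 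A sharper bound, $|||M^{-1/2}H^TE|||\lesssim\gamma\sqrt{\mathrm{Tr}(S)}$, is available if needed.
\end{itemize}
Assumption \eqref{Eq:ProjOperatorRowsAssum}, $\rho_{\min}(M)>4d_\delta n\gamma^2\rho(S)$, then absorbs both the quadratic and the cross contributions into $\rho_{\min}(M)/2$, with $d_\delta$ chosen from the concentration constants. This fixes $d_\delta$.

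\textbf{Step 2: bounding the four terms.} On the same event, each term of the decomposition is bounded using the estimates from Step 1. The $\mathrm{Tr}(S)$ quantity comes from $\mathbb{E}\|E\|_F^2=n\gamma^2\mathrm{Tr}(S)$ when the cross term is controlled via the Frobenius norm. The $\rho(S(H^TH)^{-1})$ quantity arises from $|||M^{-1/2}H^TE\,M^{-1/2}|||$, since $H^TE\,M^{-1}E^TH$ has expectation of order $\gamma^2\mathrm{Tr}(SM^{-1})M$. The $K/n$ factor appears when $E^TE$ is compared with $n\gamma^2S$.

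\textbf{Step 3: assembling the bound.} We square the sum of the four terms, use $(a+b+c+d)^2\le4(a^2+b^2+c^2+d^2)$, divide by $n$, and take the maximum of the three types of contributions. A union bound over the $O(1)$ concentration events gives probability at least $1-\delta$.

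\textbf{Main obstacle.} The delicate part is the cross term $H^TE$ together with $E\hat M^{-1}H^T$. Bounding these crudely by $|||H|||\,|||E|||$ loses the correct dependence and produces unnecessary condition-number factors. To obtain the $\rho(S(H^TH)^{-1})$ term, we will whiten by writing $H=U M^{1/2}$ with $U$ orthonormal, so that $U^TE$ is a $K\times K$ Gaussian matrix with rows of covariance $\gamma^2S$. We then apply a Gaussian operator-norm bound to $U^TE\,M^{-1/2}$. Tracking the constants so that they depend only on $\delta$ completes the argument.
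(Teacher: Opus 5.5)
Your proposal has a genuine gap in Step 1. In addition, Step 2 as sketched would not yield the stated dependence on $\mathrm{Cond}(H^TH)$.

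\textbf{Step 1.} You apply Weyl's inequality to $\hat M=M+\Delta$, which needs $|||\Delta|||\le\rho_{\min}(M)/2$. But the cross term really is of size $|||H^TE|||\asymp\gamma\sqrt{\rho(M)\rho(S)}$: with $H=UM^{1/2}$ one has $H^TE=\gamma M^{1/2}(U^TZ)S^{1/2}$, where $U^TZ$ is a $K\times K$ standard Gaussian matrix. Absorbing this term requires $\gamma^2\rho(S)\rho(M)\lesssim\rho_{\min}(M)^2$. The hypothesis only gives $\gamma^2\rho(S)<\rho_{\min}(M)/(4d_\delta n)$, so you would also need $\mathrm{Cond}(H^TH)\lesssim n$, which is not assumed. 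For a counterexample, take $K=2$, $M=\mathrm{diag}(L,\ell)$ with $\ell$ just above the threshold and $L\gg n\ell$; then $|||\Delta|||\gg\rho_{\min}(M)$ while $\hat M$ is still well conditioned relative to $M$. The perturbation has to be measured relative to $M$. The paper does this by lower bounding the quadratic form $x^T\hat H^T\hat Hx$ with $2|x^TE^THx|\le\frac14x^TH^THx+4x^TE^TEx$, so only $\rho(E^TE)\lesssim n\gamma^2\rho(S)$ is compared with $\rho_{\min}(H^TH)$. Equivalently, you can use Weyl for singular values, $s_{\min}(\hat H)\ge s_{\min}(H)-|||E|||$.

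\textbf{Step 2.} This step lists where the quantities should come from rather than deriving them, and the crude bounds you do state are too weak. Take the $E^TE$ part of $\Delta$ in your first term. The bound $\rho(M)\,|||\hat M^{-1}|||\,|||M^{-1}|||\,|||E^TE|||$, squared and divided by $n$, gives
\[
4\,\mathrm{Cond}(H^TH)^2\,\frac{n\gamma^2\rho(S)}{\rho_{\min}(M)}\cdot\frac{\gamma^2\rho(S)}{\rho_{\min}(M)},
\]
which is an $n$-free term. The only $n$-free term in the theorem is $\gamma^2\mathrm{Cond}(H^TH)\rho(S(H^TH)^{-1})$, and $\rho(S)/\rho_{\min}(M)$ can be as much as $\mathrm{Cond}(H^TH)$ times larger than $\rho(S(H^TH)^{-1})$. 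Concretely, take $S=\mathrm{diag}(1,\epsilon)$, $M=\mathrm{diag}(L,\ell)$, $\epsilon\le\ell/L$, $\ell=5d_\delta n\gamma^2$ and $L/\ell=n^{1/3}$. Then your bound exceeds the right-hand side of the theorem by a factor of order $n^{2/3}$.

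\textbf{How to repair it.} The whitening you reserve for the cross term is needed throughout. Set $B=E(H^TH)^{-1/2}$. Then $|||B|||^2\le\gamma^2|||Z|||^2\rho(S(H^TH)^{-1})\le C_\delta n\gamma^2\rho(S(H^TH)^{-1})$. Moreover $M^{-1/2}\Delta M^{-1/2}=U^TB+B^TU+B^TB$, which is small under the hypothesis, and this also repairs Step 1. In fact the four-term expansion then becomes unnecessary. On the same event $\dim[\hat H]=\dim[H]=K$, so $|||P_{[H]}-P_{[\hat H]}|||=|||P_{[\hat H]}^{\perp}P_{[H]}|||$. Since $P_{[\hat H]}^{\perp}H=-P_{[\hat H]}^{\perp}E$, this gives $|||P_{[H]}-P_{[\hat H]}|||\le|||E(H^TH)^{-1}H^T|||=|||B|||$. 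Hence $\frac1n|||P_{[H]}-P_{[\hat H]}|||^2\le C_\delta\gamma^2\rho(S(H^TH)^{-1})$, which is dominated by the stated bound.

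Working with operator norms, as you propose, also avoids a weakness in the paper's route. The paper bounds $\mathrm{I},\mathrm{II},\mathrm{III}$ for a fixed $u$ and only then takes the supremum, which is not uniform in $u$. For instance, $\sup_{\|u\|=1}\mathrm{III}=|||B|||^2$ is of order $n\gamma^2\rho(S(H^TH)^{-1})$ when $n\gg K$, not $\gamma^2\mathrm{Tr}(S)/\rho_{\min}(H^TH)$.
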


In Scenario 3, considering a distribution on the columns of the matrix $E$ is quite similar to Scenario~2, we provide a detailed upper bound and a simplified one for the ease of exposition.
\begin{theorem}\label{Th:ProjOperatorCollumns}
Let $\delta\in(0,1)$. Suppose the assumptions of Scenario 3 are satisfied and assume that 
\begin{equation}\label{Eq:ProjOperatorColumnsAssum}
     \frac{1}{4}\rho_{\min}(H^TH)>d_{\delta}n\gamma^{2}\rho(A),
\end{equation}
with $d_{\delta}$ depending only on $\delta$. Then with probability higher than $1-\delta$,
\begin{multline*}
\frac{1}{n}|||P_{[H]}-P_{[\hat{H}]}|||^{2}\le \gamma^{2}D_{\delta}\mathrm{Cond}(H^TH)^{2}\\ \max\bigg(\frac{\max\big(K\rho(A),\mathrm{Tr}(A)\big)}{n\rho_{\min}(H^TH)},\mathrm{Cond}(H^TH)\frac{\rho(A)}{n}\mathrm{Tr}\big((H^TH)^{-1}\big)\bigg).
\end{multline*}
\end{theorem}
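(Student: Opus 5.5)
The strategy is the one behind \cref{Th:ProjOperatorRows}, adapted to Scenario 3. Write $M=H^TH$ and $\hat M=\hat H^T\hat H = M+\Delta$, with
\[
\Delta = H^TE+E^TH+E^TE .
\]
The algebraic decomposition is
\[
P_{[\hat H]}-P_{[H]} = E\hat M^{-1}\hat H^T + H\hat M^{-1}E^T + H(\hat M^{-1}-M^{-1})H^T ,
\]
and the resolvent identity gives $\hat M^{-1}-M^{-1}=-\hat M^{-1}\Delta M^{-1}$. The operator norm is then at most a sum of products of the norms of $E$, $H$, $\hat M^{-1}$, $M^{-1}$ and $\Delta$. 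Since $|||H|||^2=\rho(M)$, every factor $\rho(M)/\rho_{\min}(M)$ that appears gives a power of $\mathrm{Cond}(H^TH)$. The aim is to reduce everything to three random quantities:
\begin{itemize}
\item $|||E|||$ (or $|||E M^{-1/2}|||$);
\item $|||H^TE|||$, more precisely $|||M^{-1/2}H^TE|||$;
\item $|||E^TE|||$.
\end{itemize}

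\textbf{Step 1: invertibility.} Show that on a high-probability event $\rho_{\min}(\hat M)\ge \rho_{\min}(M)/2$, using Weyl's inequality $\rho_{\min}(\hat M)\ge \rho_{\min}(M)-|||H^TE+E^TH|||$ (because $E^TE\succeq 0$). This is where Assumption \eqref{Eq:ProjOperatorColumnsAssum} enters.

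\textbf{Step 2: concentration in Scenario 3.} Write $E=\gamma A^{1/2}Z$ with $Z\in\RR^{n\times K}$ having i.i.d.\ standard Gaussian entries.
\begin{itemize}
\item $H^TE=\gamma H^TA^{1/2}Z$ has independent Gaussian columns with covariance $\gamma^2H^TAH\preceq \gamma^2\rho(A)M$. After whitening, $M^{-1/2}H^TE$ is a $K\times K$ Gaussian matrix with entry scale $\gamma\sqrt{\rho(A)}$. Its norm is therefore $\lesssim \gamma\sqrt{\rho(A)}(\sqrt K+\sqrt{\ln(1/\delta)})$, using Gordon's bound or $\chi^2$/Hanson–Wright concentration.
\item $E^TE=\gamma^2Z^TAZ$ has diagonal entries of mean $\gamma^2\mathrm{Tr}(A)$. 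Hanson–Wright gives $|||E^TE|||\lesssim\gamma^2\max(K\rho(A),\mathrm{Tr}(A))$ up to $\delta$-dependent constants. This is exactly the first term of the maximum in the statement.
\end{itemize}
Combining with Step 1, the assumption $\rho_{\min}(M)>4d_\delta n\gamma^2\rho(A)$ controls the cross term. Note that $\mathrm{Tr}(A)\le n\rho(A)$, and $K\le n$ holds whenever $H$ has full rank $K$, so the assumption covers both.

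\textbf{Step 3: assembling the terms.}
\begin{itemize}
\item The terms $E\hat M^{-1}\hat H^T$ and $H\hat M^{-1}E^T$ are bounded by $|||E|||\,|||\hat H|||/\rho_{\min}(\hat M)$. Using $|||E|||^2=|||E^TE|||$, squaring and dividing by $n$ gives $\gamma^2\,\mathrm{Cond}\cdot\max(K\rho(A),\mathrm{Tr}(A))/(n\rho_{\min}(M))$, with the remaining Cond powers coming from $|||\hat H|||^2\le 2\rho(M)+2|||E|||^2$.
\item The resolvent term $H\hat M^{-1}\Delta M^{-1}H^T$ splits by the pieces of $\Delta$. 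The $E^TE$ piece reproduces the first term of the maximum. The $H^TE$ pieces are handled in the whitened form: $\|M^{-1/2}H^TE\|^2$ is bounded through its Frobenius norm, which concentrates around $\gamma^2\,\mathrm{Tr}(M^{-1}H^TAH)$. Bounding $H^TAH\preceq\rho(A)\,\rho(M)\,I$ turns this into $\gamma^2\rho(A)\rho(M)\mathrm{Tr}(M^{-1})$, and the extra factor $\rho(M)$ is what produces the $\mathrm{Cond}(H^TH)\,\frac{\rho(A)}{n}\mathrm{Tr}\big((H^TH)^{-1}\big)$ term.
\end{itemize}
Collecting powers of the condition number gives the prefactor $\mathrm{Cond}(H^TH)^2$. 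A union bound over the two or three concentration events, each with probability $\delta/3$, finishes the argument.

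\textbf{Main obstacle.} The hardest part is the bookkeeping in the resolvent term. One has to place the whitening $M^{-1/2}$ so that exactly two (respectively three) factors of the condition number appear rather than more, and use the trace form $\mathrm{Tr}(M^{-1})$ instead of the cruder $K/\rho_{\min}$. My first attempt will be to factor
\[
H\hat M^{-1}\Delta M^{-1}H^T = (HM^{-1/2})(M^{1/2}\hat M^{-1}M^{1/2})(M^{-1/2}\Delta M^{-1/2})(M^{-1/2}H^T),
\]
where the outer factors have norm $1$ and the middle factor has norm at most $2\,\mathrm{Cond}$. Everything then reduces to the whitened perturbation $M^{-1/2}\Delta M^{-1/2}$.
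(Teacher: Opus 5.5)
\textbf{Comparison with the paper's route.} Your overall architecture (exact three-term decomposition, resolvent identity, operator norms of random matrices) is genuinely different from the paper's, and once repaired it works. The paper does the following:
\begin{itemize}
\item It fixes a unit vector $u$ and reduces $\|(P_{[H]}-P_{[\hat H]})u\|^2$ to $\mathrm{I}+\mathrm{Cond}(H^TH)\rho_{\min}(H^TH)^{-1}\mathrm{II}+\mathrm{Cond}(H^TH)^3\mathrm{III}$ via \cref{Th:ProjOperatorSkeletonProof}, on the event $\rho(E^TE)\le\tilde D_\delta n\gamma^2\rho(A)$.
\item It bounds each term by the Gaussian deviation inequality for that fixed $u$. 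For instance $E\Lambda_u\sim\mathcal N(0,\gamma^2\|\Lambda_u\|^2A)$, and $\mathrm{III}\le C_\delta\gamma^2u^TAu\,\mathrm{Tr}((H^TH)^{-1})$; the latter is the only source of the $\mathrm{Cond}(H^TH)\rho(A)\mathrm{Tr}((H^TH)^{-1})/n$ entry.
\item It then takes the supremum over $u$.
\end{itemize}
Your matrix-level bounds are uniform in $u$ by construction, which is a real advantage: a fixed-$u$ deviation bound followed by a supremum is not a uniform bound without a net argument. The paper's per-$u$ skeleton, for its part, is reusable across all four scenarios with only covariance computations. Once repaired, your route gives only the first entry of the maximum. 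In fact $|||P_{[H]}-P_{[\hat H]}|||=|||(I-P_{[H]})E\hat M^{-1}\hat H^T|||\le|||E|||/s_{\min}(\hat H)$, so no condition-number factor is needed at all. Two steps, however, fail as written.

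\textbf{Step 1 does not close.} Weyl's inequality with $|||H^TE+E^TH|||$ is not controlled by the hypothesis. The best bounds you have are $|||H^TE|||\le\sqrt{\rho(M)}\,|||E|||$, or $|||H^TE|||\le\sqrt{\rho(M)}\,|||M^{-1/2}H^TE|||\lesssim\gamma\sqrt{K\rho(A)\rho(M)}$. Making this at most $\rho_{\min}(M)/4$ under $\rho_{\min}(M)>4d_\delta n\gamma^2\rho(A)$ additionally requires $K\,\mathrm{Cond}(H^TH)\lesssim n$, and this extra requirement is genuinely needed for that argument, since the bound on $|||H^TE|||$ is essentially attained. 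Dropping $E^TE$ and bounding the cross term in operator norm loses a factor $\sqrt{\mathrm{Cond}(H^TH)}$.

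Use instead $s_{\min}(\hat H)\ge s_{\min}(H)-|||E|||$ (or the AM--GM bound $\hat M\succeq\tfrac12M-E^TE$), together with your estimate $|||E|||^2\lesssim\gamma^2\max(K\rho(A),\mathrm{Tr}(A))\le\gamma^2n\rho(A)$. For $d_\delta$ large this gives $|||E|||\le s_{\min}(H)/4$. Hence $\|\hat Hy\|\ge\tfrac34\|Hy\|$ for all $y$, i.e.\ $\hat M\succeq\tfrac{9}{16}M$. As a bonus, your middle factor then satisfies $|||M^{1/2}\hat M^{-1}M^{1/2}|||\le\tfrac{16}{9}$ rather than $2\,\mathrm{Cond}(H^TH)$.

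\textbf{The Frobenius detour in Step 3 is miscounted and unnecessary.} The matrix $M^{-1/2}H^TE$ has $K$ i.i.d.\ columns with covariance $\gamma^2M^{-1/2}H^TAHM^{-1/2}$. So $\EE\|M^{-1/2}H^TE\|_F^2=K\gamma^2\mathrm{Tr}(M^{-1}H^TAH)$, not $\gamma^2\mathrm{Tr}(M^{-1}H^TAH)$.
\begin{itemize}
\item With the factor $K$ restored, your chain yields a contribution of order $\gamma^2\mathrm{Cond}(H^TH)^3K\rho(A)\mathrm{Tr}((H^TH)^{-1})/n$.
\item The statement does not cover this. For $H^TH=\lambda I_K$ and $A=I_n$ it equals $\gamma^2K^2/(n\lambda)$, while the right-hand side is of order $\gamma^2/\lambda$.
\end{itemize}
Use the operator-norm bound you already derived in Step 2, $|||M^{-1/2}H^TE|||^2\lesssim\gamma^2\rho(A)(K+\ln(1/\delta))$. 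Alternatively, simply note $|||M^{-1/2}H^TE|||\le|||E|||$, since $HM^{-1/2}$ has orthonormal columns. The $H^TE$ and $E^TH$ pieces then contribute at most $\gamma^2\mathrm{Cond}(H^TH)^2K\rho(A)/(n\rho_{\min}(H^TH))$ up to constants, which lies inside the first entry of the maximum.

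Your ``main obstacle'' therefore rests on a misreading. The statement is an upper bound by a maximum, so nothing forces you to reproduce the $\mathrm{Tr}((H^TH)^{-1})$ entry. In the paper that entry comes only from the fixed-$u$ bound on $\mathrm{III}$, not from the resolvent term.
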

Assumption \eqref{Eq:ProjOperatorColumnsAssum} for Scenario 3 is the same as \eqref{Eq:ProjOperatorRowsAssum} in Scenario 2. The displayed upper bound is also related to the spectrum of the covariance matrix $A$. As for \cref{Th:ProjOperatorRows} the bound can be simplified.  
\begin{coro}\label{Cor:ProjOperatorColumns}
Under the same assumption,
\begin{equation*}
    \frac{1}{n}|||P_{[\hat{H}]}-P_{[H]}|||^{2}\le \gamma^{2}D_{\delta}\mathrm{Cond}(H^TH)^{3}\max\bigg(\frac{K}{n}\frac{\rho(A)}{\rho_{\min}(H^TH)},\frac{\mathrm{Tr}(A)}{n\rho_{\min}(H^TH)}\bigg).
\end{equation*}
\end{coro}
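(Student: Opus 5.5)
The corollary should follow from \cref{Th:ProjOperatorCollumns} by bounding each term inside the maximum with elementary spectral inequalities, so that everything factors as $\mathrm{Cond}(H^TH)^{3}$ times the simplified maximum. The hypotheses are identical, including assumption \eqref{Eq:ProjOperatorColumnsAssum}, and so is the probability $1-\delta$. Hence I would work on the event where the bound of \cref{Th:ProjOperatorCollumns} holds and only manipulate its right-hand side deterministically.

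First I treat the trace term. Since $(H^TH)^{-1}$ is a $K\times K$ positive definite matrix whose largest eigenvalue is $1/\rho_{\min}(H^TH)$, we have
\[
\mathrm{Tr}\big((H^TH)^{-1}\big)\le \frac{K}{\rho_{\min}(H^TH)}.
\]
Substituting this gives
\[
\mathrm{Cond}(H^TH)\frac{\rho(A)}{n}\mathrm{Tr}\big((H^TH)^{-1}\big)\le \mathrm{Cond}(H^TH)\,\frac{K\rho(A)}{n\rho_{\min}(H^TH)}.
\]

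Next I treat the first term in the maximum. Using $\max(a,b)$ for nonnegative $a,b$ and $\mathrm{Cond}(H^TH)\ge 1$, it is bounded by
\[
\mathrm{Cond}(H^TH)\max\Big(\frac{K\rho(A)}{n\rho_{\min}(H^TH)},\frac{\mathrm{Tr}(A)}{n\rho_{\min}(H^TH)}\Big).
\]
Both terms of the original maximum are now at most $\mathrm{Cond}(H^TH)$ times the maximum appearing in the corollary, because $\max(c\,x,c\,y)=c\max(x,y)$ for $c\ge1$. Multiplying by the prefactor $\gamma^{2}D_{\delta}\mathrm{Cond}(H^TH)^{2}$ of \cref{Th:ProjOperatorCollumns} yields the factor $\mathrm{Cond}(H^TH)^{3}$ and gives exactly the displayed bound, with the same constant $D_{\delta}$.

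There is no real obstacle here; the argument is a deterministic simplification. The only care needed is to keep the inequalities in the right direction, which requires $\mathrm{Cond}\ge1$ and positivity of all the quantities. It is also worth noting that the factor $\mathrm{Cond}(H^TH)$ could be dropped from the first term, so the stated corollary is slightly loose there.
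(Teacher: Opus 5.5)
Your proposal is correct and follows the paper's implicit route: the paper gives no separate proof and just says the bound of \cref{Th:ProjOperatorCollumns} "can be simplified", which is exactly your use of $\mathrm{Tr}\big((H^TH)^{-1}\big)\le K/\rho_{\min}(H^TH)$ and $\mathrm{Cond}(H^TH)\ge 1$ on the same event and under the same assumption. Your side remark is also accurate: the $\mathrm{Tr}(A)$ term only needs the factor $\mathrm{Cond}(H^TH)^{2}$, so the corollary as stated is slightly loose there.
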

This bound represents a ratio between the noise relative to the eigenvalues of $A$ and the "signal" in $H$ expressed by $\rho_{\min}(H^TH)$.

\subsection{Detailed results with Ridge regularization}
The assumption on the spectrum of the Gram matrix $H^TH$ can be removed in each Scenario thanks to a Ridge penalization. The regularization parameter is directly related to each assumption in order to ensure a good estimate of the projection operator. 
\begin{theorem}\label{Th:ProjOperatorIndependentPenalinise}
Let $\delta\in (0,1)$. Suppose the assumptions of Scenario 1 are satisfied and set 
\begin{equation}
\alpha=2d_{\delta}\gamma^{2}n.
\end{equation}
Then with probability larger than $1-\delta$,
\begin{equation}
    \frac{1}{n}|||P_{[H]}-P_{[H]_{\alpha}}|||^{2}\le \gamma^{2}D_{\delta}\left(\mathrm{Cond}(H^TH)^{3}\frac{1}{\rho_{\min}(H^TH)}\right).
\end{equation}
\end{theorem}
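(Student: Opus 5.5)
The plan is to treat Theorem \ref{Th:ProjOperatorIndependentPenalinise} as the special case $S=I_K$ of the regularized Scenario 2 result (Theorem \ref{Th:ProjOperatorRowsPenaliniseI}), but to write out the argument directly so that the role of $\alpha$ is visible. Write $M=H^TH$, $\hat M=\hat H^T\hat H=M+\Delta$ with $\Delta=H^TE+E^TH+E^TE$, and $\hat M_\alpha=\hat M+\alpha I_K$. We decompose
\begin{align*}
P_{[H]}-P_{[\hat H]_\alpha}&=H M^{-1}H^T-\hat H\hat M_\alpha^{-1}\hat H^T\\
&=-E\hat M_\alpha^{-1}\hat H^T-H\hat M_\alpha^{-1}E^T+H\big(M^{-1}-\hat M_\alpha^{-1}\big)H^T .
\end{align*}
The operator norm of $\frac1n$ times the square of each term is then bounded separately, and the three bounds are combined with $(a+b+c)^2\le 3(a^2+b^2+c^2)$.

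The first step is a high-probability event $\Omega_\delta$ of probability at least $1-\delta$ on which the Gaussian noise is controlled. We use the standard Gaussian matrix bound $|||E|||\le \gamma(\sqrt n+\sqrt K+\sqrt{2\ln(2/\delta)})\le \sqrt{d_\delta n}\,\gamma$, together with $|||E^TH M^{-1/2}|||\le \gamma(\sqrt K+\sqrt{2\ln(2/\delta)})$. The latter holds because $E^THM^{-1/2}$ is a $K\times K$ matrix with i.i.d.\ $\mathcal N(0,\gamma^2)$ entries, since $HM^{-1/2}$ has orthonormal columns. These are presumably the technical results collected in Appendix \ref{Sec:ProjOperatorPrelim}. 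On $\Omega_\delta$ we have $|||E^TE|||\le d_\delta n\gamma^2=\alpha/2$.

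The key regularization step, and the main obstacle, is to lower bound $\rho_{\min}(\hat M_\alpha)$ without assuming \eqref{Eq:ProjOperatorIndependentAssum}. Since $\hat H^T\hat H\succeq 0$, we always have $\rho_{\min}(\hat M_\alpha)\ge\alpha$. We also want $\hat M_\alpha\succeq c\,M$, so that the bound scales with $\rho_{\min}(M)$. For this we write $\hat M=(H+E)^T(H+E)\succeq (1-\eta)M-(\eta^{-1}-1)E^TE$ for any $\eta\in(0,1)$. Choosing $\eta=1/2$ gives $\hat M_\alpha\succeq \frac12 M+\alpha I-E^TE\succeq\frac12 M+\frac{\alpha}{2}I$ on $\Omega_\delta$, which is exactly why $\alpha=2d_\delta\gamma^2 n$. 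Hence $|||\hat M_\alpha^{-1}|||\le 2/(\rho_{\min}(M)+\alpha)$ and $|||M^{1/2}\hat M_\alpha^{-1}M^{1/2}|||\le 2$.

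With these bounds, the first term gives $|||E\hat M_\alpha^{-1}\hat H^T|||\le |||E|||\,|||\hat M_\alpha^{-1/2}|||$, using $|||\hat M_\alpha^{-1/2}\hat H^T|||\le1$. Its square over $n$ is therefore at most $2d_\delta\gamma^2/\rho_{\min}(M)$. The second term is handled by factoring $H=HM^{-1/2}M^{1/2}$, which gives $|||H\hat M_\alpha^{-1}E^T|||\le |||M^{1/2}\hat M_\alpha^{-1/2}|||\,|||\hat M_\alpha^{-1/2}|||\,|||E|||$, with the same order of bound. For the third term we use the resolvent identity $M^{-1}-\hat M_\alpha^{-1}=\hat M_\alpha^{-1}(\Delta+\alpha I)M^{-1}$, so that
\[
|||H(M^{-1}-\hat M_\alpha^{-1})H^T|||\le |||M^{1/2}\hat M_\alpha^{-1}(\Delta+\alpha I)M^{-1/2}|||.
\]
Here the cross terms $H^TE$ are handled with the orthonormal-column bound, and $E^TE$ and $\alpha I$ are bounded by $\alpha$. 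Each factor $M^{\pm1/2}$ that cannot be absorbed costs $\mathrm{Cond}(M)^{1/2}$. The worst such term is of order $\alpha\,\rho(M)^{1/2}/(\rho_{\min}(M)^{1/2}(\rho_{\min}(M)+\alpha))$, and its square over $n$ is at most $\mathrm{Cond}(M)\,\alpha/(n\rho_{\min}(M))$. This is of order $d_\delta\gamma^2\,\mathrm{Cond}(M)/\rho_{\min}(M)$. Tracking powers of the condition number crudely, as in Theorem \ref{Th:ProjOperatorIndependent}, yields at most $\mathrm{Cond}(M)^3$, and this concludes the proof with $D_\delta$ a multiple of $d_\delta$.
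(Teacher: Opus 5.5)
Your proof is correct, and it takes a genuinely different route from the paper. You also rightly read $P_{[H]_\alpha}$ in the statement as the estimator $\hat H(\hat H^T\hat H+\alpha I_K)^{-1}\hat H^T$, which is what the paper's proof actually bounds.

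\textbf{The paper's route.} It first splits off the deterministic bias $|||P_{[H]}-P_{[H]_\alpha}|||^2\le\alpha/\rho_{\min}(H^TH)$. It then bounds $\|(P_{[H]_\alpha}-P_{[\hat H]_\alpha})u\|^2$ for a fixed unit $u$ through the template of \cref{Th:ProjOperatorSkeletonProofRegularized}: the terms $T_{1,\alpha},T_{2,\alpha},\tilde{\mathrm{III}}_\alpha$ are reduced to $\mathrm{I}_\alpha,\mathrm{II}_\alpha,\mathrm{III}_\alpha$ via \cref{Prop:EigenControl}. Each of these is controlled with the $\chi^2$ deviation inequality of \cref{Prop:DeviationResult}, applied to Gaussian vectors such as $E\Lambda_{u,\alpha}$, and only then is the supremum over $u$ taken. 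Crude steps such as $\rho\big((\hat H^T\hat H)_\alpha-(H^TH)_\alpha\big)\le 2(\alpha+\rho(H^TH))$ are where $\mathrm{Cond}(H^TH)^3$ comes from.

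\textbf{Your route.} You stay at the operator level. The bias is absorbed into the resolvent identity through $\Delta+\alpha I$, the Loewner bound $\hat M_\alpha\succeq\frac12(M+\alpha I)$ replaces \cref{Prop:EigenControl}, and the only random input is the event $|||E|||^2\le d_\delta n\gamma^2=\alpha/2$.

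\textbf{What your route buys.}
\begin{itemize}
\item Uniformity in $u$ is automatic, because everything after that event is deterministic. The paper's ``fixed-$u$ bound, then $\sup_u$'' is not justified as written, since the exceptional set depends on $u$. For instance, the bound $\mathrm{III}_\alpha\lesssim\gamma^2\mathrm{Tr}((H^TH)^{-1})$ cannot hold uniformly: $\sup_u\mathrm{III}_\alpha\ge\rho(E^TE)/(\rho(H^TH)+\alpha)$, which is typically of order $n\gamma^2/\rho(H^TH)$.
\item Your event controls $\rho(E^TE)$ directly. The paper's $\mathcal{B}_\delta$ is phrased through $\mathrm{Tr}(E^TE)\approx nK\gamma^2$, which is not below $d_\delta n\gamma^2$ when $d_\delta$ is free of $K$.
\item Your argument is sharper than you claim. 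Absorb $M^{1/2}$ through $|||M^{1/2}\hat M_\alpha^{-1/2}|||\le\sqrt2$ in every piece, including the $E^TE+\alpha I$ piece, which then contributes $O(\alpha/\rho_{\min}(M))$ after squaring. All terms are then $O(\alpha/\rho_{\min}(M))$, no condition number is needed, and $\mathrm{Cond}(M)^3$ is a mere weakening.
\end{itemize}

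\textbf{What the paper's route buys.} It is a single scheme reused across Scenarios 1--4. The noise covariance enters through fixed-$u$ variance computations and yields the finer quantities of the detailed theorems, such as $\rho(S(H^TH)^{-1})$, $\mathrm{Tr}(S)$ and $\sum_j\mathrm{Tr}(A_jVA_j^T)$. For the present statement those refinements play no role.

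\textbf{Small repairs.}
\begin{itemize}
\item $E^THM^{-1/2}$ is a $K\times K$ Gaussian matrix, so its norm bound is $\gamma(2\sqrt K+t)$, not $\gamma(\sqrt K+t)$.
\item More simply, $|||(HM^{-1/2})^TE|||\le|||E|||$ already suffices. Then only a single event is used and no splitting of $\delta$ is needed.
\item The bound $|||E|||\le\sqrt{d_\delta n}\,\gamma$ with $d_\delta$ free of $K$ uses $K\le n$, which holds because $H$ has full column rank.
\end{itemize}
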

\cref{Th:ProjOperatorIndependentPenalinise} can easily be generalized to Scenario 2 by adding the dependence on each row of the matrix $E$.
\begin{theorem}\label{Th:ProjOperatorRowsPenalinise}
Let $\delta\in (0,1)$. Suppose the assumptions of Scenario 2 are satisfied and set 
\begin{equation}
\alpha=2d_{\delta}n\gamma^{2}\rho(S).
\end{equation}
Then with probability larger than $1-\delta$,
\begin{equation}
    \frac{1}{n}|||P_{[H]}-P_{[H]_{\alpha}}|||^{2}\le\gamma^{2} D_{\delta}\max\left(\mathrm{Cond}(H^TH)^{3}\frac{\rho(S)}{\rho_{\min}(H^TH)},\frac{\mathrm{Tr}(S)}{n\rho_{\min}(H^TH)}\right).
\end{equation}
\end{theorem}
\cref{Th:ProjOperatorRowsPenalinise} provides a similar upper bound to \cref{Th:ProjOperatorRows} in the context of Scenario 2. The parameter $\alpha$ is set depending of the variance term $S$. The upper bound can also be simplified for the sake of clarity.
\begin{coro}
Let $\delta\in(0,1)$. Suppose $H$ full rank and define $$\alpha=2d_{\delta}n\gamma^{2}\rho(S).$$ Then, we have with probability higher than $1-\delta$,
\begin{equation}
    \frac{1}{n}|||P_{[H]}-P_{[H]_{\alpha}}|||^{2}\le D_{\delta}\gamma^{2}\mathrm{Cond}(H^TH)^{3}\frac{\rho(S)}{\rho_{\min}(H^TH)}.
\end{equation}
\end{coro}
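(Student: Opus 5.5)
The corollary should follow directly from \cref{Th:ProjOperatorRowsPenalinise}. The only work is to show that the second term inside the maximum is dominated by the first, up to a constant depending only on $\delta$. I would not redo the probabilistic argument. I would work on the same event of probability at least $1-\delta$ on which \cref{Th:ProjOperatorRowsPenalinise} holds, with the same choice $\alpha=2d_{\delta}n\gamma^{2}\rho(S)$. The only assumption needed there is that $H$ has full rank, and this is exactly the hypothesis of the corollary.

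The key step is an elementary comparison of the two terms. Since $S$ is a positive semi-definite $K\times K$ covariance matrix, $\mathrm{Tr}(S)\le K\rho(S)$. Hence
\begin{equation*}
\frac{\mathrm{Tr}(S)}{n\rho_{\min}(H^TH)}\le \frac{K}{n}\,\frac{\rho(S)}{\rho_{\min}(H^TH)}.
\end{equation*}
We also have $\mathrm{Cond}(H^TH)\ge 1$, so $\mathrm{Cond}(H^TH)^{3}\ge 1$. It remains to control the factor $K/n$. Since $H\in\RR^{n\times K}$ has full column rank $K$, we get $K\le n$, so $K/n\le 1$. Combining these facts, the second argument of the maximum is bounded by $\mathrm{Cond}(H^TH)^{3}\rho(S)/\rho_{\min}(H^TH)$. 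Therefore the maximum equals the first term, and the corollary holds with the same constant $D_{\delta}$ (or a relabeled one).

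I expect no real obstacle, since the reduction is purely algebraic. The only point to watch is the implicit use of $K\le n$, which must come from the full-rank hypothesis rather than from any assumption of \cref{Th:ProjOperatorRowsI}. The bound $\mathrm{Tr}(S)\le K\rho(S)$ also requires $S$ to be positive semi-definite; this holds because $S$ enters as a Gaussian covariance in Scenario 2. If one wanted to avoid using $K\le n$, the maximum would instead produce a factor $\max(1,K/n)$, which could then be absorbed only under that condition. The full-rank hypothesis is precisely what makes this absorption possible.
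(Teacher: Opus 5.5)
Your proposal is correct, and it is the simplification the paper has in mind. The paper gives no separate proof, only the remark that the bound of \cref{Th:ProjOperatorRowsPenalinise} ``can also be simplified'': using $\mathrm{Tr}(S)\le K\rho(S)$, $K\le n$ from full column rank, and $\mathrm{Cond}(H^TH)\ge 1$, the second argument of the maximum is dominated by the first. As a side remark, read literally, $P_{[H]}-P_{[H]_{\alpha}}$ is deterministic, and the paper's bias bound $|||P_{[H]}-P_{[H]_{\alpha}}|||^{2}\le \alpha/\rho_{\min}(H^TH)$ with $\alpha=2d_{\delta}n\gamma^{2}\rho(S)$ already gives the claim; your reduction is valid under either reading, including the intended one with $P_{[\hat{H}]_{\alpha}}$.
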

We recover the bound stated in \cref{Th:ProjOperatorRowsI}. The case of the dependence on the rows (Scenario 2) is then naturally transferred to the case of dependence on the columns with Scenario 3.

\begin{theorem}\label{Th:ProjOperatorColumnsPenalinise}
Let $\delta\in (0,1)$. Suppose the assumptions of Scenario 3 are satisfied and set 
\begin{equation}
\alpha=2d_{\delta}n\gamma^{2}\rho(A).
\end{equation}
Then with probability larger than $1-\delta$,
\begin{multline*}
    \frac{1}{n}|||P_{[H]}-P_{[H]_{\alpha}}|||^{2}\le\\ D_{\delta}\max\left(\mathrm{Cond}(H^TH)^{3}\frac{K\rho(A)}{n\rho_{\min}(H^TH)},\mathrm{Cond}(H^TH)^{3}\frac{\mathrm{Tr}(A)}{n\rho_{\min}(H^TH)},\frac{\rho(A)}{\rho_{\min}(H^TH)}\right).
\end{multline*}
\end{theorem}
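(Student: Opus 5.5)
The plan is to follow the regularized scheme used for \cref{Th:ProjOperatorRowsPenalinise}, with the row-covariance structure replaced by the column-covariance structure of Scenario 3. Write $\hat{H}=H+E$ and $M_\alpha=(\hat{H}^T\hat{H}+\alpha I_K)^{-1}$, and decompose
\begin{multline*}
P_{[H]}-P_{[\hat{H}]_\alpha}=H\big((H^TH)^{-1}-M_\alpha\big)H^T\\-EM_\alpha H^T-HM_\alpha E^T-EM_\alpha E^T .
\end{multline*}
Each term will be bounded in operator norm on a single high-probability event $\Omega_\delta$, on which the following three estimates hold simultaneously.

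The first estimate controls the quadratic form: $|||E^TE-\mathbb{E}[E^TE]|||\le d_\delta n\gamma^2\rho(A)$, with $\mathbb{E}[E^TE]=\gamma^2\mathrm{Tr}(A)I_K$. The second controls the size of the noise: $|||E|||^2\le c_\delta\gamma^2\big(\mathrm{Tr}(A)+K\rho(A)\big)$. The third controls the cross term: $|||H^TE|||^2\le c_\delta\gamma^2\rho(A)\,\mathrm{Tr}(H^TH)$, which is at most $c_\delta\gamma^2\rho(A)K\rho(H^TH)$. Each column satisfies $E_{\cdot,j}=\gamma A^{1/2}g_j$ with independent standard Gaussian vectors $g_j$. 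The three estimates then follow from the Gaussian concentration results of Appendix \ref{Sec:ProjOperatorPrelim}: Hanson--Wright type bounds for the first, Gaussian matrix norm bounds for $A^{1/2}G$ for the second, and the fact that the $j$-th column of $H^TE$ is $\mathcal{N}(0,\gamma^2H^TAH)$ for the third. These are the same bounds used in \cref{Th:ProjOperatorCollumns}.

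The key step, and the one that removes the spectral assumption, is a lower bound on $\hat{H}^T\hat{H}+\alpha I_K$. I would handle the cross terms by Young's inequality, $H^TE+E^TH\succeq -\tfrac12H^TH-2E^TP_{[H]}E$, and use the second estimate (projected onto the $K$-dimensional space $[H]$) to bound $|||E^TP_{[H]}E|||\lesssim\gamma^2K\rho(A)$, which is absorbed by $\alpha/2$ up to constants. This gives $\hat{H}^T\hat{H}+\alpha I_K\succeq \tfrac12H^TH+\tfrac{\alpha}{2}I_K$, hence $|||M_\alpha|||\le 2/(\rho_{\min}(H^TH)+\alpha)$. This holds for every $H$, with no condition on $\rho_{\min}(H^TH)$.

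With this in hand, the terms are bounded as follows.
\begin{itemize}
\item $|||EM_\alpha E^T|||\le |||E|||^2|||M_\alpha|||$, which yields the $\max\big(K\rho(A),\mathrm{Tr}(A)\big)/\rho_{\min}(H^TH)$ contributions.
\item The cross terms $EM_\alpha H^T$ are bounded by $|||E|||\,|||M_\alpha|||\,\rho(H^TH)^{1/2}$ and contribute the same quantities multiplied by powers of $\mathrm{Cond}(H^TH)$.
\item For the first term, the resolvent identity gives $(H^TH)^{-1}-M_\alpha=(H^TH)^{-1}\big(H^TE+E^TH+E^TE+\alpha I_K\big)M_\alpha$.
\end{itemize}
In the first term, the $\alpha$ piece contributes $\alpha\,|||H(H^TH)^{-1}|||\,|||M_\alpha H^T|||\lesssim\alpha/(\rho_{\min}(H^TH)+\alpha)\cdot\mathrm{Cond}(H^TH)^{1/2}$. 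After squaring, this is at most $\mathrm{Cond}(H^TH)\cdot\alpha/\rho_{\min}(H^TH)$, and since $\alpha\asymp n\gamma^2\rho(A)$ it gives the third entry $\rho(A)/\rho_{\min}(H^TH)$ of the maximum after dividing by $n$. I would check the constants against the stated form, keeping the bound on $\alpha/(\rho_{\min}+\alpha)\le\min(1,\alpha/\rho_{\min})$. The $H^TE$ and $E^TE$ pieces use the third and first estimates and produce the $\mathrm{Cond}(H^TH)^3\mathrm{Tr}(A)/(n\rho_{\min}(H^TH))$ and $\mathrm{Cond}(H^TH)^3 K\rho(A)/(n\rho_{\min}(H^TH))$ terms.

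Finally, squaring, dividing by $n$, and collecting the terms with a union bound over the three events gives the claimed maximum. I expect the main obstacle to be the lower bound on $\hat{H}^T\hat{H}+\alpha I_K$. If controlling $E^TP_{[H]}E$ with only $\alpha\asymp n\gamma^2\rho(A)$ turns out to be delicate when $K\rho(A)$ is comparable to $n\rho(A)$, I would instead use the trivial bound $|||M_\alpha|||\le 1/\alpha$ in the regime $\rho_{\min}(H^TH)\lesssim\alpha$. In that regime the target bound exceeds a constant and so holds trivially, because $|||P_{[H]}-P_{[\hat{H}]_\alpha}|||\le 2$ while the right-hand side is at least of order $\rho(A)/\rho_{\min}(H^TH)\gtrsim 1/n$. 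This splits the proof into the two cases $\rho_{\min}(H^TH)\ge 4\alpha$ and $\rho_{\min}(H^TH)<4\alpha$.
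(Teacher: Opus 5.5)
Your route is genuinely different from the paper's. You bound $|||E|||$ and $|||P_{[H]}E|||$ globally, and your Young-inequality lower bound $\hat H^T\hat H+\alpha I_K\succeq \tfrac12 H^TH+\tfrac{\alpha}{2}I_K$ is correct. Since $K\le n$, the quantity $\gamma^2K\rho(A)$ is at most of order $\alpha$, so the fallback case split is not needed. The gap is in the bookkeeping. For the $\alpha$-piece of the resolvent term you obtain $\mathrm{Cond}(H^TH)\,\alpha/\rho_{\min}(H^TH)$, which is $\mathrm{Cond}(H^TH)\gamma^2\rho(A)/\rho_{\min}(H^TH)$ after dividing by $n$, and you then call this the third entry of the maximum. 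It is not, and this is not a matter of constants. The third entry carries no condition number. The two entries that do carry $\mathrm{Cond}(H^TH)^3$ also carry, relative to $\gamma^2\rho(A)/\rho_{\min}(H^TH)$, the factors $K/n$ and $\mathrm{Tr}(A)/(n\rho(A))$. These can be tiny: take $A$ of rank one, $K$ fixed, and $1\ll \mathrm{Cond}(H^TH)\ll\sqrt{n}$. So the extra $\mathrm{Cond}(H^TH)$ cannot be absorbed into $D_\delta$. The same loss appears in the $E^TE$ piece if you feed in your first estimate: $|||E^TE|||\lesssim n\gamma^2\rho(A)$ again yields $\mathrm{Cond}(H^TH)\gamma^2\rho(A)/\rho_{\min}(H^TH)$, not the claimed $\mathrm{Cond}(H^TH)^3K\rho(A)/(n\rho_{\min}(H^TH))$. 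The culprit is the crude bound $|||M_\alpha H^T|||\le |||M_\alpha|||\,\rho(H^TH)^{1/2}$.

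Your own lower bound repairs this. It gives $M_\alpha\preceq 2(H^TH+\alpha I_K)^{-1}$, hence $|||HM_\alpha H^T|||\le 2$. It follows that $|||M_\alpha H^T|||^2=|||HM_\alpha^2H^T|||\le |||M_\alpha|||\,|||HM_\alpha H^T|||\le 4/(\rho_{\min}(H^TH)+\alpha)$. With this, the $\alpha$-piece squared is at most $4\alpha^2/\big(\rho_{\min}(H^TH)(\rho_{\min}(H^TH)+\alpha)\big)\le 4\alpha/\rho_{\min}(H^TH)$. Every other piece squared is $\lesssim |||E|||^2/\rho_{\min}(H^TH)$; for the two pieces quadratic in $E$, use $|||E|||^2\lesssim\alpha$ once. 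Since $|||E|||^2\lesssim \gamma^2(\mathrm{Tr}(A)+K\rho(A))\lesssim\alpha$, you get $\frac1n|||P_{[H]}-P_{[\hat H]_\alpha}|||^2\le C_\delta\gamma^2\rho(A)/\rho_{\min}(H^TH)$. This is the third entry alone, with no condition number, and it implies the theorem. Here the left-hand side is read, as you do, with $P_{[\hat H]_\alpha}$, and the right-hand side carries the factor $\gamma^2$ that the printed entries omit.

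Alternatively, split off the deterministic bias $P_{[H]}-P_{[H]_\alpha}$, whose norm is exactly $\alpha/(\rho_{\min}(H^TH)+\alpha)$; this is how the paper gets its condition-number-free third entry. The paper then expands $P_{[H]_\alpha}-P_{[\hat H]_\alpha}$ into $T_{1,\alpha}$, $T_{2,\alpha}$ and $\tilde{\mathrm{III}}_\alpha$. It reduces these to the quadratic forms $\mathrm{I}_\alpha,\mathrm{II}_\alpha,\mathrm{III}_\alpha$ at a fixed unit $u$, bounds each with the Laurent--Massart inequality, and takes a supremum over $u$. Invertibility comes from the event $\mathcal{B}_\delta$ on $\sum_j\|E_{\cdot,j}\|^2$, and the $\mathrm{Cond}(H^TH)^3$ factors accumulate in that expansion.

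Your global operator-norm route buys two things the paper's does not. First, it controls the supremum over $u$ uniformly; a fixed-$u$ deviation bound holds on a $u$-dependent event and does not by itself do this. Second, invertibility needs only $|||P_{[H]}E|||^2\lesssim\gamma^2K\rho(A)$. By contrast, $\sum_j\|E_{\cdot,j}\|^2$ has mean $K\gamma^2\mathrm{Tr}(A)$, which can exceed $n\gamma^2\rho(A)$ by a factor of order $K$.
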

In Scenario 3, we obtain a similar result to Scenario 2 \cref{Th:ProjOperatorCollumns}. A natural corollary is provided as for \cref{Cor:ProjOperatorColumns}.
\begin{coro}
Under Scenario 3, and assumptions of \cref{Th:ProjOperatorRows},
\begin{equation*}
    \frac{1}{n}|||P_{[\hat{H}]_{\alpha}}-P_{[H]}|||^{2}\le \gamma^{2}D_{\delta}\mathrm{Cond}(H^TH)^{3}\max\bigg(\frac{K}{n}\frac{\rho(A)}{\rho_{\min}(H^TH)},\frac{\mathrm{Tr}(A)}{n\rho_{\min}(H^TH)}\bigg).
\end{equation*}
\end{coro}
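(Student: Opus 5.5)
The corollary follows from \cref{Th:ProjOperatorColumnsPenalinise}. The plan is to show that each of the three terms inside the maximum there is dominated by $\gamma^{2}\mathrm{Cond}(H^TH)^{3}\max\big(\frac{K\rho(A)}{n\rho_{\min}(H^TH)},\frac{\mathrm{Tr}(A)}{n\rho_{\min}(H^TH)}\big)$, up to a constant depending only on $\delta$. We work with $\alpha=2d_{\delta}n\gamma^{2}\rho(A)$ and on the event of probability at least $1-\delta$ given by that theorem. We read the $\gamma^{2}$ factor as implicitly present in the displayed bound of \cref{Th:ProjOperatorColumnsPenalinise}; it is written in the other statements and appears in its proof, since every variance term there carries $\gamma^{2}$.

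\textbf{The first two terms.} These are exactly $\mathrm{Cond}(H^TH)^{3}\frac{K\rho(A)}{n\rho_{\min}(H^TH)}$ and $\mathrm{Cond}(H^TH)^{3}\frac{\mathrm{Tr}(A)}{n\rho_{\min}(H^TH)}$, i.e.\ the two entries of the target maximum. No work is needed for them.

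\textbf{The third term.} This is $\gamma^{2}\rho(A)/\rho_{\min}(H^TH)$, and it is the only delicate one because it lacks the factor $1/n$. I would go back to where it arises in the regularized proof, in \cref{Subsub:Scen12Reg} and its Scenario 3 analogue. There it comes from the bias $P_{[\hat H]}-P_{[\hat H]_\alpha}$, which is controlled by $\alpha/(\rho_{\min}(\hat H^T\hat H)+\alpha)$. Once the projection error is normalized by $\frac1n$, this bias contributes $\frac{\alpha}{n\rho_{\min}(H^TH)}\asymp d_\delta\gamma^{2}\rho(A)/\rho_{\min}(H^TH)$, times $\mathrm{Cond}$ factors.

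Next I would compare this contribution with the target. Since $A$ is an $n\times n$ covariance matrix, $\mathrm{Tr}(A)\ge\rho(A)$, but in general only $\mathrm{Tr}(A)\le n\rho(A)$. So $\rho(A)/\rho_{\min}$ is \emph{not} automatically bounded by $\mathrm{Tr}(A)/(n\rho_{\min})$. The step I expect to be the main obstacle is therefore absorbing this $\alpha$-bias term.

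\textbf{Handling the obstacle.} There are two options.
\begin{itemize}
\item[(i)] Invoke the assumptions of \cref{Th:ProjOperatorRows}, as the corollary states, which transpose to the Scenario 3 signal-to-noise condition \eqref{Eq:ProjOperatorColumnsAssum}. Under that condition \cref{Th:ProjOperatorCollumns} applies directly to the unregularized estimator, and \cref{Cor:ProjOperatorColumns} gives the target bound for $P_{[\hat H]}$. It then remains to show that $\frac1n|||P_{[\hat H]}-P_{[\hat H]_\alpha}|||^{2}$ is of the same order. We have $P_{[\hat H]}-P_{[\hat H]_\alpha}=\alpha\hat H(\hat H^T\hat H)^{-1}(\hat H^T\hat H+\alpha I_K)^{-1}\hat H^T$, whose operator norm is at most $\alpha/\rho_{\min}(\hat H^T\hat H)$. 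On the concentration event, $\rho_{\min}(\hat H^T\hat H)\ge\frac12\rho_{\min}(H^TH)$, so this bias is at most $\alpha^{2}/(n\rho_{\min}^{2})$ after normalization. Using \eqref{Eq:ProjOperatorColumnsAssum}, which says $\alpha/\rho_{\min}(H^TH)<1/2$, it is bounded by $\frac{\alpha}{n\rho_{\min}(H^TH)}\cdot\frac12\lesssim d_\delta\gamma^{2}\frac{\rho(A)}{\rho_{\min}(H^TH)}$. With the normalization used in the proofs of the appendix, where $\alpha$ enters divided by $n$ in the same way as the trace terms, this matches the order of the $\frac{K\rho(A)}{n\rho_{\min}}$ term.
\item[(ii)] If option (i) does not close, I would use the other available fact, $\mathrm{Cond}\ge1$, and accept the third term via $\rho(A)/\rho_{\min}(H^TH)\le\mathrm{Cond}(H^TH)^{3}\,\mathrm{Tr}(A)/\rho_{\min}(H^TH)$. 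This would require checking that the $1/n$ normalization of the bias matches the statement.
\end{itemize}

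\textbf{Conclusion.} Combine the three bounds with the triangle inequality $|||P_{[H]}-P_{[\hat H]_\alpha}|||^{2}\le 2|||P_{[H]}-P_{[\hat H]}|||^{2}+2|||P_{[\hat H]}-P_{[\hat H]_\alpha}|||^{2}$, and absorb $d_\delta$ and the numerical factors into $D_\delta$.
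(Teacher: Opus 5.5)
There is a genuine gap. It sits exactly at the obstacle you identified, and neither option closes it. With $\alpha=2d_{\delta}n\gamma^{2}\rho(A)$, the factor $n$ inside $\alpha$ cancels the $\frac1n$ normalization.

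In option (i), your estimate $\frac1n|||P_{[\hat H]}-P_{[\hat H]_\alpha}|||^{2}\le \frac{4\alpha^{2}}{n\rho_{\min}(H^TH)^{2}}\le \frac{2\alpha}{n\rho_{\min}(H^TH)}=4d_\delta\gamma^{2}\frac{\rho(A)}{\rho_{\min}(H^TH)}$ is correct up to constants. The closing claim that this ``matches the order'' of $\frac{K\rho(A)}{n\rho_{\min}(H^TH)}$ is false. It is larger by a factor $4d_\delta n/K$, and larger than $\frac{\mathrm{Tr}(A)}{n\rho_{\min}(H^TH)}$ by the factor $n\rho(A)/\mathrm{Tr}(A)\ge 1$. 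Option (ii) loses the same factor $n$: $\rho(A)\le\mathrm{Tr}(A)$ gives nothing like $\rho(A)\lesssim \mathrm{Tr}(A)/n$. The third term of \cref{Th:ProjOperatorColumnsPenalinise} can be absorbed only under an extra condition such as $n\lesssim \mathrm{Cond}(H^TH)^{3}\max\big(K,\mathrm{Tr}(A)/\rho(A)\big)$, essentially when $A$ has effective rank of order $n$.

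No sharper argument can repair this, because the regularization bias really has that size. Take $K=1$, $A=e_1e_1^T$, $H=\sqrt{\lambda}\,e_2$ with $\lambda=5d_\delta n\gamma^{2}$, and $E=\gamma\xi e_1$. Then the signal-to-noise condition holds and $\mathrm{Cond}(H^TH)=1$. The $e_2$-coordinate of $(P_{[\hat H]_\alpha}-P_{[H]})e_2$ has absolute value $\frac{\alpha+\gamma^{2}\xi^{2}}{\lambda+\alpha+\gamma^{2}\xi^{2}}\ge\frac{\alpha}{\lambda+\alpha}=\frac27$ for every $\xi$. So the left-hand side is at least $\frac{4}{49n}$, while the right-hand side equals $\frac{D_\delta}{5d_\delta n^{2}}$. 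The inequality therefore fails deterministically once $n>\frac{49D_\delta}{20d_\delta}$.

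The paper gives no separate proof of this corollary. It presents it as a ``natural'' simplification of \cref{Th:ProjOperatorColumnsPenalinise}, which silently discards the same term $\gamma^{2}\rho(A)/\rho_{\min}(H^TH)$ that you flagged. Your diagnosis is right, but the statement as written cannot be proved. What can be proved is \cref{Th:ProjOperatorColumnsPenalinise} with that term kept in the maximum, or the displayed bound under the additional condition above.
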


\section{Preliminaries}\label{Sec:ProjOperatorPrelim}
We state deviation results based on \textcite{Vershynin2012} on the eigenvalues of random matrices. The inequalities are non-asymptotic. We will present the different scenarios discussed and apply the results on each of these. The first result is naturally based on centered Gaussian variables entries. 
\begin{proposition}(Vershynin 2012)\label{Prop:SingularValueVershynin}
Let $X\in\RR^{n\times K}$ where $X_{ij}$ follows a standard normal distribution.
For every $t\ge 0$, we have with probability higher than $1-2\mathrm{exp}(-\frac{t^{2}}{2})$
$$\sqrt{n}-\sqrt{K}-t\le s_{\min}(X)\le s_{\max}(X)\le\sqrt{n}+\sqrt{K}+t,$$
where $s_{\min}$ and $s_{\max}$ are respectively the minimal and the maximal singular values of the matrix $X$.
\end{proposition}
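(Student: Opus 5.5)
The plan is the classical two-step argument: first control the \emph{expectations} of the extreme singular values with Gaussian comparison inequalities, then upgrade to a high-probability statement with Gaussian concentration for Lipschitz functions. Throughout, $X\in\RR^{n\times K}$ is viewed as a standard Gaussian vector in $\RR^{nK}$ with the Euclidean (Frobenius) structure.

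\emph{Step 1: expectations.} Write the extreme singular values variationally:
\[
s_{\max}(X)=\max_{u\in S^{K-1},\,v\in S^{n-1}} \langle Xu,v\rangle ,\qquad s_{\min}(X)=\min_{u\in S^{K-1}}\max_{v\in S^{n-1}}\langle Xu,v\rangle .
\]
Here $S^{m-1}$ is the unit sphere of $\RR^m$. Consider the Gaussian process $X_{u,v}=\langle Xu,v\rangle$ and the comparison process $Y_{u,v}=\langle g,u\rangle+\langle h,v\rangle$, with $g\sim\mathcal N(0,I_K)$ and $h\sim\mathcal N(0,I_n)$ independent.
\begin{itemize}
\item A direct computation of increments gives $\mathbb{E}(X_{u,v}-X_{u',v'})^2\le \mathbb{E}(Y_{u,v}-Y_{u',v'})^2$. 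Equality holds when $u=u'$.
\item Slepian--Fernique then yields $\mathbb{E}\,s_{\max}(X)\le \mathbb{E}\|g\|+\mathbb{E}\|h\|\le\sqrt K+\sqrt n$, using Jensen's inequality $\mathbb{E}\|g\|\le\sqrt{K}$.
\item For the minimum I apply Gordon's min--max inequality to the same pair of processes. It gives $\mathbb{E}\,s_{\min}(X)\ge \mathbb{E}\|h\|-\mathbb{E}\|g\|$.
\end{itemize}

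\emph{Main obstacle.} The step I expect to be delicate is the lower bound $\mathbb{E}\|h\|-\mathbb{E}\|g\|\ge\sqrt n-\sqrt K$. Jensen only gives $\mathbb{E}\|h\|\le\sqrt n$, which goes the wrong way. I will instead use the fact that $a_m:=\sqrt m-\mathbb{E}\|g_m\|$ is nonincreasing in $m$, for $g_m\sim\mathcal N(0,I_m)$.
\begin{itemize}
\item Express $\mathbb{E}\|g_m\|=\sqrt2\,\Gamma(\tfrac{m+1}{2})/\Gamma(\tfrac m2)$.
\item Check monotonicity of $a_m$ via log-convexity of $\Gamma$ (Gautschi/Wendel type inequalities).
\item If this is too fiddly, I would settle for the standard statement and cite \textcite{Vershynin2012}, Theorem 5.32, where exactly this is done.
\end{itemize}
Since $n\ge K$, monotonicity gives $a_n\le a_K$, which is exactly $\mathbb{E}\|h\|-\mathbb{E}\|g\|\ge\sqrt n-\sqrt K$.

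\emph{Step 2: concentration.} The maps $X\mapsto s_{\max}(X)$ and $X\mapsto s_{\min}(X)$ are $1$-Lipschitz with respect to the Frobenius norm. This follows from Weyl's perturbation inequality $|s_i(X)-s_i(X')|\le|||X-X'|||\le\|X-X'\|_F$. The Gaussian concentration inequality for $1$-Lipschitz functions (Tsirelson--Ibragimov--Sudakov) then gives
\[
\mathbb{P}\big(s_{\max}(X)>\mathbb{E}s_{\max}(X)+t\big)\le e^{-t^2/2},\qquad \mathbb{P}\big(s_{\min}(X)<\mathbb{E}s_{\min}(X)-t\big)\le e^{-t^2/2}.
\]

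\emph{Conclusion.} Combining these tail bounds with Step 1 and taking a union bound over the two events, with probability at least $1-2\exp(-t^2/2)$ we get
\[
\sqrt n-\sqrt K-t\le s_{\min}(X)\le s_{\max}(X)\le\sqrt n+\sqrt K+t ,
\]
which is the claim.
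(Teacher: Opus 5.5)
Your proof is correct and follows the same route as the paper, which gives no proof of its own and cites Vershynin (2012): Theorem 5.32 for the expectation bounds (Slepian--Fernique for $s_{\max}$, Gordon's min--max comparison with the process $\langle g,u\rangle+\langle h,v\rangle$ for $s_{\min}$) and Corollary 5.35 for the deviation bound (Gaussian concentration of the $1$-Lipschitz maps $X\mapsto s_{\max}(X)$, $X\mapsto s_{\min}(X)$), exactly as in your Steps 1--2. Two minor points: the case $n<K$, which the statement does not exclude, is trivial because then $\sqrt n-\sqrt K-t<0$; and although $\sqrt m-\mathbb{E}\|g_m\|$ is indeed nonincreasing, Gautschi--Wendel-type bounds such as $\mathbb{E}\|g_m\|\le\sqrt m$ are too crude to show it (even $\mathbb{E}\|g_{m+1}\|\ge\sqrt{m+1/2}$ combined with $\mathbb{E}\|g_m\|\,\mathbb{E}\|g_{m+1}\|=m$ only yields $\mathbb{E}\|g_{m+1}\|-\mathbb{E}\|g_m\|\ge\frac{1}{2\sqrt{m+1/2}}$, strictly below the required $\sqrt{m+1}-\sqrt m$), so you would need the next-order term of the Gamma ratio or, as you suggest, simply the cited expectation bound.
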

A relevant way to state this result is to consider the Gram matrix associated with $X$, which links the singular valuesto the eigenvalues of $\frac{1}{n}X^TX$. The statement is the following.
\begin{coro}(Vershynin 2012)\label{Cor:VershyninGram}
Under the assumptions of \cref{Prop:SingularValueVershynin}, for $t>0$ we have with probability higher than $1-\mathrm{exp}(-\frac{t^{2}}{2})$,
$$|||\frac{1}{n}X^TX-I_{K}|||\le 2\varepsilon_{K,n,t}+\varepsilon_{K,n,t}^{2},$$
where $\varepsilon_{K,n,t}=\sqrt{\frac{K}{n}}+\frac{t}{\sqrt{n}}$.
\end{coro}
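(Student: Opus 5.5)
The corollary follows from \cref{Prop:SingularValueVershynin} by turning bounds on the singular values into bounds on the eigenvalues of $\frac{1}{n}X^TX$. The key fact is that the eigenvalues of $\frac{1}{n}X^TX$ are exactly $s_i(X)^2/n$. Since $\frac{1}{n}X^TX-I_K$ is symmetric, its operator norm is
$$|||\tfrac{1}{n}X^TX-I_K|||=\max\Big(\tfrac{s_{\max}(X)^2}{n}-1,\;1-\tfrac{s_{\min}(X)^2}{n}\Big).$$
So the task reduces to controlling $s_{\max}(X)/\sqrt{n}$ and $s_{\min}(X)/\sqrt{n}$ on a single high-probability event.

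Divide the two-sided inequality of \cref{Prop:SingularValueVershynin} by $\sqrt{n}$. On the event $\Omega_t$ it describes, we get
$$1-\varepsilon_{K,n,t}\le \tfrac{s_{\min}(X)}{\sqrt{n}}\le\tfrac{s_{\max}(X)}{\sqrt{n}}\le 1+\varepsilon_{K,n,t},\qquad \varepsilon_{K,n,t}=\sqrt{K/n}+t/\sqrt{n}.$$
Squaring the upper bound gives $\frac{s_{\max}^2}{n}-1\le 2\varepsilon+\varepsilon^2$.

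For the lower side there are two cases. If $\varepsilon\le1$, the left-hand side $1-\varepsilon$ is nonnegative, so squaring is legitimate and gives $1-\frac{s_{\min}^2}{n}\le 2\varepsilon-\varepsilon^2\le 2\varepsilon+\varepsilon^2$. If $\varepsilon>1$, the trivial bound $1-\frac{s_{\min}^2}{n}\le 1<2\varepsilon$ already suffices. In both cases the operator norm is at most $2\varepsilon+\varepsilon^2$.

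The remaining issue is the probability, which I expect to be the only real obstacle. \cref{Prop:SingularValueVershynin} as stated gives probability $1-2\exp(-t^2/2)$, while the corollary claims $1-\exp(-t^2/2)$. I would handle this in one of two ways. The first is to go back to Vershynin's sharper formulation, where the two one-sided bounds come from Gordon's inequality combined with Gaussian concentration of the $1$-Lipschitz maps $X\mapsto s_{\max}(X)$ and $X\mapsto s_{\min}(X)$, each failing with probability at most $\exp(-t^2/2)$. The second is to accept the constant $2$: replace $t$ by $t'=\sqrt{t^2+2\ln 2}$ so that $2\exp(-t'^2/2)=\exp(-t^2/2)$, which changes $\varepsilon$ only by an absolute amount. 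Since the corollary is used later only up to constants depending on $\delta$, this discrepancy is harmless, and I would note it explicitly in the write-up.
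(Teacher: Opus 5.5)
Your deterministic reduction is correct and is the argument the paper has in mind; the paper gives no proof beyond noting that the eigenvalues of $\frac{1}{n}X^TX$ are $s_i(X)^2/n$. Your case split $\varepsilon_{K,n,t}\le 1$ versus $\varepsilon_{K,n,t}>1$ on the lower side is handled properly.

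The one thing to fix is your first remedy for the probability, which does not work. Gordon's inequality plus one-sided Gaussian concentration does give $\mathbb{P}\bigl(s_{\max}(X)>\sqrt n+\sqrt K+t\bigr)\le e^{-t^2/2}$ and $\mathbb{P}\bigl(s_{\min}(X)<\sqrt n-\sqrt K-t\bigr)\le e^{-t^2/2}$. But your argument needs both events at once, since the lower side is vacuous only when $2\varepsilon_{K,n,t}+\varepsilon_{K,n,t}^2\ge 1$. The union bound then returns exactly the $2e^{-t^2/2}$ of \cref{Prop:SingularValueVershynin}, which is why Vershynin's own statement carries the factor $2$. You also cannot avoid the union bound by concentrating the single $1$-Lipschitz function $\max\bigl(s_{\max}(X)-\sqrt n-\sqrt K,\ \sqrt n-\sqrt K-s_{\min}(X)\bigr)$: Gordon bounds the two expectations separately, not the expectation of their maximum.

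Your second remedy is valid, but it proves the bound with $\varepsilon_{K,n,t'}$, $t'=\sqrt{t^2+2\ln 2}$, not the displayed one. The cleaner resolution is that the $1-\exp(-t^2/2)$ in the statement is a typo. In the proof of \cref{Prop:VershyninRows}, the paper invokes this corollary ``with probability higher than $1-2\exp(-\frac{t^{2}}{2})$'' and chooses $t=\sqrt{2\ln(2/\delta)}$ precisely so that $2e^{-t^2/2}=\delta$. So the version your argument proves directly, with probability $1-2\exp(-t^2/2)$, is the one used downstream, and you should state it that way rather than invoke a sharper concentration argument.
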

This corollary shows that the estimation of the covariance matrix $I_{K}$ by the sample covariance matrix directly depends of the ratio $\max\left(\frac{K}{n},\sqrt{\frac{K}{n}}\right)$. This standard case can be naturally generalized to the setting where the rows $X_{i}^{T}\sim\mathcal{N}(0,S)$ with $S\in\mathbb{R}^{K\times K}$.  
\begin{proposition}\label{Prop:VershyninRows}
Let $X\in\mathbb{R}^{n\times K}$ where $X_{i}^{T}\sim\mathcal{N}(0,S)$ are $iid$ for $i\in\{1,...,n\}$. Let $\delta\in(0,1)$, we have with probability higher than $1-\delta$,
\begin{equation}
|||\frac{1}{n}X^TX-S|||\le 4|||S|||D_{\delta}\max\left(\sqrt{\frac{K}{n}},\frac{K}{n}\right),
\end{equation}
with $D_{\delta}=1+2\ln(\frac{2}{\delta})+2\sqrt{\ln(\frac{2}{\delta})}.$
\end{proposition}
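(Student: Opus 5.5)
The plan is to whiten the rows and reduce to the standard Gaussian case handled by \cref{Cor:VershyninGram}. Since $X_i^T\sim\mathcal{N}(0,S)$ i.i.d., we can write $X=ZS^{1/2}$, where $Z\in\RR^{n\times K}$ has i.i.d.\ standard normal entries. This gives
$$\frac{1}{n}X^TX-S=S^{1/2}\Big(\frac{1}{n}Z^TZ-I_K\Big)S^{1/2}.$$
Submultiplicativity of the operator norm, together with $|||S^{1/2}|||^2=|||S|||$, then yields
$$\Big|\Big|\Big|\frac{1}{n}X^TX-S\Big|\Big|\Big|\le |||S|||\,\Big|\Big|\Big|\frac{1}{n}Z^TZ-I_K\Big|\Big|\Big|.$$
If $S$ is only positive semidefinite, the same factorization holds with the PSD square root, so nothing changes.

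Next we apply \cref{Cor:VershyninGram} to $Z$. With probability at least $1-\exp(-t^2/2)$ (or $1-2\exp(-t^2/2)$, depending on the version of \cref{Prop:SingularValueVershynin} used, which is where the $2$ in $\ln(2/\delta)$ comes from), we have
$$\Big|\Big|\Big|\frac{1}{n}Z^TZ-I_K\Big|\Big|\Big|\le 2\varepsilon+\varepsilon^2,\qquad \varepsilon=\sqrt{\frac{K}{n}}+\frac{t}{\sqrt n}.$$
We choose $t=\sqrt{2\ln(2/\delta)}$, so the failure probability is at most $\delta$.

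It remains to turn $2\varepsilon+\varepsilon^2$ into $4D_\delta\max(\sqrt{K/n},K/n)$, and this bookkeeping is the only real obstacle. Set $m=\max(\sqrt{K/n},K/n)$ and $L=\ln(2/\delta)$. Since $K\ge1$, we have $1/\sqrt n\le\sqrt{K/n}\le m$ and $1/n\le K/n\le m$. For the linear term,
$$2\varepsilon=2\sqrt{K/n}+2\sqrt{2L}\,\tfrac{1}{\sqrt n}\le 2m\bigl(1+\sqrt{2L}\bigr).$$
For the quadratic term, $(a+b)^2\le 2a^2+2b^2$ gives
$$\varepsilon^2\le \tfrac{2K}{n}+\tfrac{4L}{n}\le 2m+4Lm.$$
Adding the two,
$$2\varepsilon+\varepsilon^2\le m\bigl(4+4L+2\sqrt2\sqrt L\bigr)\le 4m\bigl(1+2L+2\sqrt L\bigr)=4mD_\delta.$$
The constants are loose, so any small discrepancy in the probability factor is absorbed by the slack in $D_\delta$.

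Combining this with the whitening inequality from the first step gives the claim.
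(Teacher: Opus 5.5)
Your proof is correct and follows the paper's argument: write $X=ZS^{1/2}$, bound $|||\frac{1}{n}X^TX-S|||\le|||S|||\,|||\frac{1}{n}Z^TZ-I_K|||$, apply \cref{Cor:VershyninGram}, and take $t=\sqrt{2\ln(2/\delta)}$. The one difference is that you carry out the bookkeeping for $2\varepsilon+\varepsilon^2\le 4D_\delta\max(\sqrt{K/n},K/n)$ (using $K\ge 1$), which the paper leaves implicit, and that arithmetic is correct.
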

\begin{proof}
We use the fact that $X=ZS^{\frac{1}{2}}$ where $Z$ entries are standard Gaussian variables. 
Thanks to \cref{Cor:VershyninGram}, we have with probability higher than $1-2\mathrm{exp}(-\frac{t^{2}}{2})$, 
\begin{align*}
|||\frac{1}{n}X^TX-S|||&=|||S^{\frac{1}{2}}Z^TZS^{\frac{1}{2}}-S|||\\
&\le |||\frac{1}{n}Z^TZ-I_{K}|||\cdot|||S|||\\
&\le |||S|||\cdot(2\varepsilon_{K,n,t}+\varepsilon_{K,n,t}^{2}).
\end{align*}
By setting $t=\sqrt{2\ln(\frac{2}{\delta})}$ we get the desired result.
\end{proof}
We refer to \textcite{Barzilai} for more details on the estimation of the $i$-th eigenvalue of a Gram matrix under different scenarios.

\paragraph*{}As in \textcite{Cast} and \textcite{Cast2}, we will use a deviation result for some quadratic function of Gaussian vectors from \textcite{Laurent} in order to obtain non-asymptotic upper bounds.
\begin{proposition}(\textbf{Deviation result})\label{Prop:DeviationResult}
Let $u\sim\mathcal{N}(0,tU)$ with $t\in\mathbb{R}_{+}$ and $U\in\mathbb{R}^{D\times D}$ a symmetric positive matrix. 
For $x\ge0$, we have, with probability higher than $1-e^{-x},$
$$\|u\|^{2}\le g(x)t\mathrm{Tr}(U^{2}),$$
with $g(x)=1+2x+2\sqrt{x}$. Setting $x_{\delta}=\ln(1/\delta)$, we have, with probability higher than $1-\delta,$
$$\|u\|^{2}\le C_{\delta}t\mathrm{Tr}(U^{2}),$$
with $C_{\delta}=g(x_{\delta})$.
\end{proposition}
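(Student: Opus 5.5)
The plan is to reduce to a weighted sum of independent $\chi^2_1$ variables and apply the Laurent--Massart inequality (\textcite{Laurent}). One point has to be settled first: the trace $\mathrm{Tr}(U^{2})$ in the bound corresponds to reading $U$ as the square-root factor of the covariance, i.e.\ $u=\sqrt{t}\,Uz$ with $z\sim\mathcal{N}(0,I_D)$, so that $\mathrm{Cov}(u)=tU^{2}$. If the covariance is literally $tU$, the argument below is unchanged with $U^{2}$ replaced by $U$, and the bound holds with $\mathrm{Tr}(U)$ in place of $\mathrm{Tr}(U^{2})$.

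First, diagonalise. Since $U$ is symmetric positive, write $U^{2}=O\Lambda O^{T}$ with $O$ orthogonal and $\Lambda=\mathrm{diag}(\lambda_1,\dots,\lambda_D)$, $\lambda_i\ge 0$. By rotational invariance, $w:=O^{T}z\sim\mathcal{N}(0,I_D)$, and
$$\|u\|^{2}=t\,z^{T}U^{2}z=t\sum_{i=1}^{D}\lambda_i w_i^{2},$$
where the $w_i^{2}$ are i.i.d.\ $\chi^{2}_1$.

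Second, apply Laurent--Massart (their Lemma 1) with the nonnegative weights $a_i=\lambda_i$. For every $x\ge 0$, with probability at least $1-e^{-x}$,
$$\sum_i\lambda_i w_i^{2}\le \sum_i\lambda_i+2\sqrt{x\textstyle\sum_i\lambda_i^{2}}+2x\max_i\lambda_i .$$

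Third, compare each term to $\mathrm{Tr}(U^{2})=\sum_i\lambda_i$. Because the $\lambda_i$ are nonnegative,
$$\sum_i\lambda_i^{2}\le\Big(\sum_i\lambda_i\Big)^{2} \quad\text{and}\quad \max_i\lambda_i\le\sum_i\lambda_i .$$
The right-hand side above is therefore at most $(1+2\sqrt{x}+2x)\,\mathrm{Tr}(U^{2})=g(x)\,\mathrm{Tr}(U^{2})$. Multiplying by $t$ gives the first claim.

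Finally, set $x_\delta=\ln(1/\delta)$, so that $e^{-x_\delta}=\delta$; this gives the second statement with $C_\delta=g(x_\delta)$.

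Once the interpretation of $U$ is fixed, none of these steps is hard. The only real obstacle is matching the normalisation of the statement, i.e.\ recognising that $\mathrm{Tr}(U^{2})$ arises from a covariance of the form $tU^{2}$. The rest is the crude comparison of $\sqrt{\mathrm{Tr}(U^4)}$ and $\rho(U^{2})$ with $\mathrm{Tr}(U^{2})$, which loses sharpness but is exactly what yields the simple factor $g(x)$.
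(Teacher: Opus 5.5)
Your argument is correct and is essentially the paper's route: the paper gives no proof beyond citing Laurent--Massart, and your diagonalisation plus the crude bounds $\sqrt{\sum_i\lambda_i^{2}}\le\sum_i\lambda_i$ and $\max_i\lambda_i\le\sum_i\lambda_i$ is exactly how $g(x)=1+2x+2\sqrt{x}$ comes out of their Lemma~1. You were also right to flag the normalisation, since read literally (covariance $tU$, bound $t\,\mathrm{Tr}(U^{2})$) the statement fails for small $U$; the paper always applies it as $\|u\|^{2}\le C_{\delta}\,\mathrm{Tr}(\mathrm{Var}(u))$ (e.g.\ $\mathrm{Var}(H^TE\Lambda_u)=\gamma^{2}\Lambda_u^TS\Lambda_u H^TH$ yields the factor $\mathrm{Tr}(H^TH)$), which is the $\mathrm{Tr}(U)$ version you describe.
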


\section{Proof for the estimation of projection operator}\label{Sec:ProjOperatorSkeleton}
\subsection{Sketch of proof}
This section is dedicated to the proofs of \cref{Th:ProjOperatorIndependent}, \cref{Th:ProjOperatorRows}, \cref{Th:ProjOperatorCollumns} and \cref{Th:ProjOperatorGeneralized}. We propose a general proof framework that adapts to each scenario. We consider here the main procedure to prove the different results cited above.
Let $u\in\mathbb{R}^{n}$ such that $u^Tu=1$. We consider 
$$\frac{1}{n}\|P_{[H]}(u)-P_{[\hat{H}]}(u)\|^{2}.$$

The proof involves three steps, 
\begin{enumerate}
\item First considering an assumption which leads to the inversion of $\hat{H}^T\hat{H}$ and different consequences of this assumption. 
\item As a second step we will consider the bounds of three main terms denoted by $\mathrm{I},\mathrm{II}$ and $\mathrm{III}$. 
\item Finally taking into account the supremum on all $u$ such that $\|u\|=1$ to get the final bound on the estimation of the operator norm.
\end{enumerate}
Before stating the theorem, we introduce an event, a general assumption and three terms involved in the bound. 
\begin{definition}\label{Def:EventA}
Let $\delta\in(0,1)$. We define an event $\mathcal{A}_{\delta}$ satisfying $\mathbb{P}(\mathcal{A}_{\delta})\ge 1-\delta$, 
on which we have $\rho(E^TE)\le d_{\delta,K}\Psi$ where $d_{\delta,K}$ depending on $\delta$ and $K$ with $\Psi\ge0$. 
\end{definition}
According to the distribution on $E\in \RR^{n\times K}$, the terms $d_{\delta,K}$ and $\Psi$ will be precised later for each scenario. $\Psi$ is deterministic and depends of the parameters of the distribution of $E$ and is directly related to the variance term of the coefficients of $\hat{H}$. We specify that the dependence on $K$ for $d_{\delta,K}$ only occurs for Scenario 4. 
\begin{description}
\item[Assumption E.1.]\label{Ass:AssumptionE.1}\textit{The matrix $H^TH$ satisfies $\frac{1}{4}\rho_{\min}(H^TH)>d_{\delta,K}\Psi$, with $d_{\delta,K}$ depending only on $\delta$ and $K$}.
\end{description}

\paragraph*{}The three main terms are the following for $u\in\RR^{n}$ with $u^Tu=1$,
\begin{align}
    \mathrm{I}&=\|E(H^TH)^{-1}H^Tu\|^{2}\label{Eq:ProjOperatorI}\\
    \mathrm{II}&=\|\left(\hat{H}^T\hat{H}-H^TH\right)(H^TH)^{-1}H^Tu\|^{2}\label{Eq:ProjOperatorII}\\
    \mathrm{III}&=\|H(H^TH)^{-1}E^Tu\|^{2}.\label{Eq:ProjOperatorIII}
\end{align}
The dependence on $u$ is omitted for the sake of simplicity. 
For $u\in\mathbb{R}^{n}$ with $u^Tu=1$, we introduce $\Lambda_{u}\in\mathbb{R}^{K}$ defined as 
\begin{equation}\label{Eq:ProjOperatorLambda}
\Lambda_{u}=(H^TH)^{-1}H^Tu.
\end{equation}
\begin{theorem}\label{Th:ProjOperatorSkeletonProof}
Let $\delta\in(0,1)$. Under \textbf{Assumption E.1} we have, for $u\in\RR^{K}$ with $u^Tu=1$, on the event $\mathcal{A}_{\delta}$,
\begin{equation}
    \|P_{[H]}(u)-P_{[\hat{H}]}(u)\|^{2}\le \tilde{C}_{1}\mathrm{I}+\tilde{C}_{2}\mathrm{Cond}(H^TH)\rho_{\min}(H^TH)^{-1}\mathrm{II}+\tilde{C}_{3}\mathrm{Cond}(H^TH)^{3}\mathrm{III}.
\end{equation}
\end{theorem}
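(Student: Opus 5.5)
The plan is to write the difference of projections applied to a unit vector $u\in\RR^{n}$ as an explicit algebraic expression, split it into pieces that contain exactly the three quantities $\mathrm{I},\mathrm{II},\mathrm{III}$, and control every deterministic prefactor on $\mathcal{A}_{\delta}$ using \textbf{Assumption E.1}. (The statement says $u\in\RR^K$; I read this as $u\in\RR^n$, consistent with the definitions of $\mathrm{I},\mathrm{II},\mathrm{III}$.) Write $M=H^TH$ and $\hat M=\hat H^T\hat H$, so that $P_{[H]}u=H\Lambda_u$ and $P_{[\hat H]}u=\hat H\hat M^{-1}\hat H^Tu$.

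\emph{Step 1: invertibility of $\hat M$.} On $\mathcal{A}_{\delta}$ we have $|||E|||\le\sqrt{d_{\delta,K}\Psi}<\tfrac12\sqrt{\rho_{\min}(M)}$ by \textbf{Assumption E.1}. Weyl's inequality for singular values then gives $s_{\min}(\hat H)\ge s_{\min}(H)-|||E|||>\tfrac12\sqrt{\rho_{\min}(M)}$. Hence $\hat M$ is invertible and
\[
\rho(\hat M^{-1})\le 4\rho_{\min}(M)^{-1}.
\]
In the same way $s_{\max}(\hat H)\le \tfrac32\sqrt{\rho(M)}$, so $\rho(\hat M)\le c\,\rho(M)$. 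These are the only consequences of the event and the assumption that will be needed.

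\emph{Step 2: algebraic decomposition.} Use $\hat H^Tu=H^Tu+E^Tu$ and $H^Tu=M\Lambda_u$, and insert $\pm\hat H\hat M^{-1}\hat M\Lambda_u=\pm\hat H\Lambda_u$:
\[
P_{[\hat H]}u-P_{[H]}u=\hat H\hat M^{-1}(M-\hat M)\Lambda_u+E\Lambda_u+\hat H\hat M^{-1}E^Tu.
\]
The three pieces are handled as follows.
\begin{itemize}
\item The middle piece has squared norm exactly $\mathrm{I}$.
\item For the first piece, $|||\hat H\hat M^{-1}|||^2=\rho(\hat M^{-1})\le 4/\rho_{\min}(M)$, so its squared norm is at most $4\rho_{\min}(M)^{-1}\,\mathrm{II}$. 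This is in fact stronger than the stated $\mathrm{Cond}(M)\rho_{\min}(M)^{-1}\mathrm{II}$, since $\mathrm{Cond}(M)\ge1$.
\item For the last piece, $E^Tu$ has to be tied to $\mathrm{III}=\|HM^{-1}E^Tu\|^2$. Since $HM^{-1}$ has singular values in $[\rho(M)^{-1/2},\rho_{\min}(M)^{-1/2}]$, we get $\|E^Tu\|^2\le\rho(M)\,\mathrm{III}$. Combining with Step 1,
\[
\|\hat H\hat M^{-1}E^Tu\|^2\le 4\rho_{\min}(M)^{-1}\rho(M)\,\mathrm{III}=4\,\mathrm{Cond}(M)\,\mathrm{III}.
\]
This is again within the claimed $\mathrm{Cond}(M)^3\,\mathrm{III}$.
\end{itemize}
Applying $(a+b+c)^2\le 3(a^2+b^2+c^2)$ then gives the theorem with explicit constants $\tilde C_1,\tilde C_2,\tilde C_3$ (for instance $3$, $12$, $12$).

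\emph{Main obstacle.} The only genuinely delicate step is Step 1: turning the spectral control $\rho(E^TE)\le d_{\delta,K}\Psi$ on $\mathcal{A}_{\delta}$ into a uniform lower bound on $\rho_{\min}(\hat M)$. A naive expansion $\hat M=M+H^TE+E^TH+E^TE$ would need a separate bound on the cross terms $H^TE$, which the event does not provide. Working at the level of singular values of $\hat H=H+E$ avoids this, since Weyl's inequality involves only $|||E|||$. The factor $\tfrac14$ in \textbf{Assumption E.1} is exactly what makes $\sqrt{\rho_{\min}(M)}-|||E|||\ge\tfrac12\sqrt{\rho_{\min}(M)}$.

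If one instead follows a route expanding $\hat M^{-1}-M^{-1}$ through a Neumann series, the extra condition-number powers in the statement arise naturally. Either way the stated inequality holds, because the bounds above dominate termwise.
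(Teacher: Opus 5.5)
Your proof is correct, and it takes a genuinely different and sharper route than the paper. In your notation $M=H^TH$, $\hat M=\hat H^T\hat H$, the paper splits $P_{[H]}u-P_{[\hat H]}u=-E\hat M^{-1}\hat H^Tu+H(M^{-1}-\hat M^{-1})\hat H^Tu-HM^{-1}E^Tu$. It then unpacks the first two pieces with the resolvent identity $\hat M^{-1}-M^{-1}=\hat M^{-1}(M-\hat M)M^{-1}$ and the split $\hat H^Tu=H^Tu+E^Tu$. In the pieces where $\hat M-M$ acts on $M^{-1}E^Tu$ rather than on $\Lambda_u$, it can only be controlled through $\rho(\hat M-M)\le 2\rho(M)$ (\cref{Prop:ConsequencesAssumption}). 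The left factor $H\hat M^{-1}$ costs a further $\rho(M)/\rho_{\min}(\hat M)^2$. Together these produce the factor $\mathrm{Cond}(M)\rho_{\min}(M)^{-1}$ in front of $\mathrm{II}$ and $\mathrm{Cond}(M)^3$ in front of $\mathrm{III}$.

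Your insertion of $\pm\hat H\Lambda_u$ makes $\hat M-M$ act only on $\Lambda_u$, and it leaves $\hat H\hat M^{-1}$, whose squared norm is $\rho_{\min}(\hat M)^{-1}$, on the left. So you never need $\rho(\hat M-M)$, and you obtain $3\,\mathrm{I}+12\rho_{\min}(M)^{-1}\mathrm{II}+12\,\mathrm{Cond}(M)\,\mathrm{III}$. Combined with the paper's scenario-wise bounds on the three terms, this would lower the power of $\mathrm{Cond}(H^TH)$ in the main theorems from $3$ to $1$.

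Your Step 1 is also the correct version of the invertibility step. The bound $\|\hat Hx\|\ge\|Hx\|-\|Ex\|$ gives $\rho_{\min}(\hat M)>\tfrac14\rho_{\min}(M)$. \cref{Prop:ConsequenceAssInverse} instead claims $\tfrac12$, but its proof drops a factor $4$ when passing from $\tfrac34\rho_{\min}(M)-4\rho(E^TE)$ to $\tfrac34\rho_{\min}(M)-d_{\delta,K}\Psi$. In fact $\tfrac12$ does not follow from E.1: take $K=1$ and $E=-cH$ with $1-1/\sqrt{2}<c<\tfrac12$. This only changes the constants in the paper's proof.

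Your closing remark overstates the obstacle. The cross terms in $\hat M=M+H^TE+E^TH+E^TE$ need nothing beyond the event, since $|x^TE^THx|\le\|Ex\|\,\|Hx\|$ gives $x^T\hat Mx\ge(\|Hx\|-\|Ex\|)^2$. That is the same estimate as your Weyl step.
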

Before proving the theorem, we list results that directly follow from \textbf{Assumption E.1}. In order to ensure a control on the error of estimation $H^TH$ and its inverse, we consider in each case an assumption on the quantity $\rho_{\min}(H^TH)$.
\begin{proposition}\label{Prop:ConsequenceAssInverse}
Under \textbf{Assumption E.1} we have, on $\mathcal{A}_{\delta}$, 
$$\rho_{\min}(\hat{H}^T\hat{H})\ge \frac{\rho_{\min}(H^TH)}{2}.$$
\end{proposition}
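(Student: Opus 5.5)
The plan is to compare the two Gram matrices through Weyl's inequality and then spend Assumption E.1 to absorb the perturbation. Write
\begin{equation*}
\hat{H}^T\hat{H}=H^TH+\Delta,\qquad \Delta:=H^TE+E^TH+E^TE .
\end{equation*}
All matrices involved are symmetric. By Weyl's inequality,
\begin{equation*}
\rho_{\min}(\hat{H}^T\hat{H})\ge \rho_{\min}(H^TH)-|||\Delta|||,
\end{equation*}
so it suffices to show that $|||\Delta|||\le \frac{1}{2}\rho_{\min}(H^TH)$ on $\mathcal{A}_{\delta}$. Under Assumption E.1 this reduces to showing $|||\Delta|||\le 2d_{\delta,K}\Psi$, because then $|||\Delta|||<\frac{2}{4}\rho_{\min}(H^TH)$.

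The quadratic part is immediate from the event. Since $E^TE$ is positive semidefinite, $|||E^TE|||=\rho(E^TE)\le d_{\delta,K}\Psi$ on $\mathcal{A}_{\delta}$, which already takes up half of the admissible budget.

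\textbf{Main obstacle: the cross term $H^TE+E^TH$.} A direct bound gives $|||H^TE|||\le \rho(H^TH)^{1/2}\rho(E^TE)^{1/2}$, which involves $\rho(H^TH)$ and not $\rho_{\min}(H^TH)$. The alternative route avoids $\Delta$ altogether: for a unit vector $x$, the inequality $\|Hx+Ex\|\ge \|Hx\|-\|Ex\|$ together with $\|Ex\|^2\le d_{\delta,K}\Psi<\frac{1}{4}\rho_{\min}(H^TH)$ only gives $\rho_{\min}(\hat{H}^T\hat{H})\ge\frac{1}{4}\rho_{\min}(H^TH)$. The rank-one case $E=-cH$ with $c$ close to $1/2$ shows that the factor $1/4$ cannot be improved using only the control of $\rho(E^TE)$.

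To reach the constant $1/2$ as stated, I would therefore build the cross term into the event $\mathcal{A}_{\delta}$. Definition \ref{Def:EventA} leaves $d_{\delta,K}$ and $\Psi$ to be fixed scenario by scenario, so I would choose them so that, on $\mathcal{A}_{\delta}$, both
\begin{equation*}
\rho(E^TE)\le d_{\delta,K}\Psi\qquad\text{and}\qquad |||H^TE+E^TH|||\le d_{\delta,K}\Psi
\end{equation*}
hold. The cross-term control is a Gaussian deviation statement. In Scenarios 1--3, each entry of $H^TE$ is a centred Gaussian linear form in $E$, so its operator norm can be controlled through Proposition \ref{Prop:VershyninRows} or Proposition \ref{Prop:DeviationResult}. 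In Scenario 4, I would apply Proposition \ref{Prop:DeviationResult} to each column and take a union bound over the $K$ columns, which is where the factor $\ln(K/\delta)$ enters. The probability is then split as $\delta/2+\delta/2$ between the two requirements.

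With the event defined in this way, the argument concludes as follows on $\mathcal{A}_{\delta}$:
\begin{equation*}
|||\Delta|||\le |||H^TE+E^TH|||+|||E^TE|||\le 2d_{\delta,K}\Psi<\tfrac12\rho_{\min}(H^TH).
\end{equation*}
Combined with Weyl's inequality, this gives $\rho_{\min}(\hat{H}^T\hat{H})\ge\frac{1}{2}\rho_{\min}(H^TH)$. It also shows that $\hat{H}^T\hat{H}$ is invertible, which is the consequence used later in the proof of Theorem \ref{Th:ProjOperatorSkeletonProof}.
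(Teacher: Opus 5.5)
Your diagnosis of the obstacle is correct, and it exposes a slip in the paper's own proof. The paper shows $x^T\hat{H}^T\hat{H}x\ge\frac{3}{4}\rho_{\min}(H^TH)-4\rho(E^TE)$, using $2|x^TE^THx|\le\frac{1}{4}\|Hx\|^{2}+4\|Ex\|^{2}$. On $\mathcal{A}_{\delta}$ it then replaces $4\rho(E^TE)$ by $d_{\delta,K}\Psi$ instead of $4d_{\delta,K}\Psi$. With the factor $4$ restored, Assumption E.1 only gives a lower bound larger than $-\frac{1}{4}\rho_{\min}(H^TH)$, which is vacuous. Your example $E=-cH$ (with $K=1$, or $H^TH$ a multiple of $I_K$) shows that $\frac{1}{4}$ is the best constant obtainable from $\rho(E^TE)\le d_{\delta,K}\Psi<\frac{1}{4}\rho_{\min}(H^TH)$ alone. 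So the proposition, with the same $d_{\delta,K}\Psi$ in the event and in Assumption E.1, is not true as stated.

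The gap is in your repair. You cannot choose $d_{\delta,K}$ (depending only on $\delta$ and $K$) together with the scenario quantity $\Psi$ (e.g.\ $\gamma^{2}n\rho(S)$) so that $|||H^TE+E^TH|||\le d_{\delta,K}\Psi$ on an event of probability $1-\delta$. The dependence on $\rho(H^TH)$ that you flagged in the deterministic bound is also present in the Gaussian one. In Scenario 1, let $v_1$ be a unit top eigenvector of $H^TH$ and $u_1=Hv_1/\rho(H^TH)^{1/2}$. Then $|||H^TE+E^TH|||\ge 2|v_1^TH^TEv_1|=2\rho(H^TH)^{1/2}|u_1^TEv_1|$ with $u_1^TEv_1\sim\mathcal{N}(0,\gamma^{2})$. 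So the cross term is at least of order $\gamma\rho(H^TH)^{1/2}$, which no multiple of $\gamma^{2}n$ controls uniformly in $H$. Absorbing it into $\Psi$ turns Assumption E.1 into a requirement of the form $\rho_{\min}(H^TH)\gtrsim\mathrm{Cond}(H^TH)\gamma^{2}$ (up to factors in $\delta$ and $K$). The theorems do not assume this, and it fails for ill-conditioned $H$, so you would be proving a different statement.

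The underlying problem is Weyl's inequality: it charges the cross term in its worst direction against $\rho_{\min}(H^TH)$. In direction $x$, however, the cross term equals $2(Hx)^TEx$ and is at most $2\|Hx\|\|Ex\|$. Keeping $\Psi$ and moving a constant works instead. From $\|\hat{H}x\|\ge\|Hx\|-\|Ex\|$ you get $\rho_{\min}(\hat{H}^T\hat{H})\ge(1-\sqrt{\beta})^{2}\rho_{\min}(H^TH)$ whenever $\rho(E^TE)\le\beta\rho_{\min}(H^TH)$ with $\beta<1$. You can then either strengthen E.1 to $\frac{1}{16}\rho_{\min}(H^TH)>d_{\delta,K}\Psi$, which gives the constant $\frac{9}{16}$, also closes the paper's chain with the factor $4$ kept, and only rescales $d_{\delta}$ in the theorems. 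Or you can keep E.1 and conclude $\rho_{\min}(\hat{H}^T\hat{H})>\frac{1}{4}\rho_{\min}(H^TH)$, which only changes numerical constants in the proof of Theorem \ref{Th:ProjOperatorSkeletonProof}.
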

\begin{proof}
Let $x\in\RR^{K}$ such that $x^Tx=1$.
\begin{align*}
    x^T\hat{H}^T\hat{H}x&=x^TH^THx+x^T(\hat{H}-H)^T(\hat{H}-H)x+2x^T(\hat{H}-H)^{T}Hx\\
    &\ge x^TH^THx-2|x^T(\hat{H}-H)^THx|\\
    &\ge \frac{3}{4}x^TH^THx-4x^T(\hat{H}-H)^T(\hat{H}-H)x\\
    &\ge \frac{3}{4}\rho_{\min}(H^TH)-4\rho\big((\hat{H}-H)^T(\hat{H}-H)\big).
\end{align*}
We use the fact that $\hat{H}-H=E$. Hence on $\mathcal{A}_{\delta}$ we have for all $x\in\RR^{K}$ with $x^Tx=1$,
$$x^T\hat{H}^T\hat{H}x\ge \frac{3}{4}\rho_{\min}(H^TH)-d_{\delta,K}\Psi, \text{by definition of } \mathcal{A}_{\delta}.$$
We get the desired result using \textbf{Assumption E.1}.
\end{proof}
\begin{proposition}\label{Prop:ConsequencesAssumption}
Under \textbf{Assumption E.1}, we get that, on $\mathcal{A}_{\delta}$,
\begin{align*}
    \rho(E^TE)&\le\frac{\rho_{\min}(H^TH)}{4},\\
    \rho(\hat{H}^T\hat{H}-H^TH)&\le 2\rho(H^TH).
\end{align*}
\end{proposition}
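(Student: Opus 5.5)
The first inequality follows directly from the definition of the event $\mathcal{A}_{\delta}$ together with \textbf{Assumption E.1}. On $\mathcal{A}_{\delta}$ we have $\rho(E^TE)\le d_{\delta,K}\Psi$. Assumption E.1 gives $d_{\delta,K}\Psi<\frac{1}{4}\rho_{\min}(H^TH)$. Chaining these two inequalities yields $\rho(E^TE)\le \rho_{\min}(H^TH)/4$, and no probabilistic argument is needed beyond the definition of the event.

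For the second inequality, I expand $\hat{H}^T\hat{H}-H^TH = H^TE+E^TH+E^TE$, which is symmetric. Since it is symmetric, its spectral radius equals its operator norm, and the triangle inequality gives
\begin{equation*}
\rho(\hat{H}^T\hat{H}-H^TH)\le 2|||E^TH|||+|||E^TE|||.
\end{equation*}
The cross term is controlled with $|||E^TH|||\le |||E|||\,|||H||| = \sqrt{\rho(E^TE)}\sqrt{\rho(H^TH)}$. Substituting the first inequality gives
\begin{equation*}
|||E^TH|||\le \tfrac12\sqrt{\rho_{\min}(H^TH)\rho(H^TH)}\le \tfrac12\rho(H^TH),
\end{equation*}
where the last step uses $\rho_{\min}(H^TH)\le\rho(H^TH)$. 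The quadratic term satisfies $|||E^TE|||=\rho(E^TE)\le \rho_{\min}(H^TH)/4\le \rho(H^TH)/4$. Adding the two contributions gives $\rho(\hat{H}^T\hat{H}-H^TH)\le \rho(H^TH)+\rho(H^TH)/4\le 2\rho(H^TH)$.

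There is no real obstacle here; the result is a deterministic consequence of the event. The only points to check are that the symmetric difference matrix lets us pass from $\rho$ to the operator norm, and that the bound $|||E^TH|||^2\le\rho(E^TE)\rho(H^TH)$ is applied correctly through submultiplicativity.
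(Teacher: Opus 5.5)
Your proof is correct and follows the paper's argument: the first bound is the same chaining of the defining property of $\mathcal{A}_{\delta}$ with Assumption E.1, and for the second the paper also expands $\hat{H}^T\hat{H}-H^TH=H^TE+E^TH+E^TE$. The only difference is that the paper controls the cross term through the quadratic-form inequality $2|x^TE^THx|\le x^TE^TEx+x^TH^THx$, reaching $\tfrac{3}{2}\rho(H^TH)$, whereas your submultiplicativity bound $|||E^TH|||\le\sqrt{\rho(E^TE)\rho(H^TH)}$ gives the slightly sharper $\tfrac{5}{4}\rho(H^TH)$.
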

\begin{proof}
On $\mathcal{A}_{\delta}$, we have,
\begin{align*}
\rho(E^TE)&\le d_{\delta,K}\Psi\\
&\le \frac{\rho_{\min}(H^TH)}{4},
\end{align*}
where the last step results from \textbf{Assumption E.1}. Then for $x\in\RR^{p}$ such that $x^Tx=1$ we have 
\begin{align*}
|x^T(\hat{H}^T\hat{H}-H^TH)x|&\le |x^T(H^TE+E^TH+E^TE)x|\\
&\le 2x^TE^TEx+x^TH^THx\\
&\le 2\frac{\rho_{\min}(H^TH)}{4}+\rho(H^TH)\\
&\le 2\rho(H^TH).
\end{align*}
\end{proof}

Now we give the proof of main theorem by regrouping the three different terms thanks to the consequences of \textbf{Assumption E.1}
\begin{proof}[Proof of \cref{Th:ProjOperatorSkeletonProof}]
In the following we shall bound these terms using $\mathrm{I}, \mathrm{II}$ and $\mathrm{III}$. First,
$$P_{[H]}(u)-P_{[\hat{H}]}(u)=H(H^TH)^{-1}H^Tu-\hat{H}(\hat{H}^T\hat{H})^{-1}\hat{H}^{T}u.$$
Hence we have 
$$P_{[H]}(u)-P_{[\hat{H}]}(u)=\big(H-\hat{H}\big)(\hat{H}^T\hat{H})^{-1}\hat{H}^Tu+H\big((H^TH)^{-1}H^Tu-(\hat{H}^T\hat{H})^{-1}\hat{H}^Tu\big),$$
Which leads to 
\begin{multline*}
P_{[H]}(u)-P_{[\hat{H}]}(u)=\\
\left(H-\hat{H}\right)(\hat{H}^T\hat{H})^{-1}\hat{H}^Tu+H\left((H^TH)^{-1}-(\hat{H}^T\hat{H})^{-1}\right)\hat{H}^Tu+H(H^TH)^{-1}\left(H^Tu-\hat{H}^Tu\right).
\end{multline*}
Then we compute
\begin{align}
    \|P_{H}(u)-P_{\hat{H}}(u)\|^2&\le4u^T\hat{H}(\hat{H}^T\hat{H})^{-1}(\hat{H}-H)^T(\hat{H}-H)(\hat{H}^T\hat{H})^{-1}\hat{H}^Tu\nonumber\\
    &\quad+4u^T\hat{H}\big((\hat{H}^T\hat{H})^{-1}-(H^TH)^{-1}\big)H^TH\big((\hat{H}^T\hat{H})^{-1}-(H^TH)^{-1}\big)\hat{H}^Tu\nonumber\\
    &\quad+2(H^Tu-\hat{H}^Tu)^T(H^TH)^{-1}H^TH(H^TH)^{-1}(H^Tu-\hat{H}^Tu)\nonumber\\
    &:=4T_{1}+4T_{2}+2\mathrm{III}.\label{Eq:ProjOperatorDecoupage}
\end{align}
We focus on $T_{1}$,
\paragraph{Term $T_1$=}
$u^T\hat{H}(\hat{H}^T\hat{H})^{-1}(\hat{H}-H)^T(\hat{H}-H)(\hat{H}^T\hat{H})^{-1}\hat{H}^Tu$.
\begin{align}
T_1&\le2u^T\hat{H}(H^TH)^{-1}(\hat{H}-H)^T(\hat{H}-H)(H^TH)^{-1}\hat{H}^Tu\nonumber\\
&\quad+2u^T\hat{H}((\hat{H}^T\hat{H})^{-1}-(H^TH)^{-1})(\hat{H}-H)^T(\hat{H}-H)((\hat{H}^T\hat{H})^{-1}-(H^TH)^{-1})\hat{H}^Tu\nonumber\\
&=:T_{11}+T_{12}.\label{Eq:ProjOperatora}
\end{align}
Let us bound $T_{12}$.
\begin{align*}
    \lefteqn{T_{12}\le}\\
    &2u^T\hat{H}(H^TH)^{-1}(\hat{H}^T\hat{H}-H^TH)(\hat{H}^T\hat{H})^{-1}E^TE(\hat{H}^T\hat{H})^{-1}(\hat{H}^T\hat{H}-H^TH)(H^TH)^{-1}\hat{H}^Tu\\
    &\le2\frac{\rho\big(E^TE\big)}{\rho_{\min}(\hat{H}^T\hat{H})}u^T\hat{H}(H^TH)^{-1}(\hat{H}^T\hat{H}-H^TH)(\hat{H}^T\hat{H})^{-1}(\hat{H}^T\hat{H}-H^TH)(H^TH)^{-1}\hat{H}^Tu\\
    &\le 4\frac{\rho\left(E^TE\right)}{\rho_{\min}(\hat{H}^T\hat{H})}u^TE(H^TH)^{-1}(\hat{H}^T\hat{H}-H^TH)(\hat{H}^T\hat{H})^{-1}(\hat{H}^T\hat{H}-H^TH)(H^TH)^{-1}E^Tu\\
    &\quad+4\frac{\rho\left(E^TE\right)}{\rho_{\min}(\hat{H}^T\hat{H})}u^TH(H^TH)^{-1}(\hat{H}^T\hat{H}-H^TH)(\hat{H}^T\hat{H})^{-1}(\hat{H}^T\hat{H}-H^TH)(H^TH)^{-1}H^Tu.
\end{align*}
Then we use the fact that $\rho\big(E^TE\big) \le \frac{\rho_{\min}(H^TH)}{4}$
and $\rho_{\min}(\hat{H}^T\hat{H})\ge \frac{\rho_{\min}(H^TH)}{2}$ on $\mathcal{A}_{\delta}$ thanks to \cref{Prop:ConsequenceAssInverse} and \cref{Prop:ConsequencesAssumption}.
We get 
$$\frac{\rho\big((\hat{H}-H)^T(\hat{H}-H)\big)}{\rho_{\min}(\hat{H}^T\hat{H})}\le \frac{\rho_{\min}(H^TH)}{4}\frac{2}{\rho_{\min}(H^TH)}\le \frac{1}{2}.$$
We use the inequality $\rho(\hat{H}^T\hat{H}-H^TH)\le 2\rho(H^TH)$ from \cref{Prop:ConsequencesAssumption} to obtain the following, 
\begin{align}
    \lefteqn{T_{12}=}\nonumber\\
    &\ 2(u^T\hat{H}-u^TH)(H^TH)^{-1}(\hat{H}^T\hat{H}-H^TH)(\hat{H}^T\hat{H})^{-1}(\hat{H}^T\hat{H}-H^TH)(H^TH)^{-1}(\hat{H}^Tu-H^Tu)\nonumber\\
    &\quad+2u^TH(H^TH)^{-1}(\hat{H}^T\hat{H}-H^TH)(\hat{H}^T\hat{H})^{-1}(\hat{H}^T\hat{H}-H^TH)(H^TH)^{-1}H^Tu\nonumber\\
    &\le2 \frac{\rho\big(\hat{H}^T\hat{H}-H^TH\big)^2}{\rho_{\min}(\hat{H}^T\hat{H})\rho_{\min}(H^TH)}(u^T\hat{H}-u^TH)(H^TH)^{-1}H^TH(H^TH)^{-1}(\hat{H}^Tu-H^Tu)\nonumber\\
    &\quad+2u^TH(H^TH)^{-1}(\hat{H}^T\hat{H}-H^TH)(\hat{H}^T\hat{H})^{-1}(\hat{H}^T\hat{H}-H^TH)(H^TH)^{-1}H^Tu\nonumber\\
    &\le 16 \frac{\rho(H^TH)^2}{\rho_{\min}(H^TH)^2}\mathrm{III}+2\frac{2}{\rho_{\min}(H^TH)}\mathrm{II},\label{Eq:ProjOperatorb}
\end{align}
where we recall that $\mathrm{III}=(u^TH-u^T\hat{H})(H^TH)^{-1}H^TH(H^TH)^{-1}(H^Tu-\hat{H}^Tu)$\\ and $\mathrm{II}=u^TH(H^TH)^{-1}(\hat{H}^T\hat{H}-H^TH)^{2}(H^TH)^{-1}H^Tu$.
Next,
\begin{align}
    T_{11}&=
    \ 2u^T\hat{H}(H^TH)^{-1}(\hat{H}-H)^T(\hat{H}-H)(H^TH)^{-1}\hat{H}^Tu\nonumber\\
    &\le 4u^TH(H^TH)^{-1}(\hat{H}-H)^{T}(\hat{H}-H)(H^TH)^{-1}H^Tu\nonumber\\
    &\quad+4(u^T\hat{H}-u^TH)(H^TH)^{-1}(\hat{H}-H)^T(\hat{H}-H)(H^TH)^{-1}(\hat{H}^Tu-H^Tu)\nonumber\\
    &\le 4\mathrm{I}+4\frac{\rho\big((\hat{H}-H)^T(\hat{H}-H)\big)}{\rho_{\min}(H^TH)}\mathrm{III}\nonumber\\
    &\le 4\mathrm{I} + \mathrm{III}\label{Eq:ProjOperatorc}
\end{align}
\paragraph{Final bound on $T_{1}$}
We deduce from \eqref{Eq:ProjOperatora}, \eqref{Eq:ProjOperatorb} and \eqref{Eq:ProjOperatorc} that
\begin{equation}\label{Eq:ProjOperatorBorneT1}
    T_{1}\le 4\mathrm{I} + \big(1+16\mathrm{Cond}(H^TH)\big)\mathrm{III}+ 4\rho_{\min}(H^TH)^{-1}\mathrm{II}.
\end{equation}
\paragraph{Term $T_2$}
We now focus on $T_{2}=u^T\hat{H}\big((\hat{H}^T\hat{H})^{-1}-(H^TH)^{-1}\big)H^TH\big((\hat{H}^T\hat{H})^{-1}-(H^TH)^{-1}\big)\hat{H}^Tu$.
\begin{align*}
    \lefteqn{T_{2}=}\\
    &\ 2u^TH\big((\hat{H}^T\hat{H})^{-1}-(H^TH)^{-1}\big)H^TH\big((\hat{H}^T\hat{H})^{-1}-(H^TH)^{-1}\big)H^Tu\\
    &  \quad+ 2(u^T\hat{H}-u^TH)\big((\hat{H}^T\hat{H})^{-1}-(H^TH)^{-1}\big)H^TH\big((\hat{H}^T\hat{H})^{-1}-(H^TH)^{-1}\big)(\hat{H}^Tu-H^Tu)\\
    &\le 2u^TH(H^TH)^{-1}\big(\hat{H}^T\hat{H}-H^TH\big)(\hat{H}^T\hat{H})^{-1}H^TH(\hat{H}^T\hat{H})^{-1}\big(\hat{H}^T\hat{H}-H^TH\big)(H^TH)^{-1}H^Tu\\
    & \quad+ 2 u^TE(H^TH)^{-1}(\hat{H}^T\hat{H}-H^TH)(\hat{H}^T\hat{H})^{-1}H^TH(\hat{H}^T\hat{H})^{-1}(\hat{H}^T\hat{H}-H^TH)(H^TH)^{-1}E^Tu\\
    &\le 2\frac{\rho(H^TH)}{\rho_{\min}(\hat{H}^T\hat{H})^2}u^TH(H^TH)^{-1}(\hat{H}^T\hat{H}-H^TH)^2(H^TH)^{-1}H^Tu\\
    & \quad+ 2 \frac{\rho(H^TH)}{\rho_{\min}(\hat{H}^T\hat{H})^{2}}\frac{\rho\big(\hat{H}^T\hat{H}-H^TH\big)^2}{\rho_{\min}(H^TH)}\mathrm{III}\\
    &\le 8\frac{\rho(H^TH)}{\rho_{\min}(H^TH)^{2}}\mathrm{II}+32\frac{\rho(H^TH)^{3}}{\rho_{\min}(H^TH)^{3}}\mathrm{III}.
\end{align*}
\paragraph{Final bound on $T_{2}$}
We deduce that
\begin{equation}\label{Eq:ProjOperatorBorneT2}
    T_{2}\le8\mathrm{Cond}(H^TH)\rho_{\min}(H^TH)^{-1}\mathrm{II}+32\mathrm{Cond}(H^TH)^{3}\mathrm{III}.
\end{equation}
\paragraph{Final Bound.}
Using \eqref{Eq:ProjOperatorBorneT2}, \cref{Eq:ProjOperatorBorneT1} and \eqref{Eq:ProjOperatorDecoupage}, we get the following final bound
\begin{multline}
\|(P_{[\hat{H}]}-P_{[H]})(u)\|^{2}\le\\ 
16\mathrm{I}+ (16+32\mathrm{Cond}(H^TH))\rho_{\min}(H^TH)^{-1}\mathrm{II}+(5+4\cdot16\mathrm{Cond}(H^TH)+4\cdot32\mathrm{Cond}(H^TH)^{3})\mathrm{III}.
\end{multline}
This concludes the proof.
\end{proof}

\subsection{Preliminaries for each scenario}\label{Sec:ProjOperatorAdapt}
\cref{Th:ProjOperatorSkeletonProof} provides an upper bound on $\frac{1}{n}\|P_{[H]}(u)-P_{[\hat{H}]}(u)\|^{2}$ depending of three main terms up to \textbf{Assumption E.1}. Thanks to results in Appendix \ref{Sec:ProjOperatorPrelim} we will state the assumptions for each scenario and set the parameter $\Psi$ related to \textbf{Assumption E.1}  as $\Psi_{S_{i}}$ for the $i$-th Scenario introduced in Appendix \ref{Sec:ProjOperatorSkeleton}. 

\subsubsection{Scenario 1 and 2}
Scenario 1 can be seen as a direct consequence of Scenario 2 with $S=I_{K}$.
In order to define the main assumption for Scenario 2, we have to define the function $\Psi_{S_{2}}$. To do this, we have to bound with high probability the quantity $\rho(E^TE)$. The gram matrix $E^TE$ contains all the scalar products between the columns of $E$ which are $E_{\cdot,j}\sim\mathcal{N}(0,\gamma^{2}S_{jj}I_{n})$. Thanks to \cref{Prop:VershyninRows}, we have on the set $\mathcal{A}_{\delta}$,
$$\rho(E^TE)\le \tilde{D}_{\delta}\gamma^{2}n\rho(S),$$
with $\tilde{D}_{\delta}$ explicit constant depending on $\delta$. We set $\Psi_{S_{2}}=\gamma^{2}n\rho(S)$ and $d_{\delta}=\tilde{D}_{\delta}$ to define the following assumption.
\paragraph{Assumption for Scenario 2.}\textit{The matrix $H^TH$  verifies $\frac{1}{4}\rho_{\min}(H^TH)>d_{\delta}n\gamma^{2}\rho(S)$, for $d_{\delta}$ depending only on $\delta$.} 

This property then applies for Scenario 1 where $\Psi_{S_{1}}=\gamma^{2}n$ with the following assumption.
\paragraph{Assumption for Scenario 1.}\textit{The matrix $H^TH$  verifies $\frac{1}{4}\rho_{\min}(H^TH)>d_{\delta}n\gamma^{2}$, for $d_{\delta}$ depending only on $\delta$.} 

These assumptions ensures the invertibility of $(\hat{H}^T\hat{H})^{-1}$ thanks to \cref{Prop:ConsequenceAssInverse}. 

\subsubsection{Scenario 3}
In Scenario 3 we have for $j\in\{1,...,K\},$ $E_{\cdot,j}\sim\mathcal{N}(0,\gamma^{2}A)$ iid. Unfortunately the rows of $E$ are not iid but using the fact that $\rho(EE^{T})=\rho(E^TE)$ we consider $EE^T$ as $(E^T)^TE^T$ where the rows of $E^T$ are iid. 
Considering $E^T\in\RR^{K\times n}$ we can apply the deviation bounds from \cref{Prop:VershyninRows} and get that, on $\mathcal{A}_{\delta}$,
$$|||\frac{1}{K}EE^T-\gamma^{2}A|||\le D_{\delta}\gamma^{2}\rho(A)\big(\frac{n}{K}\big).$$
Normalizing by $\frac{K}{n}$, we obtain the following
$$|||\frac{1}{n}EE^T-\gamma^{2}\frac{K}{n}A|||\le D_{\delta}\gamma^{2}\rho(A).$$
Hence on $\mathcal{A}_{\delta}$,
$$\rho(E^TE)\le  \tilde{D}_{\delta}n\gamma^{2}\rho(A),$$
with $\tilde{D}_{\delta}$ depending of $\delta$. To conclude, we set $d_{\delta}=\tilde{D}_{\delta}$ and $\Psi_{S_{3}}=n\gamma^{2}\rho(A)$.

\paragraph*{Assumption for Scenario 3.}\textit{The matrix $H^TH$  verifies $\frac{1}{4}\rho_{\min}(H^TH)>d_{\delta}n\gamma^{2}\rho(A)$, for $d_{\delta}$ depending only on $\delta$.}
\subsubsection{Scenario 4}\label{SubsubsecScen4}
We now define the function $\Psi_{S_{4}}$.
Thanks to deviation results (see \cref{Prop:DeviationResult}) on $\|E_{\cdot,j}\|^{2}$ we have for each $j\in\{1,...,K\}$,
$$\mathbb{P}\left(\|E_{\cdot,j}\|^{2}\ge g_{\delta}\gamma^{2}\mathrm{Tr}(A_{j}VA_{j}^{T})\right)\le \delta,$$ 
with $g_{\delta}$ depending on $\delta$. By regrouping the $K$ different events for each column of $E$ and replacing $\delta$ by $\frac{\delta}{K}$ we obtain an event $\mathcal{A}_{\delta,K}$ with $\mathbb{P}\left(\mathcal{A}_{\delta,K}\right)\ge 1-\delta$ where we get
\begin{align*}
\rho(E^TE)&\le \mathrm{Tr}(E^TE)\\
&\le\sum_{j=1}^{K}\|E_{\cdot,j}\|^{2}\\
&\le C_{\delta,K}\gamma^{2}\sum_{j=1}^{K}\mathrm{Tr}(A_{j}VA_{j}^{T}),
\end{align*}
with $C_{\delta,K}=c_{\delta}\ln(\frac{K}{\delta})$. We set $d_{\delta,K}=C_{\delta,K}$ and $\Psi_{S_{4}}=\gamma^{2}\sum_{j=1}^{K}\mathrm{Tr}(A_{j}VA_{j}^{T})$.
\paragraph*{Assumption for Scenario 4.}\textit{The matrix $H^TH$  verifies $$\frac{1}{4}\rho_{\min}(H^TH)>d_{\delta}\gamma^{2}\sum_{j=1}^{K}\mathrm{Tr}(A_{j}VA_{j}^{T}),$$ for $d_{\delta,K}$ depending only on $\delta$ and $\ln(\frac{K}{\delta})$.}

\paragraph*{}Now that the assumptions are made explicit, to conclude the proofs of the four theorems stated in \cref{Subsec:ProjOperatorEstimation}, we just need to bound the three mains terms $\mathrm{I},\mathrm{II}$ and $\mathrm{III}$ introduced in \cref{Th:ProjOperatorSkeletonProof} according to each Scenario. This part will be the focus of the next subsection.
\subsection[]{Bounds on the terms $\mathrm{I},\mathrm{II}$ and $\mathrm{III}$}
Thanks to \cref{Sec:ProjOperatorAdapt} we have set $d_{\delta,K}$ and $\Psi$ according to each scenario in order to apply \cref{Th:ProjOperatorSkeletonProof}. The final step is to upper bound the terms $\mathrm{I},\mathrm{II}$ and $\mathrm{III}$ for each scenario, taking the supremum on all $u$ such that $u^Tu=1$.  We propose a bound for the three terms for each scenario. Let us recall that terms $\mathrm{I}, \mathrm{II}$ and $\mathrm{III}$ are defined in \eqref{Eq:ProjOperatorI}, \eqref{Eq:ProjOperatorII} and \eqref{Eq:ProjOperatorIII}. 
\paragraph*{}All along this proof, we will use \cref{Prop:DeviationResult} several times (this number is independent of $n$ and $K$). For each use we will replace $\delta$ by $\frac{\delta}{2}$ in the event $\mathcal{A}_{\delta}$ in \cref{Def:EventA} in order to create a new event which will be the intersection of $\mathcal{A}_{\delta}$ and the one created by \cref{Prop:DeviationResult}. This allows to control simultaneously have the property of \cref{Prop:DeviationResult} and the inner property of $\mathcal{A}_{\delta}$ on $\rho(E^TE)$ with probability $1-\delta$. 

\paragraph*{The case of term $\mathrm{II}$} The term $\mathrm{II}$ is more complex than terms $\mathrm{I}$ and $\mathrm{III}$ and will be handled as follows. First, recall that $\Lambda_{u}=(H^TH)^{-1}H^Tu$. We have,
\begin{align*}
    \|(\hat{H}^T\hat{H}-H^TH)\Lambda_u\|^2&=\|(E^TH+H^TE+E^TE)\Lambda_u\|^2\\
    &\le2\|(E^TH+H^TE)\Lambda_u\|^2+2\|E^TE\Lambda_u\|^2\\
    &\le 4\|E^TH\Lambda_u\|^2+4\|H^TE\Lambda_u\|^2+2\|E^TE\Lambda_u\|^2.
\end{align*}
Hence, 
\begin{equation}\label{Eq:ProjOperatorDecompII}
\mathrm{II}\le 4\|E^TH\Lambda_u\|^2+4\|H^TE\Lambda_u\|^2+2\|E^TE\Lambda_u\|^2.
\end{equation}
Term $\mathrm{II}$ will be handled after these three terms have been bounded for each Scenario. 
\subsubsection{Scenario 1 and 2}\label{Subsub:Scen12}
We consider $E\in\RR^{n\times K}$ where $E_{i,\cdot}^{T}\sim\mathcal{N}(0,\gamma^{2}S)$ for all $i\in\{1,...,n\}$ with $S\in\RR^{K\times K}$. For Scenario 1, we consider $S=I_{K}$.
\subsubsection*{Scenario 1 and 2: Term I}
Let us recall that $\Lambda_{u}$ is defined in \eqref{Eq:ProjOperatorLambda}. 
We have $\EE[E\Lambda_u]=0$ with $(E\Lambda_{u})_i=\sum_{k=1}^{K}E_{ik}(\Lambda_u)_{k}$, every coordinate of $E\Lambda_{u}$ is Gaussian and the $(E_{i,\cdot})_{i=1}^{n}$ are independent. Then $E\Lambda_{u}$ is a Gaussian vector and
\begin{align*}
\EE[(E\Lambda_u)_i^{2}]&=\EE\bigg[(\sum_{k=1}^{K}E_{ik}(\Lambda_u)_k)^2\bigg]\\
&=\EE\big[\sum_{j,k}^{K}E_{ik}E_{ij}(\Lambda_u)_{k}(\Lambda_u)_{j}\big]\\
&=\gamma^{2}\sum_{j,k=1}^{K}\EE\big[E_{ij}E_{ik}(\Lambda_u)_{k}(\Lambda_{u})_{j}\big]\\
&=\gamma^{2}\Lambda_{u}^TS\Lambda_{u}.
\end{align*}
Then $E\Lambda_u\sim\mathcal{N}(0,\gamma^{2}(\Lambda_{u}^TS\Lambda_u)I_{n})$.
Thanks to the deviation result stated in \cref{Prop:DeviationResult}, we have on the set $\mathcal{A}_{\delta}$, 
$$\mathrm{I}\le C_{\delta}n\gamma^{2}\Lambda_u^TS\Lambda_u.$$
Then we consider the operator norm, 
\begin{align*}
    \underset{u\in\mathbb{R}^{n},u^Tu=1}{\sup}\mathrm{I}&\le \underset{u\in\mathbb{R}^{n},u^Tu=1}{\sup}C_{\delta}n\gamma^{2}\ \Lambda_{u}^TS\Lambda_{u}\\
    &\le \underset{u\in\mathbb{R}^{n},u^Tu=1}{\sup}C_{\delta}n\gamma^{2}u^TH(H^TH)^{-1}S(H^TH)^{-1}H^Tu\\
    &\le C_{\delta}n\gamma^{2}\rho\bigg(H(H^TH)^{-1}S(H^TH)^{-1}H^T\bigg)\\
    &\le C_{\delta}n\gamma^{2}\rho\bigg((H(H^TH)^{-1}S^{\frac{1}{2}})(H(H^TH)^{-1}S^{\frac{1}{2}})^T\bigg)\\
    &\le C_{\delta} n\gamma^{2}\rho\bigg(S^{\frac{1}{2}}(H^TH)^{-1}S^{\frac{1}{2}}\bigg)\\
    &\le C_{\delta}n\gamma^{2} \rho(S (H^TH)^{-1}).
\end{align*}
We deduce 
\begin{equation}\label{Eq:ProjOperatorS12TermI}
    \underset{u\in\RR^{n},\|u\|=1}{\sup}\mathrm{I}\le C_{\delta}n\gamma^{2}\rho(S(H^TH)^{-1}).
\end{equation}
\subsubsection*{Scenarios 1 and 2: Term $\mathrm{II}$}
The vectors $E^TH\Lambda_u$ and $H^TE\Lambda_u$ follow centered multivariate normal distributions. First, $$\mathrm{Var}(H^TE\Lambda_{u})=H^T\mathrm{Var}(E\Lambda_{u})H=\gamma^{2}\Lambda_{u}^TS\Lambda_{u}H^TH.$$ We have $(E^TH\Lambda_u)_i=E_{\cdot,i}^TH\Lambda_{u}$, We can then compute 
$$\EE[E_{\cdot,i}^TH\Lambda_uE_{\cdot,j}^TH\Lambda_{u}]=\gamma^{2}\Lambda_{u}^TH^TH\Lambda_{u}S_{ij}.$$
Hence we get
$\mathrm{Var}(E^TH\Lambda_u)=\gamma^{2}\|H\Lambda_u\|^2S$. 
However $E^TE\Lambda_u$ is not Gaussian.
Another way to choose for the last term is to consider $\|E^TE\Lambda_u\|^{2}$ as the following
\begin{align*}
    \|E^TE\Lambda_{u}\|^{2}&=\Lambda_{u}^{T}E^TEE^TE\Lambda_{u}\\
    &\le \rho(EE^T)\|E\Lambda_{u}\|^{2}\\
    &\le \rho(E^TE)\|E\Lambda_{u}\|^{2}\\
    &\le \tilde{D}_{\delta}\gamma^{2}n\rho(S)\|E\Lambda_{u}\|^{2}\\
    &\le \frac{1}{4}\frac{\tilde{D}_{\delta}}{d_{\delta}}\rho_{\min}(H^TH)\|E\Lambda_{u}\|^{2}\\
    &\le \frac{1}{4}\frac{C_{\delta}\tilde{D}_{\delta}}{d_{\delta}}\rho_{\min}(H^TH)\gamma^{2}n\Lambda_{u}^TS\Lambda_{u}.
\end{align*}
Where we use the fact that $\rho(EE^T)=\rho(E^TE)$ and where the last step results from the deviation result of \cref{Prop:DeviationResult} applied with \textbf{Assumption E.1} in Scenario 2. By this way, we keep the quantity $\Lambda_{u}^TS\Lambda_{u}$ which also appears in the first term $\mathrm{I}$. Combining the three terms thanks to the deviation result of \cref{Prop:DeviationResult}, we get the following inequalities on the set $\mathcal{A}_{\delta}$:
\begin{align*}
    \|H^TE\Lambda_{u}\|^2&\le C_{\delta}\gamma^{2}\Lambda_{u}^TS\Lambda_{u}\mathrm{Tr}(H^TH),\\
    \|E^TH\Lambda_{u}\|^{2}&\le C_{\delta}\gamma^{2}\|H\Lambda_{u}\|^{2}\mathrm{Tr}(S),\\
    \|E^TE\Lambda_{u}\|^{2}&\le \frac{1}{4}\frac{C_{\delta}\tilde{D}_{\delta}}{d_{\delta}}\gamma^{2}n\rho_{\min}(H^TH)\Lambda_{u}^TS\Lambda_{u}.
\end{align*}
Finally, thanks to the decomposition \eqref{Eq:ProjOperatorDecompII}, we get
$$\|(\hat{H}^T\hat{H}-H^TH)\Lambda_{u}\|^{2}\le C_{\delta}\gamma^{2}\bigg(\Lambda_{u}^TS\Lambda_{u}\mathrm{Tr}(H^TH)+\|H\Lambda_{u}\|^{2}\mathrm{Tr}(S)+\frac{1}{4}\frac{D_{\delta}}{d_{\delta}}n\rho_{\min}(H^TH)\Lambda_{u}^TS\Lambda_{u}\bigg).$$

We now consider the supremum on $\mathrm{II}$.
We focus on the term $\rho_{\min}(H^TH)^{-1}\mathrm{II}$. We have, 
\begin{align*}
    \rho_{\min}(H^TH)^{-1}\mathrm{II}&=\rho_{\min}(H^TH)^{-1}\|(\hat{H}^T\hat{H}-H^TH)\Lambda_{u}\|^{2}\\
    &\le \frac{C_{\delta}\gamma^{2}}{\rho_{\min}(H^TH)}\bigg(\|H\Lambda_{u}\|^{2}\mathrm{Tr}(S)+\left(\frac{1}{4}\frac{\tilde{D}_{\delta}}{d_{\delta}}n\rho_{\min}(H^TH)+\mathrm{Tr}(H^TH)\right)\Lambda_{u}^TS\Lambda_{u}\bigg)\\
    &\le C_{\delta}\gamma^{2}\left(\Lambda_{u}^TS\Lambda_{u}\frac{\mathrm{Tr}(H^TH)}{\rho_{\min}(H^TH)}+\|H\Lambda_{u}\|^{2}\frac{\mathrm{Tr}(S)}{\rho_{\min}(H^TH)}+\frac{1}{4}\frac{\tilde{D}_{\delta}}{d_{\delta}}n\Lambda_{u}^TS\Lambda_{u}\right)\\
    &\le C_{\delta}\gamma^{2}\left(\mathrm{Cond}(H^TH)K\Lambda_{u}^TS\Lambda_{u}+\|H\Lambda_{u}\|^{2}\frac{\mathrm{Tr}(S)}{\rho_{\min}(H^TH)}+\frac{1}{4}\frac{\tilde{D}_{\delta}}{d_{\delta}}n\Lambda_{u}^TS\Lambda_{u}\right).
\end{align*}
We deduce 
\begin{multline}
\underset{u\in\RR^{n},\|u\|=1}{\sup} \big(\rho_{\min}(H^TH)^{-1}\mathrm{II}\big)\le C_{\delta}\gamma^{2}\bigg(K\rho(S(H^TH)^{-1})\mathrm{Cond}(H^TH)\\+\frac{\mathrm{Tr}(S)}{\rho_{\min}(H^TH)}+\frac{1}{4}\frac{\tilde{D}_{\delta}}{d_{\delta}}n\rho(S(H^TH)^{-1})\bigg).\label{Eq:ProjOperatorS12TermII}
\end{multline}
\subsubsection*{Scenario 1 and 2: Term $\mathrm{III}$}
We focus for $u\in\RR^{n}$ on $\mathrm{III}=\|H(H^TH)^{-1}E^{T}u\|^2$. The vector $H(H^TH)^{-1}E^Tu$ is centered and satisfies 
\begin{align*}
\mathrm{Var}\big(H(H^TH)^{-1}E^Tu\big)&=H(H^TH)^{-1}\mathrm{Var}\big(E^Tu\big)(H^TH)^{-1}H^T\\
&=\gamma^{2}\|u\|^2H(H^TH)^{-1}S(H^TH)^{-1}H^T.
\end{align*}
Thanks to the deviation result of \cref{Prop:DeviationResult} we get 
$$\|H(H^TH)^{-1}E^Tu\|^2\le C_{\delta}\gamma^{2}\|u\|^2\mathrm{Tr}\big((H^TH)^{-1}S\big).$$
The matrix $(H^TH)^{-1}$ is positive definite. Hence, there exists $P$ such that $(H^TH)^{-1}=PDP^{T}$ with $PP^T=I_{K}$ and $D$ positive diagonal matrix. Then we have 
$$\mathrm{Tr}\big((H^TH)^{-1}S\big)\le \mathrm{Tr}\big(PDP^{T}S\big)\le \mathrm{Tr}\big(DP^{T}S P\big)\le \underset{1\le i\le K}{\max}(D_{i}) \mathrm{Tr}(S)\le \frac{\mathrm{Tr}(S)}{\rho_{\min}(H^TH)}.$$
Actually, following the previous inequalities, we can prove that for any symmetric positive matrices $M$ and $N$ we have 
\begin{equation}\label{Eq:ProjOperatorTraceTrick}
\mathrm{Tr}(MN)\le \rho(M)\mathrm{Tr}(N).
\end{equation}
This result result will be used various times in the following.
By taking the supremum on all normalized $u$, we get 
\begin{align*}
\mathrm{III}&=\|H(H^TH)^{-1}E^Tu\|^{2}
&\le C_{\delta}\gamma^{2}\mathrm{Tr}\bigg((H^TH)^{-1}S\bigg)\\
&\le C_{\delta}\gamma^{2}\rho_{\min}(H^TH)^{-1}\mathrm{Tr}\big(S\big).
\end{align*}
We deduce 
\begin{equation}
    \underset{u\in\RR^{n},\|u\|=1}{\sup}\mathrm{III}\le C_{\delta}\gamma^{2}\rho_{\min}(H^TH)^{-1}\mathrm{Tr}(S).\label{Eq:ProjOperatorS12TermIII}
\end{equation}

This concludes the proof of \cref{Th:ProjOperatorRows} and \cref{Th:ProjOperatorIndependent}, by setting $S=I_{K}$ and by using \eqref{Eq:ProjOperatorS12TermI}, \eqref{Eq:ProjOperatorS12TermII} and \eqref{Eq:ProjOperatorS12TermIII} and the decomposition in \cref{Th:ProjOperatorSkeletonProof}.
We now perform the same analysis for Scenario 3.

\subsubsection{Scenario 3}
Now we focus on Scenario 3 where the columns of $E$ are independent. We recall that $\Lambda_{u}$ is defined in \eqref{Eq:ProjOperatorLambda} and $\mathrm{I}, \mathrm{II}$ and $\mathrm{III}$ are given in \eqref{Eq:ProjOperatorI}, \eqref{Eq:ProjOperatorII} and \eqref{Eq:ProjOperatorIII}.
\subsubsection*{Scenario 3: Term $\mathrm{I}$}
We keep the same notations as for Scenario 2. 
Under Scenario 3, we have
$E\Lambda_{u}\sim\mathcal{N}(0,\gamma^{2}\|\Lambda_{u}\|^{2}A)$.
Thanks to the deviation result of \cref{Prop:DeviationResult} we get $$\|(\hat{H}-H)(H^TH)^{-1}H^Tu\|^{2}\le C_{\delta}\gamma^{2}\mathrm{Tr}(A)\|\Lambda_{u}\|^{2}.$$
By taking all normalized $u\in\RR^{n}$
we get 
\begin{equation}\label{Eq:ProjOperatorS3TermI}
\underset{u\in\RR^{n}, \|u\|=1}{\sup}\mathrm{I}\le C_{\delta}\gamma^{2}\frac{\mathrm{Tr}(A)}{\rho_{\min}(H^TH)}.
\end{equation}
\subsubsection*{Scenario 3: Term $\mathrm{II}$}
We use the decomposition of $\mathrm{II}$ in three terms seen previously in \cref{Eq:ProjOperatorDecompII}. 
For the last term we have,
$$\|E^TE\Lambda_{u}\|^{2}\le \rho(EE^{T})\|E\Lambda_{u}\|^{2}\le \rho(E^TE)C_{\delta}\gamma^{2}\|\Lambda_{u}\|^{2}\mathrm{Tr}(A).$$
Hence, $$\|E^TE\Lambda_{u}\|^{2}\le \frac{C_{\delta}\tilde{D}_{\delta}}{d_{\delta}}\frac{\rho_{\min}(H^TH)}{4}\gamma^{2}\|\Lambda_{u}\|^{2}\mathrm{Tr}(A).$$
We now focus on terms $\|E^TH\Lambda_{u}\|^{2}$ and $\|H^TE\Lambda_{u}\|^{2}$.
For $H^TE\Lambda_{u}$, we have 
$$\mathrm{Var}(H^TE\Lambda_{u})=H^T\mathrm{Var}(E\Lambda_{u})H=\gamma^{2}H^TAH\|\Lambda_{u}\|^{2}.$$
Thanks to the deviation result of \cref{Prop:DeviationResult}, we deduce that
$$\|H^TE\Lambda_{u}\|^{2}\le C_{\delta}\gamma^{2}\|\Lambda_{u}\|^{2}\mathrm{Tr}(H^TAH).$$
Next, we have $(E^TH\Lambda_{u})_{i}=E_{\cdot,i}^TH\Lambda_{u}$. Then 
$$\EE[\Lambda_{u}^TH^T E_{\cdot,i}E_{\cdot,j}^{T}H\Lambda_{u}]=\gamma^{2}\delta_{ij}\Lambda_{u}^TH^TAH\Lambda_{u}.$$
Hence, $\mathrm{Var}(E^TH\Lambda_{u})=\gamma^{2}\Lambda_{u}^TH^TAH\Lambda_{u}I_{K},$ and
$$\|E^TH\Lambda_{u}\|^{2}\le C_{\delta}\gamma^{2}K\Lambda_{u}^{T}H^TAH\Lambda_{u}.$$
Combining these three different terms, we get that, on $\mathcal{A}_{\delta},$
\begin{align*}
    \|E^TE\Lambda_{u}\|^{2}&\le\frac{C_{\delta}\tilde{D}_{\delta}}{d_{\delta}}\frac{\rho_{\min}(H^TH)}{4}\gamma^{2}\|\Lambda_{u}\|^{2}\mathrm{Tr}(A),\\
    \|H^TE\Lambda_{u}\|^{2}&\le C_{\delta}\gamma^{2}\|\Lambda_{u}\|^{2}\mathrm{Tr}(H^TAH),\\
    \|E^TH\Lambda_{u}\|^{2}&\le C_{\delta}\gamma^{2}K\Lambda_{u}^TH^TAH\Lambda_{u}.
\end{align*}
Finally, on $\mathcal{A}_{\delta}$,
$$\mathrm{II}\le C_{\delta}\gamma^{2}\bigg(K\Lambda_{u}^TH^TAH\Lambda_{u}+\|\Lambda_{u}\|^{2}\mathrm{Tr}(H^TAH)+\frac{\tilde{D}_{\delta}}{d_{\delta}}\frac{\rho_{\min}(H^TH)}{4}\|\Lambda_{u}\|^{2}\mathrm{Tr}(A)\bigg).$$
Let us study $\rho_{\min}(H^TH)^{-1}\mathrm{II}$ which appears in \cref{Th:ProjOperatorCollumns}. We have
\begin{align}\label{Eq:ProjOperatoraprime}
    \rho_{\min}(H^TH)^{-1}\mathrm{II}&\le\bigg(C_{\delta}\gamma^{2}K\frac{\rho\big(A\big)}{\rho_{\min}(H^TH)}+C_{\delta}\gamma^{2}\frac{\mathrm{Tr}(H^TAH)}{\rho_{\min}(H^TH)^{2}}+\frac{C_{\delta}\tilde{D}_{\delta}}{d_{\delta}}\frac{1}{4}\gamma^{2}\frac{\mathrm{Tr}(A)}{\rho_{\min}(H^TH)}\bigg),
\end{align}
where we used the equality
$$\rho(H(H^TH)^{-1}H^TAH(H^TH)^{-1}H^T)=\rho\big((A^{\frac{1}{2}}H(H^TH)^{-1}H^{T}A^{\frac{1}{2}})\big).$$
We clearly have $$\rho\big(A^{\frac{1}{2}}H(H^TH)^{-1}H^TA^{\frac{1}{2}}\big)\le \rho(A)\rho\big(H(H^TH)^{-1}H^T\big)\le \rho(A).$$
Then, for the second term with $HH^T=P^TD P$ with $A$ symmetric positive we have thanks to \eqref{Eq:ProjOperatorTraceTrick}, 
\begin{equation}\label{Eq:ProjOperatorbprime}
\mathrm{Tr}(H^TAH)=\mathrm{Tr}(HH^TA)\le\rho(HH^T)\mathrm{Tr}(A).
\end{equation}
We conclude from \eqref{Eq:ProjOperatoraprime} and \eqref{Eq:ProjOperatorbprime}, on $\mathcal{A}_{\delta}$
\begin{equation}\label{Eq:ProjOperatorS3TermII}
\underset{u\in\mathbb{R}^{n},\|u\|=1}{\sup}\rho_{\min}(H^TH)^{-1}\mathrm{II}\le F_{\delta}\gamma^{2}\mathrm{Cond}(H^TH)\max\bigg(K\frac{\rho(A)}{\rho_{\min}(H^TH)},\frac{\mathrm{Tr}(A)}{\rho_{\min}(H^TH)}\bigg),
\end{equation}
where $F_{\delta}$ is a constant depending on $d_{\delta},\tilde{D}_{\delta}$ and $C_{\delta}$.
\subsubsection*{Scenario 3: Term $\mathrm{III}$}
We focus on the term $\mathrm{III}=\|H(H^TH)^{-1}E^Tu\|^{2}$, the associated variance matrix is $$H(H^TH)^{-1}\mathrm{Var}(E^Tu)(H^TH)^{-1}H^T=H(H^TH)^{-1}\gamma^{2}u^TAu(H^TH)^{-1}H^T.$$
Thanks to the deviation result given in \cref{Prop:DeviationResult}, we deduce
$$\mathrm{III}\le C_{\delta}\gamma^{2}u^TAu\mathrm{Tr}((H^TH)^{-1}).$$
Taking the supremum on all normalized $u\in\RR^{n}$, on $\mathcal{A}_{\delta}$,
\begin{equation}\label{Eq:ProjOperatorS3TermIII}
\underset{u\in\RR^{n},\|u\|=1}{\sup} \mathrm{III} \le C_{\delta}\gamma^{2}\rho(A)\mathrm{Tr}\big((H^TH)^{-1}\big).
\end{equation}
This concludes the proof of \cref{Th:ProjOperatorCollumns} by using \eqref{Eq:ProjOperatorS3TermI}, \eqref{Eq:ProjOperatorS3TermII} and \eqref{Eq:ProjOperatorS3TermIII} and the decomposition of \cref{Th:ProjOperatorSkeletonProof}.
\subsubsection{Scenario 4}\label{Subsub:Scen4}
We recall that $\Lambda_{u}$ is defined in \eqref{Eq:ProjOperatorLambda} and $\mathrm{I}, \mathrm{II}$ and $\mathrm{III}$ are given in \eqref{Eq:ProjOperatorI}, \eqref{Eq:ProjOperatorII} and \eqref{Eq:ProjOperatorIII}.
\subsubsection*{Scenario 4: Term $\mathrm{I}$}
We consider Scenario 4 where for all $j\in\{1,...,K\}$, $E_{\cdot,j}\sim\mathcal{N}(0,A_{j}VA_{j}^{T})$. We have
\begin{align*}
    E\Lambda_{u}&=\sum_{j=1}^{K}E_{\cdot,j}(\Lambda_{u})_{j}\\
    &= \sum_{j=1}^{K}A_{j}(\hat{v}-v)(\Lambda_{u})_{j}\\
    &= \bigg(\sum_{j=1}^{K}A_{j}(\Lambda_{u})_{j}\bigg)(\hat{v}-v).
\end{align*}
Hence, 
$$\mathrm{Var}(E\Lambda_{u})=\gamma^{2}\sum_{l,m=1}^{K}(\Lambda_{u})_{l}(\Lambda_{u})_{m}A_{l}VA_{m}^{T}.$$
Thanks to the deviation result of \cref{Prop:DeviationResult}, we obtain that on $\mathcal{A}_{\delta}$,
\begin{align*}
\|E\Lambda_{u}\|^{2}&\le C_{\delta,K}\gamma^{2}\sum_{l,m=1}^{K}(\Lambda_{u})_{m}(\Lambda_{u})_{l}\mathrm{Tr}\big(A_{l}VA_{m}^{T}\big)\\
&\le C_{\delta,K}\gamma^{2}\bigg(\sum_{l=1}^{K}(\Lambda_{u})_{l}\sqrt{\mathrm{Tr}(A_{l}^TVA_{l})}\bigg)^{2}\\ 
&\le \gamma^{2}C_{\delta,K}\|\Lambda_{u}\|^{2}\sum_{l=1}^{K}\mathrm{Tr}(A_{l}^TVA_{l}),
\end{align*}
where $C_{\delta,K}$ is depending only on $\delta$ and $\ln(\frac{K}{\delta})$. Thus, on $\mathcal{A}_{\delta}$,
$$\mathrm{I}\le \gamma^{2}C_{\delta,K}\|\Lambda_{u}\|^{2}\sum_{l=1}^{K}\mathrm{Tr}(A_{l}^TA_{l}V).$$
We take the supremum on all normalized $u\in\RR^{n}$ to get on $\mathcal{A}_{\delta}$
\begin{equation}\label{Eq:ProjOperatorS4TermI}
\underset{u\in\mathbb{R}^{n}, u^Tu=1}{\sup}\mathrm{I}\le C_{\delta,K}\gamma^{2}\frac{\sum_{l=1}^{K}\mathrm{Tr}(A_{l}^{T}A_{l}V)}{\rho_{\min}(H^TH)}.
\end{equation}
\subsubsection*{Scenario 4: Term $\mathrm{II}$}
We recall the decomposition \eqref{Eq:ProjOperatorDecompII} of $\mathrm{II}$ in three terms,
\begin{equation}\label{Eq:ProjOperatoraseconde}
\mathrm{II}\le 4\|E^TH\Lambda_u\|^2+4\|H^TE\Lambda_u\|^2+2\|E^TE\Lambda_u\|^2.
\end{equation}
\paragraph*{Term $\|E^TE\Lambda_{u}\|^{2}$.}Under Scenario 4, on $\mathcal{A}_{\delta}$, 
\begin{align}
\|E^TE\Lambda_{u}\|^{2}&\le \rho(EE^T)\|E\Lambda_{u}\|^{2}\nonumber\\
&\le \rho(E^TE) C_{\delta,K}\gamma^{2}\sum_{l=1}^{K}\mathrm{Tr}\big(A_{l}VA_{l^{T}}\big)\nonumber\\
&\le \frac{C_{\delta,K}}{d_{\delta,K}}\frac{\rho_{\min}(H^TH)}{4}\|\Lambda_{u}\|^{2}\sum_{l=1}^{K}\mathrm{Tr}\big(A_{l}VA_{l}^{T}\big).\label{Eq:ProjOperatorbseconde}
\end{align}

\paragraph*{Term $\|H^TE\Lambda_{u}\|^{2}$.}Using $\mathrm{Var}(H^TE\Lambda_{u})=H^T\mathrm{Var}(E\Lambda_{u})H$, we have, on $\mathcal{A}_{\delta}$,
\begin{align*}
\|H^TE\Lambda_{u}\|^{2}&\le C_{\delta,K}\gamma^{2}\sum_{l,m=1}^{K}(\Lambda_{u})_{l}(\Lambda_{u})_{m}\mathrm{Tr}(HH^TA_{l}VA_{m}^{T})\\
&\le C_{\delta,K}\gamma^{2}\|\Lambda_{u}\|^{2}\sum_{l=1}^{K}\mathrm{Tr}(HH^TA_{l}VA_{l}^{T}).
\end{align*}

Using the fact that $HH^{T}$ and $A_{l}VA_{l}^{T}$ are positive symmetric matrices, thanks to \eqref{Eq:ProjOperatorTraceTrick} we get, on $\mathcal{A}_{\delta}$,
\begin{equation}\label{Eq:ProjOperatorcseconde}
\|H^TE\Lambda_{u}\|^{2}\le C_{\delta,K}\gamma^{2}\rho(H^TH)\|\Lambda_{u}\|^{2}\sum_{l=1}^{K}\mathrm{Tr}(A_{l}VA_{l}^{T}).
\end{equation}

\paragraph*{Term $\|E^TH\Lambda_{u}\|^{2}$.}
We have 
\begin{align*}
\EE[(E^TH\Lambda_{u})_{i}(E^TH\Lambda_{u})_{j}]&=\EE[\Lambda_{u}^{T}H^TE_{\cdot,i}E_{\cdot,j}^{T}H\Lambda_{u}]\\
&=\Lambda_{u}^TH^T\EE[E_{\cdot,i}E_{\cdot,j}^{T}]H\Lambda_{u}\\
&=\Lambda_{u}^TH^TA_{i}\EE[(\hat{v}-v)(\hat{v}-v)^T]A_{j}^{T}H\Lambda_{u}\\
&=\Lambda_{u}^TH^TA_{i}\EE[(\hat{v}-v)(\hat{v}-v)^T]A_{j}^{T}H\Lambda_{u}\\
&=\gamma^{2}\Lambda_{u}^TH^TA_{i}VA_{j}^{T}H\Lambda_{u}.
\end{align*}
Applying the deviation result of \cref{Prop:DeviationResult} we have, on $\mathcal{A}_{\delta}$,
$$\|E^TH\Lambda_{u}\|^{2}\le C_{\delta,K}\gamma^{2}\sum_{i=1}^{K}\Lambda_{u}^TH^TA_{i}VA_{i}^TH\Lambda_{u}.$$
Then, we obtain 
\begin{align*}
\Lambda_{u}^TH^TA_{i}VA_{i}^{T}H\Lambda_{u}&=\mathrm{Tr}(H\Lambda_{u}\Lambda_{u}^{T}H^{T}A_{i}VA_{i}^{T})\\
&\le \rho(H\Lambda_{u}\Lambda_{u}^TH^T)\mathrm{Tr}(A_{i}VA_{i}^{T})\\
&\le \|H\Lambda_{u}\|^{2}\mathrm{Tr}(A_{i}VA_{i}^{T}),
\end{align*}
where we used the fact that $H\Lambda_{u}\Lambda_{u}^{T}H^T$ and $A_{i}VA_{i}^{T}$ are symmetric positive matrices in order to use \cref{Eq:ProjOperatorTraceTrick}.
Hence, 
\begin{equation}\label{Eq:ProjOperatordseconde}
\|E^TH\Lambda_{u}\|^{2}\le C_{\delta,K}\gamma^{2}\|H\Lambda_{u}\|^{2}\sum_{l=1}^{K}\mathrm{Tr}(A_{i}VA_{i}^{T}).
\end{equation}

We consider $\rho_{\min}(H^TH)^{-1}\mathrm{II}$. Using decomposition \eqref{Eq:ProjOperatoraseconde} and inequalities \eqref{Eq:ProjOperatorbseconde}, \eqref{Eq:ProjOperatorcseconde} and \eqref{Eq:ProjOperatordseconde}, we obtain, on $\mathcal{A}_{\delta}$,
\begin{equation}\label{Eq:ProjOperatorS4TermII}
\underset{u\in\mathbb{R}^{n}, u^Tu=1}{\sup}\rho_{\min}(H^TH)^{-1}\mathrm{II}\le \bigg(C_{\delta,K}\big(1+\mathrm{Cond}(H^TH)+\frac{1}{d_{\delta,K}}\big)\gamma^{2}\frac{\sum_{l=1}^{K}\mathrm{Tr}(A_{l}^TA_{l}V)}{\rho_{\min}(H^TH)}\bigg).
\end{equation}
\subsubsection*{Scenario 4: Term $\mathrm{III}$}
We focus on Term $\|H(H^TH)^{-1}E^Tu\|^{2}$. The associated variance matrix is $$\mathrm{Var}(H(H^TH)^{-1}E^Tu)=H(H^TH)^{-1}\mathrm{Var}(E^Tu)(H^TH)^{-1}H^T.$$
Then, $\mathrm{Tr}(\mathrm{Var}(H(H^TH)^{-1}E^Tu))=\mathrm{Tr}\big((H^TH)^{-1}B\big)$ where $B_{ij}=\gamma^{2}u^TA_{i}VA_{j}^Tu$.
Thanks to \cref{Eq:ProjOperatorTraceTrick}, we obtain
$$\mathrm{Tr}\big((H^TH)^{-1}B\big)\le \rho_{\min}(H^TH)^{-1}\mathrm{Tr}(B).$$
It yields
$$\mathrm{III}\le C_{\delta,K}\gamma^{2}\rho_{min}(H^TH)^{-1}\|u\|^{2}\sum_{l=1}^{K}\mathrm{Tr}(A_{i}VA_{i}^{T}).$$
Considering $u\in\mathbb{R}^{n}$ such that $u^Tu=1$, we get on $\mathcal{A}_{\delta}$
\begin{equation}\label{Eq:ProjOperatorS4TermIII}
\underset{u\in\mathbb{R}^{n}, u^Tu=1}{\sup}\mathrm{III}\le C_{\delta,K}\gamma^{2}\frac{\sum_{l=1}^{K}\mathrm{Tr}(A_{l}^TA_{l}V)}{\rho_{\min}(H^TH)}.
\end{equation}
This concludes the proof of \cref{Th:ProjOperatorGeneralized} using the decomposition of \cref{Th:ProjOperatorSkeletonProof} and inequalities \eqref{Eq:ProjOperatorS4TermI}, \eqref{Eq:ProjOperatorS4TermII} and \eqref{Eq:ProjOperatorS4TermIII}.

\section{Proof for the regularized versions}\label{Sec:ProjOperatorRegularizedProof}
As in Appendix \ref{Sec:ProjOperatorSkeleton} we propose a general proof scheme for \cref{Th:ProjOperatorIndependentPenalinise}, \cref{Th:ProjOperatorRowsPenalinise}, \cref{Th:ProjOperatorColumnsPenalinise} and \cref{Th:ProjOperatorGeneralizedPenalinise} for the regularized versions of projection operators.  

\subsection{Technical results}
We recall that for $\Delta_{\alpha}=\alpha I_{K}$ with $(H^TH)_{\alpha}=(H^TH+\Delta_{\alpha})$. We have the following result.
\begin{lem}\label{Lemma:ProjOperatorAlpha}
For $\alpha\ge0$ and $(H^TH)_{\alpha}=H^TH+\Delta_{\alpha}$, we have 
$$\rho_{\min}\left((H^TH)_{\alpha}\right)\ge \rho_{\min}(H^TH)+\alpha.$$
\end{lem}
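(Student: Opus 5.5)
The plan is to argue through the Rayleigh quotient characterization of the smallest eigenvalue. Since $H^TH$ is symmetric positive semi-definite and $\Delta_{\alpha}=\alpha I_{K}$ is symmetric, the matrix $(H^TH)_{\alpha}=H^TH+\alpha I_{K}$ is symmetric. Its smallest eigenvalue is therefore
$$\rho_{\min}\big((H^TH)_{\alpha}\big)=\min_{x\in\RR^{K},\,x^Tx=1}x^T(H^TH+\alpha I_{K})x.$$

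First, fix $x\in\RR^{K}$ with $x^Tx=1$ and expand:
$$x^T(H^TH)_{\alpha}x=x^TH^THx+\alpha x^Tx=x^TH^THx+\alpha.$$
Second, bound the first term below by $\rho_{\min}(H^TH)$, again using the Rayleigh quotient, this time for $H^TH$. This gives $x^T(H^TH)_{\alpha}x\ge\rho_{\min}(H^TH)+\alpha$ for every unit vector $x$. Taking the minimum over $x$ yields the claim.

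In fact equality holds, since the eigenvectors of $H^TH$ are also eigenvectors of $H^TH+\alpha I_{K}$, with eigenvalues shifted by $\alpha$. The inequality as stated is all that is needed for the regularized proofs.

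No step is a genuine obstacle. The only point to check is that the Rayleigh characterization applies, and it does because both matrices are symmetric. The hypothesis $\alpha\ge0$ is not even required for this identity; it matters only later, to guarantee that $(H^TH)_{\alpha}$ is invertible even when $H^TH$ is singular.
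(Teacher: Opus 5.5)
Your argument is correct, and it in fact gives the equality $\rho_{\min}\big((H^TH)_{\alpha}\big)=\rho_{\min}(H^TH)+\alpha$. The paper states this lemma without proof, and your Rayleigh-quotient computation is exactly the technique it uses for the analogous eigenvalue bounds in \cref{Prop:ConsequenceAssInverse} and \cref{Prop:EigenControl}.

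One small slip in your closing remark: $\alpha\ge 0$ alone does not make $(H^TH)_{\alpha}$ invertible when $H^TH$ is singular (take $\alpha=0$); that would require $\alpha>0$. This does not affect the lemma itself, and in the paper $H$ is assumed to have full rank anyway.
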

\begin{definition}\label{Def:EventB}
Let $\delta\in(0,1)$. We define an event $\mathcal{B}_{\delta}$ satisfying $\mathbb{P}(\mathcal{B}_{\delta})\ge 1-\delta$, 
on which we have $\sum_{j=1}^{K}\|E_{\cdot,j}\|^{2}\le d_{\delta,K}\Psi$ where $d_{\delta,K}$ depends on $\delta$ and $K$, and where $\Psi\ge0$. 
\end{definition}
The parameters $d_{\delta,K}$ and $\Psi$ remain unchanged in the regularized versions. They will be equal to those defined in \cref{Sec:ProjOperatorAdapt} according to the scenario.
\begin{proposition}\label{Prop:EigenControl}
Let $\Delta_{\alpha}=\alpha I_{K}$ with $\alpha=2d_{\delta,K}\Psi$. We have, on the event $\mathcal{B}_{\delta}$,
\begin{align}
    \rho_{\min}\big((\hat{H}^T\hat{H})_{\alpha}\big)&\ge \frac{\rho_{\min}(H^TH)}{2},\label{ProjOperatorRa}\\
    \rho\big((\hat{H}^T\hat{H})_{\alpha}-(H^TH)_{\alpha}\big)&\le 
    2(\alpha+\rho(H^TH)),\label{Eq:ProjOperatorRb}\\
    \rho(E^TE)&\le \frac{\alpha}{2},\label{Eq:ProjOperatorRc}\\
    \frac{\rho\big((\hat{H}^T\hat{H})_{\alpha}-(H^TH)_{\alpha}\big)^{2}}{\rho_{\min}\big((H^TH)_{\alpha}\big)^{2}}&\le 32\mathrm{Cond}(H^TH)^{2}.\label{Eq:ProjOperatorRd}
\end{align}
\end{proposition}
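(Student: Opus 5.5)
The four inequalities are deterministic consequences of the single bound $\sum_{j=1}^{K}\|E_{\cdot,j}\|^{2}\le d_{\delta,K}\Psi=\alpha/2$ that holds on $\mathcal{B}_{\delta}$. I would prove them in the order \eqref{Eq:ProjOperatorRc}, \eqref{ProjOperatorRa}, \eqref{Eq:ProjOperatorRb}, \eqref{Eq:ProjOperatorRd}, because \eqref{Eq:ProjOperatorRc} feeds into the two inequalities after it. Throughout, I work with quadratic forms $x^TMx$ for unit vectors $x\in\RR^{K}$, as in the proofs of \cref{Prop:ConsequenceAssInverse} and \cref{Prop:ConsequencesAssumption}.

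\textbf{Step 1, inequality \eqref{Eq:ProjOperatorRc}.} Since $E^TE$ is positive semidefinite,
\begin{equation*}
\rho(E^TE)\le\mathrm{Tr}(E^TE)=\sum_{j=1}^{K}\|E_{\cdot,j}\|^{2}\le d_{\delta,K}\Psi=\frac{\alpha}{2}.
\end{equation*}
In particular, $\|Ex\|^{2}\le\alpha/2$ for every unit vector $x$.

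\textbf{Step 2, inequality \eqref{ProjOperatorRa}.} Instead of the cruder split used in \cref{Prop:ConsequenceAssInverse}, I would use the elementary inequality $\|a+b\|^{2}\ge\frac12\|a\|^{2}-\|b\|^{2}$, which follows from $2|a^Tb|\le\frac12\|a\|^{2}+2\|b\|^{2}$. For a unit vector $x$ this gives
\begin{equation*}
x^T(\hat{H}^T\hat{H}+\alpha I_{K})x=\|Hx+Ex\|^{2}+\alpha\ge\frac12\rho_{\min}(H^TH)-\frac{\alpha}{2}+\alpha\ge\frac12\rho_{\min}(H^TH).
\end{equation*}
This is exactly where the choice $\alpha=2d_{\delta,K}\Psi$ is used: the ridge term absorbs the noise, so \textbf{Assumption E.1} is not needed.

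\textbf{Step 3, inequality \eqref{Eq:ProjOperatorRb}.} The ridge terms cancel, so
\begin{equation*}
(\hat{H}^T\hat{H})_{\alpha}-(H^TH)_{\alpha}=H^TE+E^TH+E^TE,
\end{equation*}
which is symmetric. It therefore suffices to bound $|x^T(\cdot)x|$ over unit vectors $x$. Using $2|x^TH^TEx|\le\|Hx\|^{2}+\|Ex\|^{2}$,
\begin{equation*}
|x^T(H^TE+E^TH+E^TE)x|\le\|Hx\|^{2}+2\|Ex\|^{2}\le\rho(H^TH)+\alpha\le2(\alpha+\rho(H^TH)).
\end{equation*}

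\textbf{Step 4, inequality \eqref{Eq:ProjOperatorRd}.} By \cref{Lemma:ProjOperatorAlpha}, $\rho_{\min}((H^TH)_{\alpha})\ge\rho_{\min}(H^TH)+\alpha$. Combining with Step 3, the ratio is at most
\begin{equation*}
4\left(\frac{\alpha+\rho(H^TH)}{\alpha+\rho_{\min}(H^TH)}\right)^{2}.
\end{equation*}
The map $t\mapsto(t+a)/(t+b)$ with $a\ge b>0$ is nonincreasing on $[0,\infty)$, so this fraction is at most its value at $t=0$, namely $\mathrm{Cond}(H^TH)$. This yields the bound $4\,\mathrm{Cond}(H^TH)^{2}\le32\,\mathrm{Cond}(H^TH)^{2}$.

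No step is a real obstacle. The only point needing care is Step 2: the splitting constant must be chosen so that the noise contribution is at most $\alpha$, and the split $\frac12,1$ above does this with room to spare.
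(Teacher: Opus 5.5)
Your proof is correct and follows the paper's argument: quadratic forms over unit vectors, a Young-type split of the cross term $2x^TH^TEx$, the trace bound $\rho(E^TE)\le\sum_{j}\|E_{\cdot,j}\|^{2}\le\alpha/2$ on $\mathcal{B}_{\delta}$, and the shift $\rho_{\min}((H^TH)_{\alpha})\ge\rho_{\min}(H^TH)+\alpha$. Your refinements are minor but worthwhile: bounding $|x^T(\cdot)x|$ in \eqref{Eq:ProjOperatorRb} is the right thing to do because the difference matrix is indefinite (the paper only bounds it from above), and the monotonicity of $t\mapsto(t+a)/(t+b)$ gives $4\,\mathrm{Cond}(H^TH)^{2}$ in \eqref{Eq:ProjOperatorRd} in place of the paper's cruder chain to $32\,\mathrm{Cond}(H^TH)^{2}$.
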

\begin{proof}
First,
\begin{align*}
    x^T(\hat{H}^T\hat{H})_{\alpha}x&=x^TH^THx+x^T\Delta_{\alpha}x+x^TE^TEx+2x^T(\hat{H}-H)^THx\\
    &\ge x^TH^THx+x^T\Delta_{\alpha} x-2|x^T(\hat{H}-H)^THx|\\
    &\ge \frac{1}{2}x^TH^THx+x^T\Delta_{\alpha}x-2x^TE^TEx\\
    &\ge \frac{\rho_{\min}(H^TH)}{2}+x^T\bigg(\Delta_{\alpha}-2E^TE\bigg)x.
\end{align*}
We can use the following inequality:
\begin{align*}
    2x^TE^TEx&=2\sum_{i,j=1}^{K}x_{i}x_{j}E_{\cdot,i}^TE_{\cdot,j}\\
    &\le2\sum_{i,j=1}^{K}|x_{i}||x_{j}|\|E_{\cdot,i}\|\|E_{\cdot,j}\|\\
    &\le 2\bigg(\sum_{i=1}^{K}|x_{i}|\|E_{\cdot,i}\|\bigg)^{2}\\
    &\le 2\sum_{i=1}^{K}\|E_{\cdot,i}\|^{2}\\
    &\le 2d_{\delta,K}\Psi.
\end{align*}
If $\alpha_i=2d_{\delta,K}\Psi$ it ensures the positivity of $x^T\bigg(\Delta_{\alpha}-2E^TE\bigg)x$ and consequently on the event $\mathcal{B}_{\delta}$, we deduce inequality \eqref{ProjOperatorRa} and inequality \eqref{Eq:ProjOperatorRc}
For the inequality \eqref{Eq:ProjOperatorRb}, we have 
\begin{align*}
    x^T\big((\hat{H}^T\hat{H})_{\alpha}-(H^TH)_{\alpha}\big)x&\le  x^T(E^TH+H^TE)x+x^TE^TEx\\
    &\le (1+\frac{1}{2})x^TE^TEx+ 2x^TH^THx\\
    &\le 2(\alpha + \rho(H^TH)). 
\end{align*}
Hence we finally get inequality \eqref{Eq:ProjOperatorRd} by
\begin{align*}
    \frac{\rho\big((\hat{H}^T\hat{H})_{\alpha}-(H^TH)_{\alpha}\big)^{2}}{\rho_{\min}\big((H^TH)_{\alpha}\big)^{2}}&\le 4 \frac{(\alpha+\rho(H^TH))^{2}}{\big(\alpha+\rho_{\min}(H^TH)\big)^2}\\
    &\le 8 \frac{\alpha^{2}+\rho(H^TH)^{2}}{\big(\alpha+\rho_{\min}(H^TH)\big)^{2}}\\
    &\le 8\big(1+\mathrm{Cond}(H^TH))^{2}\\
    &\le 32 \mathrm{Cond}(H^TH)^{2}.
\end{align*}
This concludes the proof.
\end{proof}
\subsection{Bias induced by regularization}\label{Subsec:ProjOperatorBiasRegularized}
The regularization induces a natural bias in the estimation of the projection operator. 
We use the following decomposition:
$$P_{[H]}-P_{[\hat{H}]_{\alpha}}=P_{[H]}-P_{[H]_{\alpha}}+P_{[H]_{\alpha}}-P_{[\hat{H}]_{\alpha}},$$
where $P_{[H]}=H(H^TH)^{-1}H^T, P_{[H]_{\alpha}}=H(H^TH)_{\alpha}^{-1}H^T$ and $P_{[\hat{H}]_{\alpha}}=\hat{H}(\hat{H}^T\hat{H})_{\alpha}^{-1}\hat{H}^T$.
We focus on $|||P_{[H]}-P_{[H]_{\alpha}}|||^{2}$ with $P_{[H]_{\alpha}}=H(H^TH)_{\alpha}^{-1}H^T$. Let $u\in\RR^{n}$ such that $u^Tu=1$, we define $\Lambda_{u}=(H^TH)^{-1}H^Tu$.
\begin{proposition}
Let $\alpha\in \RR_{+},$ we have 
$$|||P_{[H]_{\alpha}}-P_{[H]}|||^{2}\le \frac{\alpha}{\rho_{\min}(H^TH)}.$$
\end{proposition}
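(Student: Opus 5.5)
The plan is to diagonalize both operators simultaneously through a singular value decomposition of $H$; then the difference $P_{[H]}-P_{[H]_{\alpha}}$ becomes an explicit diagonal matrix and its operator norm can be read off directly. Since $H\in\RR^{n\times K}$ has full rank $K$, write the thin SVD $H=UDW^T$ with $U\in\RR^{n\times K}$ satisfying $U^TU=I_K$, $W\in\RR^{K\times K}$ orthogonal, and $D=\mathrm{diag}(d_1,\dots,d_K)$ with $d_i>0$. The eigenvalues of $H^TH=WD^2W^T$ are then the $d_i^2$, so $\rho_{\min}(H^TH)=\min_i d_i^2$.

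The first step is to compute both operators in this basis. Since $(H^TH)^{-1}=WD^{-2}W^T$, we get $P_{[H]}=UDW^TWD^{-2}W^TWDU^T=UU^T$. Likewise $(H^TH)_{\alpha}=W(D^2+\alpha I_K)W^T$, so
$$P_{[H]_{\alpha}}=U\,D^2(D^2+\alpha I_K)^{-1}U^T.$$
Taking the difference,
$$P_{[H]}-P_{[H]_{\alpha}}=U\,\mathrm{diag}\Big(\frac{\alpha}{d_i^2+\alpha}\Big)U^T.$$
This is symmetric positive semidefinite. Because $U$ has orthonormal columns, its operator norm is $\max_i \frac{\alpha}{d_i^2+\alpha}=\frac{\alpha}{\rho_{\min}(H^TH)+\alpha}$. (The same formula can be reached without the SVD through the resolvent identity $(H^TH)^{-1}-(H^TH)_{\alpha}^{-1}=\alpha(H^TH)^{-1}(H^TH)_{\alpha}^{-1}$, which gives $P_{[H]}-P_{[H]_{\alpha}}=\alpha\,H(H^TH)^{-1}(H^TH)_{\alpha}^{-1}H^T$. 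This is the route that fits the paper's $\Lambda_u$ notation, and \cref{Lemma:ProjOperatorAlpha} supplies the eigenvalue bound.)

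The last step is to square. Set $q=\frac{\alpha}{\rho_{\min}(H^TH)+\alpha}$, so that $|||P_{[H]_{\alpha}}-P_{[H]}|||=q$. Since $\alpha\ge 0$ we have $q\in[0,1)$, hence $q^2\le q$, and also $q\le \frac{\alpha}{\rho_{\min}(H^TH)}$. Together these give
$$|||P_{[H]_{\alpha}}-P_{[H]}|||^2\le \frac{\alpha}{\rho_{\min}(H^TH)},$$
which is the claim, and in fact the sharper bound $q^2$ holds.

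There is no real obstacle here. The one point that needs care is that a squared norm is being compared with a bound that is linear in $\alpha$, and this comparison only works because the norm is at most $1$. For that, the difference must be identified as a positive semidefinite contraction, rather than bounded crudely by the triangle inequality $|||P_{[H]}|||+|||P_{[H]_{\alpha}}|||$, which would lose the factor $\alpha$.
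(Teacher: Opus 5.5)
Your proof is correct, and it reaches the result by a different route than the paper. The paper never diagonalizes $H$. It argues vector by vector, starting from the resolvent identity you mention in your parenthetical. For $\|u\|=1$ it writes $(P_{[H]_{\alpha}}-P_{[H]})(u)=-\alpha H(H^TH)_{\alpha}^{-1}\Lambda_u$ and then chains
$$\alpha^{2}\|H(H^TH)_{\alpha}^{-1}\Lambda_u\|^{2}\le\alpha^{2}\rho\big((H^TH)_{\alpha}^{-1}H^TH\big)\rho_{\min}\big((H^TH)_{\alpha}\big)^{-1}\|\Lambda_u\|^{2}\le\alpha\|\Lambda_u\|^{2}\le\frac{\alpha}{\rho_{\min}(H^TH)}.$$
This uses $\rho\big((H^TH)_{\alpha}^{-1}H^TH\big)\le 1$, $\rho_{\min}\big((H^TH)_{\alpha}\big)\ge\alpha$ and $\sup_{\|u\|=1}\|\Lambda_u\|^{2}=\rho_{\min}(H^TH)^{-1}$. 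The step $\alpha^{2}\rho_{\min}\big((H^TH)_{\alpha}\big)^{-1}\le\alpha$ plays the role of your observation $q\le 1$.

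The chain bounds $\rho\big((H^TH)_{\alpha}^{-1}H^TH(H^TH)_{\alpha}^{-1}\big)$ crudely by $1/\alpha$, and separately from $\|\Lambda_u\|^{2}$, so it only yields $\alpha/\rho_{\min}(H^TH)$. Your SVD computation identifies the norm exactly as $\alpha/(\rho_{\min}(H^TH)+\alpha)$. It therefore gives the sharper bound $\alpha^{2}/(\rho_{\min}(H^TH)+\alpha)^{2}$, which is quadratic rather than linear in $\alpha$ when $\alpha\ll\rho_{\min}(H^TH)$.

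The paper's version has the advantage of using only the $\Lambda_u$ and $\rho/\rho_{\min}$ manipulations it employs throughout the appendix for $\mathrm{I}_\alpha$, $\mathrm{II}_\alpha$, $\mathrm{III}_\alpha$. Moreover, in the regularized theorems the bias contribution $\alpha/(n\rho_{\min}(H^TH))$ is already at most of the order of the stated rates. So the extra sharpness of your bound does not change the final results.
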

\begin{proof} Let $u\in\mathbb{R}^{n},$ such that $u^Tu=1$. We have
\begin{align*}
    \|(P_{[H]_{\alpha}}(u)-P_{[H]})(u)\|^{2}&\le \|H\big((H^TH)_{\alpha}^{-1}-(H^TH)^{-1}\big)H^Tu\|^{2}\\
    &\le \|H(H^TH)_{\alpha}^{-1}\big((H^TH)_{\alpha}-(H^TH)\big)(H^TH)^{-1}H^Tu\|^{2}\\
    &\le \|H(H^TH)_{\alpha}^{-1}\Delta_{\alpha}\Lambda_{u}\|^{2}\\
    &\le \alpha^{2}\|H(H^TH)_{\alpha}^{-1}\Lambda_{u}\|^{2}\\
    &\le \alpha^{2}\rho\bigg((H^TH)_{\alpha}H^TH(H^TH)_{\alpha}^{-1}\bigg)\|\Lambda_{u}\|^{2}\\
    &\le\alpha^{2}\rho\bigg((H^TH)_{\alpha}^{-1}H^TH\bigg)\rho_{\min}\big((H^TH)_{\alpha}\big)^{-1}\|\Lambda_{u}\|^{2}\\
    &\le \alpha\|\Lambda_{u}\|^{2}\\
    &\le \frac{\alpha}{\rho_{\min}(H^TH)}.
\end{align*}
Where the last step results from taking the $\sup$ on all $u$ with $u^Tu=1$.
\end{proof}
\subsection{Main decomposition for the regularized theorems}
We introduce the three mains terms which will bound the desired quantity thanks to our decomposition:
\begin{align}
    \mathrm{I}_{\alpha}&=u^TH(H^TH)_{\alpha}^{-1}(\hat{H}-H)^T(\hat{H}-H)(H^TH)_{\alpha}^{-1}H^Tu,\label{Eq:ProjOperatorTermIAlph}\\
    \mathrm{II}_{\alpha}&=u^TH(H^TH)_{\alpha}^{-1}\left((\hat{H}^T\hat{H})_{\alpha}-(H^TH)_{\alpha}\right)^{2}(H^TH)_{\alpha}^{-1}H^Tu,\label{Eq:ProjOperatorTermIIAlph}\\
    \mathrm{III}_{\alpha}&=(u^TH-u^T\hat{H})(H^TH)_{\alpha}^{-1}(H^Tu-\hat{H}^Tu).\label{Eq:ProjOperatorTermIIIAlph}
\end{align}
The first term corresponds to a "linear term" where we consider that regularized coordinate \begin{equation}\label{Eq:ProjOperatorLambdaRegularized}
\Lambda_{u,\alpha}=(H^TH)_{\alpha}^{-1}H^T,
\end{equation}
is known, and hence where the estimation is focused on the difference between $H$ and $\hat{H}$. The second term is the term induced by the estimation of the inverse of the Gram matrix $H^TH$. The last term is the one corresponding to the estimated coordinates of $u$ in the regularized basis $H$.
We also consider 
\begin{equation}\label{Eq:ProjOperatorTermTildeIII}
\tilde{\mathrm{III}}_{\alpha}=(u^TH-u^T\hat{H})(H^TH)_{\alpha}^{-1}H^TH(H^TH)_{\alpha}^{-1}(H^Tu-\hat{H}^Tu).
\end{equation}
This term is directly related to $\mathrm{III}_{\alpha}$.
\begin{theorem}\label{Th:ProjOperatorSkeletonProofRegularized}
Let $\delta\in(0,1)$, and $\alpha=2d_{\delta,K}$. We have, for $u\in\RR^{K}$, on the event $\mathcal{B}_{\delta}$,
\begin{equation}
    \|P_{[H]_{\alpha}}(u)-P_{[\hat{H}]_{\alpha}}(u)\|^{2}\le \tilde{C}_{1}\mathrm{I}_{\alpha}+\tilde{C}_{2}\mathrm{Cond}(H^TH)\frac{1}{\rho_{\min}\left((\hat{H}^T\hat{H})_{\alpha}\right)}\mathrm{II}_{\alpha}+\tilde{C}_{3}\mathrm{Cond}(H^TH)^{3}\mathrm{III}_{\alpha}.
\end{equation}
\end{theorem}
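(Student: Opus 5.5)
The plan is to repeat the proof of \cref{Th:ProjOperatorSkeletonProof} with every Gram matrix replaced by its regularized version. The facts previously supplied by \textbf{Assumption E.1} (\cref{Prop:ConsequenceAssInverse}, \cref{Prop:ConsequencesAssumption}) will now come from \cref{Prop:EigenControl} on $\mathcal{B}_{\delta}$, with $\alpha=2d_{\delta,K}\Psi$. For $u$ with $u^Tu=1$, first write the same three-term telescoping identity:
\begin{multline*}
P_{[H]_{\alpha}}(u)-P_{[\hat H]_{\alpha}}(u)=(H-\hat H)(\hat H^T\hat H)_{\alpha}^{-1}\hat H^Tu\\
+H\big((H^TH)_{\alpha}^{-1}-(\hat H^T\hat H)_{\alpha}^{-1}\big)\hat H^Tu+H(H^TH)_{\alpha}^{-1}(H^Tu-\hat H^Tu).
\end{multline*}
This gives $\|\cdot\|^2\le 4T_{1,\alpha}+4T_{2,\alpha}+2\tilde{\mathrm{III}}_{\alpha}$. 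The last term satisfies $\tilde{\mathrm{III}}_{\alpha}\le \rho\big((H^TH)_{\alpha}^{-1/2}H^TH(H^TH)_{\alpha}^{-1/2}\big)\mathrm{III}_{\alpha}\le\mathrm{III}_{\alpha}$, since $H^TH\preceq (H^TH)_{\alpha}$.

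Next, treat $T_{1,\alpha}$ as in the unregularized proof. Insert $(H^TH)_{\alpha}^{-1}$ in place of $(\hat H^T\hat H)_{\alpha}^{-1}$ and split $\hat H=H+E$ in the outer factors. The main piece gives $\mathrm{I}_{\alpha}$. The pieces involving $E^Tu$ are bounded by $\rho(E^TE)\,\rho_{\min}((H^TH)_{\alpha})^{-1}\mathrm{III}_{\alpha}\le \tfrac{\alpha/2}{\alpha}\mathrm{III}_{\alpha}$, using \eqref{Eq:ProjOperatorRc} and \cref{Lemma:ProjOperatorAlpha}. For the resolvent difference, use
\[
(\hat H^T\hat H)_{\alpha}^{-1}-(H^TH)_{\alpha}^{-1}=-(\hat H^T\hat H)_{\alpha}^{-1}\big((\hat H^T\hat H)_{\alpha}-(H^TH)_{\alpha}\big)(H^TH)_{\alpha}^{-1}.
\]
Its contribution is then controlled by $\rho(E^TE)/\rho_{\min}((\hat H^T\hat H)_{\alpha})$ times either $\mathrm{II}_{\alpha}/\rho_{\min}((\hat H^T\hat H)_{\alpha})$ (when $H^Tu$ sits on both sides) or $\rho\big((\hat H^T\hat H)_{\alpha}-(H^TH)_{\alpha}\big)^2\rho_{\min}((H^TH)_{\alpha})^{-1}\mathrm{III}_{\alpha}$ (when $E^Tu$ does).

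$T_{2,\alpha}$ is handled the same way. After the resolvent identity, the $H^Tu$ part is bounded by $\rho(H^TH)\rho_{\min}((\hat H^T\hat H)_{\alpha})^{-2}\mathrm{II}_{\alpha}$. Writing $\rho(H^TH)/\rho_{\min}((\hat H^T\hat H)_{\alpha})\le 2\mathrm{Cond}(H^TH)$ via \eqref{ProjOperatorRa} yields the announced $\mathrm{Cond}(H^TH)\rho_{\min}((\hat H^T\hat H)_{\alpha})^{-1}\mathrm{II}_{\alpha}$ term. The $E^Tu$ part is bounded by $\rho(H^TH)\rho_{\min}((\hat H^T\hat H)_{\alpha})^{-2}\rho(\hat\Delta)^2\rho_{\min}((H^TH)_{\alpha})^{-1}\mathrm{III}_{\alpha}$, with $\hat\Delta=(\hat H^T\hat H)_{\alpha}-(H^TH)_{\alpha}$. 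Here \eqref{Eq:ProjOperatorRd} converts $\rho(\hat\Delta)^2/\rho_{\min}((H^TH)_{\alpha})^2$ into $32\,\mathrm{Cond}(H^TH)^2$, and \eqref{ProjOperatorRa} together with \cref{Lemma:ProjOperatorAlpha} controls the remaining ratio $\rho(H^TH)\rho_{\min}((H^TH)_{\alpha})/\rho_{\min}((\hat H^T\hat H)_{\alpha})^2$ by a multiple of $\mathrm{Cond}(H^TH)$. Altogether this gives $\mathrm{Cond}(H^TH)^3\mathrm{III}_{\alpha}$.

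The main obstacle is this last bookkeeping: keeping every ratio of the form $\rho(\cdot)/\rho_{\min}(\cdot)$ expressed through $\mathrm{Cond}(H^TH)$ and not through $\alpha$. Since $\alpha$ can dominate $\rho(H^TH)$, I will only ever divide $\alpha$ by $\rho_{\min}((H^TH)_{\alpha})\ge\alpha$, and I will rely on \eqref{Eq:ProjOperatorRd}, whose proof already absorbs $\alpha$. Collecting the numerical constants then gives $\tilde C_1,\tilde C_2,\tilde C_3$.
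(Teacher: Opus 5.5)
Your proposal is correct and follows the paper's proof step for step: the same three-term telescoping identity, the same split $4T_{1,\alpha}+4T_{2,\alpha}+2\tilde{\mathrm{III}}_{\alpha}$ with $\tilde{\mathrm{III}}_{\alpha}\le\mathrm{III}_{\alpha}$, the same resolvent identity, and the same use of \cref{Prop:EigenControl} to express every ratio through $\mathrm{Cond}(H^TH)$ (your grouping of the $\mathrm{III}_{\alpha}$ coefficient in $T_{2,\alpha}$ via \eqref{Eq:ProjOperatorRd} differs only cosmetically). One slip to fix: in the $E^Tu$ branch of $T_{1,\alpha}$ the coefficient is $\rho(E^TE)\,\rho_{\min}((\hat H^T\hat H)_{\alpha})^{-2}\rho(\hat\Delta)^2\rho_{\min}((H^TH)_{\alpha})^{-1}$, so you dropped one factor $\rho_{\min}((\hat H^T\hat H)_{\alpha})^{-1}$; moreover, absorbing $\alpha$ there and in $T_{2,\alpha}$ needs $\rho_{\min}((\hat H^T\hat H)_{\alpha})\ge\max(\alpha,\rho_{\min}(H^TH)/2)$, that is, \eqref{ProjOperatorRa} together with the trivial bound $(\hat H^T\hat H)_{\alpha}\succeq\alpha I_K$, which the paper also uses (implicitly in \eqref{Eq:ProjOperatorRLema}).
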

\begin{proof}
We use the following decomposition,
\begin{multline*}
P_{[H]_{\alpha}}(u)-P_{[\hat{H}]_{\alpha}}(u)=\\
(H-\hat{H})(\hat{H}^T\hat{H})_{\alpha}^{-1}\hat{H}^{T}u+H\big((H^TH)_{\alpha}^{-1}-(\hat{H}^T\hat{H})_{\alpha}^{-1}\big)\hat{H}^Tu+H(H^TH)_{\alpha}^{-1}(H-\hat{H})^{T}u.
\end{multline*}
It follows that
\begin{align}
    \lefteqn{\|P_{[H]_{\alpha}}(u)-P_{[\hat{H}]_{\alpha}}(u)\|^2}\nonumber\\
    &\le4u^T\hat{H}(\hat{H}^T\hat{H})_{\alpha}^{-1}(\hat{H}-H)^T(\hat{H}-H)(\hat{H}^T\hat{H})_{\alpha}^{-1}\hat{H}^Tu\nonumber\\
    &\quad+4u^T\hat{H}\big((\hat{H}^T\hat{H})_{\alpha}^{-1}-(H^TH)_{\alpha}^{-1}\big)H^TH\big((\hat{H}^T\hat{H})_{\alpha}^{-1}-(H^TH)_{\alpha}^{-1}\big)\hat{H}^Tu\nonumber\\
    &\quad+2(H^Tu-\hat{H}^Tu)^T(H^TH)_{\alpha}^{-1}H^TH(H^TH)_{\alpha}^{-1}(H^Tu-\hat{H}^Tu)\nonumber\\
    &:=4T_{1,\alpha}+4T_{2,\alpha}+2\tilde{\mathrm{III}}_{\alpha}.\label{Eq:ProjOperatorRdecompa}
\end{align}

\subsubsection[First term]{Bound on $T_{1,\alpha}$}
We first consider the term $T_{1,\alpha}$,
$$T_{1,\alpha}=u^T\hat{H}(\hat{H}^T\hat{H})_{\alpha}^{-1}(\hat{H}-H)^T(\hat{H}-H)(\hat{H}^T\hat{H})_{\alpha}^{-1}\hat{H}^Tu.$$
We decompose this term as follows:
\begin{align*}
T_{1,\alpha}&\le2u^T\hat{H}(H^TH)_{\alpha}^{-1}(\hat{H}-H)^T(\hat{H}-H)(H^TH)_{\alpha}^{-1}\hat{H}^Tu\\
&\quad+2u^T\hat{H}((\hat{H}^T\hat{H})_{\alpha}^{-1}-(H^TH)_{\alpha}^{-1})(\hat{H}-H)^T(\hat{H}-H)((\hat{H}^T\hat{H})_{\alpha}^{-1}-(H^TH)_{\alpha}^{-1})\hat{H}^Tu\\
&=:T_{11}+T_{12}.
\end{align*}
First we concentrate our attention on 
\begin{align*}
    T_{12}&\le
    2\ \|E(\hat{H}^T\hat{H})_{\alpha}^{-1}((\hat{H}^T\hat{H})_{\alpha}-(H^TH)_{\alpha})(H^TH)_{\alpha}^{-1}\hat{H}^Tu\|^{2}\\
    &\le2\frac{\rho\big(E^TE\big)}{\rho_{\min}\big((\hat{H}^T\hat{H})_{\alpha}\big)^{2}}u^T\hat{H}(H^TH)_{\alpha}^{-1}((\hat{H}^T\hat{H})_{\alpha}-(H^TH)_{\alpha})^{2}(H^TH)_{\alpha}^{-1}\hat{H}^Tu\\
    &\le 4\frac{\rho\big(E^TE\big)}{\rho_{\min}\big((\hat{H}^T\hat{H})_{\alpha}\big)^{2}}u^TE(H^TH)_{\alpha}^{-1}((\hat{H}^T\hat{H})_{\alpha}-(H^TH)_{\alpha})^{2}(H^TH)_{\alpha}^{-1}E^Tu\\
    &\quad+4\frac{\rho\big(E^TE\big)}{\rho_{\min}\big((\hat{H}^T\hat{H})_{\alpha}\big)^{2}}u^TH(H^TH)_{\alpha}^{-1}((\hat{H}^T\hat{H})_{\alpha}-(H^TH)_{\alpha})^{2}(H^TH)_{\alpha}^{-1}H^Tu.
\end{align*}
Thanks to \cref{Prop:EigenControl} we get 
\begin{equation}\label{Eq:ProjOperatorRLema}
\frac{\rho\big(E^TE\big)}{\rho_{\min}\big((\hat{H}^T\hat{H})_{\alpha}\big)}\le \frac{\alpha}{2}\frac{1}{\alpha}\le \frac{1}{2}.
\end{equation}
\begin{align}
    \lefteqn{T_{12}\le}\nonumber\\
    &\ 2\frac{1}{\rho_{\min}\left((\hat{H}^T\hat{H})_{\alpha}\right)}u^TE(H^TH)_{\alpha}^{-1}((\hat{H}^T\hat{H})_{\alpha}-(H^TH)_{\alpha})^{2}(H^TH)_{\alpha}^{-1}E^Tu\nonumber\\
    &\quad+2\frac{1}{\rho_{\min}\left((\hat{H}^T\hat{H})_{\alpha}\right)}u^TH(H^TH)_{\alpha}^{-1}((\hat{H}^T\hat{H})_{\alpha}-(H^TH)_{\alpha})^{2}(H^TH)_{\alpha}^{-1}H^Tu\nonumber\\
    &\le2 \frac{\rho\big((\hat{H}^T\hat{H})_{\alpha}-(H^TH)_{\alpha}\big)^2}{\rho_{\min}\big((\hat{H}^T\hat{H})_{\alpha}\big)\rho_{\min}\big((H^TH)_{\alpha}\big)}(u^T\hat{H}-u^TH)(H^TH)_{\alpha}^{-1}(H^TH)_{\alpha}(H^TH)_{\alpha}^{-1}(\hat{H}^Tu-H^Tu)\nonumber\\
    &\quad+2u^TH(H^TH)_{\alpha}^{-1}((\hat{H}^T\hat{H})_{\alpha}-(H^TH)_{\alpha})(\hat{H}^T\hat{H})_{\alpha}^{-1}((\hat{H}^T\hat{H})_{\alpha}-(H^TH)_{\alpha})(H^TH)_{\alpha}^{-1}H^Tu
\end{align}
Thanks to \cref{Prop:EigenControl} we have
\begin{align*}
\frac{\rho(\hat{H}^T\hat{H}-H^TH)^{2}}{\rho_{\min}\big((\hat{H}^T\hat{H})_{\alpha}\big)\rho_{\min}\big((H^TH)_{\alpha}\big)}&\le \frac{2\alpha^{2}+2\rho(H^TH)^{2}}{\rho_{\min}\big((\hat{H}^T\hat{H})_{\alpha}\big)\rho_{\min}\big((H^TH)_{\alpha}\big)}\\
&\le 2+4\mathrm{Cond}(H^TH)\\
&\le 8\mathrm{Cond}(H^TH).
\end{align*}
\begin{align}\label{Eq:ProjOperatorRThreeStars}
T_{12}&\le 2\cdot 32 \mathrm{Cond}(H^TH)^{2}\mathrm{III}_{\alpha}+2\frac{1}{\rho_{\min}\left((\hat{H}^T\hat{H})_{\alpha}\right)}\mathrm{II}_{\alpha}.
\end{align}
Now we propose a bound for $T_{11}$.
\begin{align}
    T_{11}&\le
    \ 2u^T\hat{H}(H^TH)_{\alpha}^{-1}(\hat{H}-H)^T(\hat{H}-H)(H^TH)_{\alpha}^{-1}\hat{H}^Tu\nonumber\\
    &\le 4u^TH(H^TH)_{\alpha}^{-1}(\hat{H}-H)^{T}(\hat{H}-H)(H^TH)_{\alpha}^{-1}H^Tu\nonumber\\
    &\quad+4(u^T\hat{H}-u^TH)(H^TH)_{\alpha}^{-1}(\hat{H}-H)^T(\hat{H}-H)(H^TH)_{\alpha}^{-1}(\hat{H}^Tu-H^Tu)\nonumber\\
    &\le 4\mathrm{I}_{\alpha}+4\frac{\rho\big((\hat{H}-H)^T(\hat{H}-H)\big)}{\rho_{\min}\big((H^TH)_{\alpha}\big)}\mathrm{III}_{\alpha}\nonumber\\
    &\le 4\mathrm{I}_{\alpha} + 2\mathrm{III}_{\alpha}.\label{Eq:ProjOperatorRFourstars}
\end{align}
The last step is obtained by \eqref{Eq:ProjOperatorRLema}.
\paragraph{Final bound on $T_{1,\alpha}$}
We deduce from \eqref{Eq:ProjOperatorRThreeStars} and \eqref{Eq:ProjOperatorRFourstars} that
\begin{equation}\label{Eq:ProjOperatorT1AlphR}
    T_{1,\alpha}\le 4\mathrm{I}_{\alpha} + \big(2+64\mathrm{Cond}(H^TH)\big)\mathrm{III}_{\alpha}+ 2\frac{1}{\rho_{\min}\left((\hat{H}^T\hat{H})_{\alpha}\right)}\mathrm{II}_{\alpha}.
\end{equation}
\subsubsection[Second bound]{Bound on $T_{2,\alpha}$}
We focus on 
$$T_{2,\alpha}=u^T\hat{H}\big((\hat{H}^T\hat{H})_{\alpha}^{-1}-(H^TH)_{\alpha}^{-1}\big)H^TH\big((\hat{H}^T\hat{H})_{\alpha}^{-1}-(H^TH)_{\alpha}^{-1}\big)\hat{H}^Tu.$$
We recall that $\Lambda_{u,\alpha}=(H^TH)_{\alpha}^{-1}H^Tu$. We use the following decomposition,
\begin{align*}
    T_{2,\alpha}&\le \ 2u^TH\big((\hat{H}^T\hat{H})_{\alpha}^{-1}-(H^TH)_{\alpha}^{-1}\big)H^TH\big((\hat{H}^T\hat{H})_{\alpha}^{-1}-(H^TH)_{\alpha}^{-1}\big)H^Tu\\
    &  \quad+ 2u^TE\big((\hat{H}^T\hat{H})_{\alpha}^{-1}-(H^TH)_{\alpha}^{-1}\big)H^TH\big((\hat{H}^T\hat{H})_{\alpha}^{-1}-(H^TH)_{\alpha}^{-1}\big)E^Tu\\
    &\le 2\Lambda_{u,\alpha}^T\big((\hat{H}^T\hat{H})_{\alpha}-(H^TH)_{\alpha}\big)(\hat{H}^T\hat{H})_{\alpha}^{-1}H^TH(\hat{H}^T\hat{H})_{\alpha}^{-1}\big((\hat{H}^T\hat{H})_{\alpha}-(H^TH)_{\alpha}\big)\Lambda_{u,\alpha}\\
    & \quad+ 2 \|H(\hat{H}^T\hat{H})_{\alpha}^{-1}((\hat{H}^T\hat{H})_{\alpha}-(H^TH)_{\alpha})(H^TH)_{\alpha}^{-1}E^Tu\|^{2}\\
    &\le 2\frac{\rho(H^TH)}{\rho_{\min}\big((\hat{H}^T\hat{H})_{\alpha}\big)^2}u^TH(H^TH)_{\alpha}^{-1}((\hat{H}^T\hat{H})_{\alpha}-(H^TH)_{\alpha})^2(H^TH)_{\alpha}^{-1}H^Tu\\
    & \quad+ 2 \frac{\rho(H^TH)}{\rho_{\min}\big((\hat{H}^T\hat{H})_{\alpha}\big)^{2}}\frac{\rho\big(\hat{H}^T\hat{H}-H^TH\big)^2}{\rho_{\min}\big((H^TH)_{\alpha}\big)}\mathrm{III}_{\alpha}.
\end{align*}
Thanks to \cref{Prop:EigenControl}, we have
\begin{align*}
\frac{\rho(\hat{H}^T\hat{H}-H^TH)^{2}}{\rho_{\min}\big((\hat{H}^T\hat{H})_{\alpha}\big)\rho_{\min}\big((H^TH)_{\alpha}\big)}&\le \frac{2\alpha^{2}+2\rho(H^TH)}{\rho_{\min}\big((\hat{H}^T\hat{H})_{\alpha}\big)\rho_{\min}\big((H^TH)_{\alpha}\big)}\\
&\le 2+4\mathrm{Cond}(H^TH)\\
&\le 8\mathrm{Cond}(H^TH).
\end{align*}
Thus, 
\begin{align}
    T_{2,\alpha}&\le 4\frac{\rho(H^TH)}{\rho_{\min}(H^TH)}\frac{1}{\rho_{\min}\left((\hat{H}^T\hat{H})_{\alpha}\right)}\mathrm{II}_{\alpha}+32\mathrm{Cond}(H^TH)^{3}\mathrm{III}_{\alpha}.
\end{align}
\paragraph{Final bound on $T_{2}$}
We deduce from what precedes that
\begin{equation}\label{Eq:ProjOperatorT2AlphR}
    T_{2,\alpha}\le4\mathrm{Cond}(H^TH)\frac{1}{\rho_{\min}\left((\hat{H}^T\hat{H})_{\alpha}\right)}\mathrm{II}_{\alpha}+32\mathrm{Cond}(H^TH)^{3}\mathrm{III}_{\alpha}.
\end{equation}
\paragraph*{Bound on $\tilde{\mathrm{III}}_{\alpha}$}
We recall the expression of the term $\tilde{\mathrm{III}}_{\alpha}$ given in \eqref{Eq:ProjOperatorTermTildeIII}: $$\tilde{\mathrm{III}}_{\alpha}=(u^TH-u^T\hat{H})(H^TH)_{\alpha}^{-1}H^TH(H^TH)_{\alpha}^{-1}(H^Tu-\hat{H}^Tu).$$
The fact that $\rho\bigg(H^TH(H^TH)_{\alpha}^{-1}\bigg)\le 1$, directly implies 
\begin{equation}\label{Eq:ProjOperatorTildeR}
\tilde{\mathrm{III}}_{\alpha}\le \mathrm{III}_{\alpha}.
\end{equation}
\paragraph{Final Bound}
The decomposition \eqref{Eq:ProjOperatorRdecompa}, and inequalities \eqref{Eq:ProjOperatorT1AlphR}, \eqref{Eq:ProjOperatorT2AlphR}  and \eqref{Eq:ProjOperatorTildeR} yield
\begin{multline}
\|(P_{[\hat{H}]_{\alpha}}-P_{[H]_{\alpha}})(u)\|^{2}\le16\mathrm{I}_{\alpha}+ (8+16\cdot4\mathrm{Cond}(H^TH))\frac{1}{\rho_{\min}\left((\hat{H}^T\hat{H})_{\alpha}\right)}\mathrm{II}_{\alpha}\\ 
+(9+4\cdot64\mathrm{Cond}(H^TH)+4\cdot32\mathrm{Cond}(H^TH)^{3})\mathrm{III}_{\alpha}.
\end{multline}
This concludes the proof.
\end{proof}
\subsection{Detailed proofs for each Scenario (Ridge version)}\label{Sec:ProjOperatorDetailed}
The final step is to upper bound the terms $\mathrm{I}_{\alpha}, \mathrm{II}_{\alpha}$ and $\mathrm{III}_{\alpha}$ introduced in \eqref{Eq:ProjOperatorTermIAlph}, \eqref{Eq:ProjOperatorTermIIAlph}, and \eqref{Eq:ProjOperatorTermIIIAlph} for each scenario  by taking the supremum on all $u$ such that $u^Tu=1$.  We have fixed $\alpha$ thanks to \cref{Prop:EigenControl}.  We propose a bound for the three terms in the regularized cases, for each scenario. 
We recall that $\Lambda_{u}$ and $\Lambda_{u,\alpha}$ are defined in \eqref{Eq:ProjOperatorLambda} and \eqref{Eq:ProjOperatorLambdaRegularized}.
\paragraph*{}All along this proof, we will use \cref{Prop:DeviationResult} several times (this number is independent of $n$ and $K$), for each use we will replace $\delta$ by $\frac{\delta}{2}$ in the event $\mathcal{B}_{\delta}$ in \cref{Def:EventB} in order to create a new event which will be the intersection of $\mathcal{B}_{\delta}$ and the one created by \cref{Prop:DeviationResult}. It allows to simultaneously have the property of \cref{Prop:DeviationResult} and the inner property of $\mathcal{B}_{\delta}$ on $\mathrm{Tr}(E^TE)$ with a probability higher than $1-\delta$. 
\paragraph*{The case of term $\mathrm{II}_{\alpha}$} The term $\mathrm{II}$ is more complex than terms $\mathrm{I}$ and $\mathrm{III}$ and will be handled as follows. First, recall that $\Lambda_{u}=(H^TH)^{-1}H^Tu$. We have
\begin{align*}
    \|(\hat{H}^T\hat{H}-H^TH)\Lambda_{u,\alpha}\|^2&=\|(E^TH+H^TE+E^TE)\Lambda_{u,\alpha}\|^2\\
    &\le2\|(E^TH+H^TE)\Lambda_{u,\alpha}\|^2+2\|E^TE\Lambda_{u,\alpha}\|^2\\
    &\le 4\|E^TH\Lambda_{u,\alpha}\|^2+4\|H^TE\Lambda_{u,\alpha}\|^2+2\|E^TE\Lambda_{u,\alpha}\|^2.
\end{align*}
Hence, 
\begin{equation}\label{Eq:ProjOperatorDecompIIAlph}
\mathrm{II}_{\alpha}\le 4\|E^TH\Lambda_{u,\alpha}\|^2+4\|H^TE\Lambda_{u,\alpha}\|^2+2\|E^TE\Lambda_{u,\alpha}\|^2.
\end{equation}
The term $\mathrm{II}_{\alpha}$ will be handled after these three terms have been bounded for each Scenario. 
\subsubsection{Scenario 1 and 2}\label{Subsub:Scen12Reg}
We recall that the terms $\mathrm{I}_{\alpha}, \mathrm{II}_{\alpha}$ and $\mathrm{III}_{\alpha}$ are introduced in \eqref{Eq:ProjOperatorTermIAlph}, \eqref{Eq:ProjOperatorTermIIAlph}, and \eqref{Eq:ProjOperatorTermIIIAlph}. The terms $\Lambda_{u}$ and $\Lambda_{u,\alpha}$ are defined in \eqref{Eq:ProjOperatorLambda} and \eqref{Eq:ProjOperatorLambdaRegularized}.
\subsubsection*{Scenario 1 and 2: Term $\mathrm{I}_{\alpha}$}
We use the same method as case without the regularization. We recall that 
$$\mathrm{I}_{\alpha}=\|E(H^TH)_{\alpha}^{-1}H^Tu\|^{2}.$$ 
Using the deviation result of \cref{Prop:DeviationResult}, we get, on $\mathcal{B}_{\delta}$, 
$$\mathrm{I}_{\alpha}\le C_{\delta}\gamma^{2}n\Lambda_{u,\alpha}^TS\Lambda_{u,\alpha}\le C_{\delta,}n\gamma^{2}u^TH(H^TH)_{\alpha}^{-1}S(H^TH)_{\alpha}^{-1}H^Tu.$$
We use the following to obtain the same bounds as for the non regularized version. We take the $\sup$ on all $u\in\RR^{n}$ such that $u^Tu=1$,
\begin{align*}
    \mathrm{I}_{\alpha}
    &\le C_{\delta}n\gamma^{2}\rho\bigg(H(H^TH)_{\alpha}^{-1}S(H^TH)_{\alpha}^{-1}H^T\bigg)\\
    &\le C_{\delta}n\gamma^{2}\rho\bigg((H^TH)(H^TH)_{\alpha}^{-1}\bigg)\rho\bigg(S(H^TH)_{\alpha}^{-1}\bigg)\\
    &\le C_{\delta}n\gamma^{2}\rho\bigg(S(H^TH)^{-1}\bigg).
\end{align*}
Where the last step results from 
\begin{align*}
\rho\bigg(S(H^TH)_{\alpha}^{-1}\bigg)&=\rho\bigg(S(H^TH)^{-1}(H^TH)(H^TH)_{\alpha}^{-1}\bigg)\\
&\le \rho\bigg(S(H^TH)^{-1}\bigg)\rho\bigg(H^TH(H^TH)_{\alpha}^{-1}\bigg)\\
&\le \rho\bigg(S(H^TH)^{-1}\bigg).
\end{align*}
\paragraph*{Final bound on $\mathrm{I}_{\alpha}$ :}
On the event $\mathcal{B}_{\delta}$, we have
\begin{equation}\label{Eq:ProjOperatorS12TermIAlph}
\mathrm{I}_{\alpha}\le C_{\delta}n\gamma^{2}\rho\left(S(H^TH)^{-1}\right).
\end{equation}
\subsubsection*{Scenario 1 and 2: Term $\mathrm{II}_{\alpha}$}
We recall that the second term is induced by the estimation of the inverse of the Gram matrix $H^TH$. It is defined by 
$$u^TH(H^TH)_{\alpha}^{-1}\big((\hat{H}^T\hat{H})_{\alpha}-(H^TH)_{\alpha}\big)^{2}(H^TH)_{\alpha}^{-1}H^Tu.$$
We will use the decomposition of the Term $\mathrm{II}_{\alpha}$ given in \eqref{Eq:ProjOperatorDecompIIAlph}. It yields three terms $\|H^TE\Lambda_{u,\alpha}\|^{2},$ $\|E^TH\Lambda_{u,\alpha}\|^{2}$ and $\|E^TE\Lambda_{u,\alpha}\|^{2}$ that we will study below. The vectors $E^TH\Lambda_{u,\alpha}$ and $H^TE\Lambda_{u,\alpha}$ follow centered multivariate normal distributions. First, $$\mathrm{Var}(H^TE\Lambda_{u,\alpha})=H^T\mathrm{Var}(E\Lambda_{u,\alpha})H=\gamma^{2}\Lambda_{u,\alpha}^TS\Lambda_{u,\alpha}H^TH.$$ We have $(E^TH\Lambda_{u,\alpha})_i=E_{\cdot,i}^TH\Lambda_{u,\alpha}$, We can then compute 
$$\EE[E_{\cdot,i}^TH\Lambda_{u,\alpha}E_{\cdot,j}^TH\Lambda_{u,\alpha}]=\gamma^{2}\Lambda_{u,\alpha}^TH^TH\Lambda_{u,\alpha}S_{ij}.$$
Hence we get
$\mathrm{Var}(E^TH\Lambda_{u,\alpha})=\gamma^{2}\|H\Lambda_{u,\alpha}\|^2S$. On $\mathcal{B}_{\delta}$, thanks to the deviation results of \cref{Prop:DeviationResult}, we get
\begin{align*}
\|H^TE\Lambda_{u,\alpha}\|^{2}&\le C_{\delta}\gamma^{2}\Lambda_{u,\alpha}^TS\Lambda_{u,\alpha}\mathrm{Tr}(H^TH),\\
\|E^TH\Lambda_{u,\alpha}\|^{2}&\le C_{\delta}\gamma^{2}\|H\Lambda_{u,\alpha}\|^{2}\mathrm{Tr}(S).
\end{align*}
Using the fact that $\rho(E^TE)\le \frac{\alpha}{2}$ we also obtain 
$$\|E^TE\Lambda_{u,\alpha}\|^{2}\le \rho(E^TE)\|E\Lambda_{u,\alpha}\|^{2}\le \frac{\alpha}{2}\|E\Lambda_{u,\alpha}\|^{2}.$$
Applying the deviation result of \cref{Prop:DeviationResult} on these norms of Gaussian vectors, we get, on the event $\mathcal{B}_{\delta}$,
$$\mathrm{II}_{\alpha}\le C_{\delta}\gamma^{2}\big(\Lambda_{u,\alpha}^TS\Lambda_{u,\alpha}\mathrm{Tr}(H^TH)+\|H\Lambda_{u,\alpha}\|^{2}\mathrm{Tr}(S)+\frac{1}{2} \alpha n\Lambda_{u,\alpha}^TS\Lambda_{u,\alpha}\big).$$
As for the proof of the non regularized estimation, we consider here $\rho_{\min}\big((\hat{H}^T\hat{H})_{\alpha}\big)^{-1}\mathrm{II}_{\alpha}$.
We study separately the terms appearing in the right-hand side above. First,
\begin{align*}
    \Lambda_{u,\alpha}^TS\Lambda_{u,\alpha}&\le\rho\bigg(H(H^TH)_{\alpha}^{-1}S(H^TH)_{\alpha}^{-1}H^T\bigg)\\
    &\le \rho\bigg(S^{\frac{1}{2}}(H^TH)_{\alpha}^{-1}H^TH(H^TH)_{\alpha}^{-1}S^{\frac{1}{2}}\bigg)\\
    &\le \rho\bigg(S (H^TH)_{\alpha}^{-1}H^TH(H^TH)_{\alpha}^{-1}\bigg)\\
    &\le \rho\bigg(S (H^TH)_{\alpha}^{-1}\bigg)\rho\bigg(H^TH(H^TH)_{\alpha}^{-1}\bigg)\\
    &\le \rho\bigg(S(H^TH)^{-1}\bigg).
\end{align*}
In the same way, we get 
\begin{align*}
    \|H\Lambda_{u,\alpha}\|^{2}&\le \rho\bigg(H(H^TH)_{\alpha}^{-1}H^TH(H^TH)_{\alpha}^{-1}H^T\bigg)\\
    &\le\rho\bigg((H^TH)_{\alpha}^{-\frac{1}{2}}H^TH(H^TH)_{\alpha}^{-\frac{1}{2}}\bigg)\rho\bigg(H(H^TH)_{\alpha}^{-1}H^T\bigg)\\
    &\le \rho\bigg((H^TH)_{\alpha}^{-1}H^TH\bigg)^{2}\\
    &\le 1.
\end{align*}
Hence we deduce
\begin{align*}
    \frac{\mathrm{II}_{\alpha}}{\rho_{\min}\big((\hat{H}^T\hat{H})_{\alpha}\big)}&\le C_{\delta}\gamma^{2}\left(\frac{2\rho\big(S (H^TH)^{-1}\big)\mathrm{Tr}(H^TH)}{\rho_{\min}(H^TH)}+\frac{2\mathrm{Tr}(S)}{\rho_{\min}(H^TH)}+\frac{1}{2}\rho\big(S(H^TH)^{-1}\big)\right)\\
    &\le C_{\delta}\gamma^{2}\left(2K\mathrm{Cond}(H^TH)\rho\big(S(H^TH)^{-1}\big)+\frac{2\mathrm{Tr}(S)}{\rho_{\min}(H^TH)}+\frac{1}{2}\rho\big(S(H^TH)^{-1}\big)\right),
\end{align*}
where we used the inequality $\rho_{\min}\left(\hat{H}^T\hat{H})_{\alpha}\right)\ge \max(\alpha,\frac{\rho_{\min}(H^TH)}{2})$ which follows from \cref{Prop:EigenControl}.
\paragraph*{Final bound on $\rho_{\min}\left((\hat{H}^T\hat{H})_{\alpha}\right)^{-1}\mathrm{II}_{\alpha}$ :}
\begin{equation}\label{Eq:ProjOperatorS12TermIIAlph}
\frac{\mathrm{II}_{\alpha}}{\rho_{\min}((\hat{H}^T\hat{H})_{\alpha})}\le C_{\delta}\gamma^{2}\left(2K\mathrm{Cond}(H^TH)\rho\big(S(H^TH)^{-1}\big)+\frac{2\mathrm{Tr}(S)}{\rho_{\min}(H^TH)}+\frac{1}{2}\rho\big(S(H^TH)^{-1}\big)\right).
\end{equation}
\subsubsection*{Scenario 1 and 2: Term $\mathrm{III}_{\alpha}$}
The last term to study is the one corresponding to the estimated coordinates of $u$ in the regularized basis $H$.
It is defined by 
$$\mathrm{III}_{\alpha}=(u^TH-u^T\hat{H})(H^TH)_{\alpha}^{-1}(H^Tu-\hat{H}^Tu).$$
We proceed as for the term $\mathrm{III}$, by applying the deviation result of \cref{Prop:DeviationResult}. It results that, on $\mathcal{B}_{\delta}$,
\begin{equation}\label{Eq:ProjOperatorIIIAlphScen12}
\mathrm{III}_{\alpha}\le C_{\delta}\|u\|^{2} \gamma^{2}\mathrm{Tr}\big((H^TH)_{\alpha}^{-1}S\big).
\end{equation}
It remains to consider $\mathrm{Tr}\big((H^TH)_{\alpha}^{-1}S\big)$. We can bound this quantity by
\begin{align*}
    \mathrm{Tr}\big((H^T
H)_{\alpha}^{-1}S\big)&\le\mathrm{Tr}\bigg((H^TH)_{\alpha}^{-1}H^TH(H^TH)^{-1}S\bigg)\\
    &\le \mathrm{Tr}\bigg((H^TH)_{\alpha}^{-1}H^TH(H^TH)^{-\frac{1}{2}}S(H^TH)^{-\frac{1}{2}}\bigg)
\end{align*}
We use the fact that $(H^TH)^{-\frac{1}{2}}$ is a polynomial in $H^TH$ and then commute with $H^TH$ and $(H^TH)_{\alpha}^{-1}$.
\begin{align}    
    \mathrm{Tr}\big((H^TH)_{\alpha}^{-1}S\big)&\le \rho\bigg((H^TH)_{\alpha}^{-1}H^TH\bigg)\mathrm{Tr}\bigg((H^TH)^{-\frac{1}{2}}S(H^TH)^{-\frac{1}{2}}\bigg)\nonumber\\
    &\le \mathrm{Tr}\bigg((H^TH)^{-1}S\bigg)\label{Eq:ProjOperatorAlphTr},
\end{align}
We used in the last step the inequality \eqref{Eq:ProjOperatorTraceTrick} with the positive symmetric matrices\\ $(H^TH)^{\frac{1}{2}}(H^TH)_{\alpha}^{-1}(H^TH)^{\frac{1}{2}}$ and $(H^TH)^{-\frac{1}{2}}S(H^TH)^{-\frac{1}{2}}$.
Inequalities \eqref{Eq:ProjOperatorIIIAlphScen12} and \eqref{Eq:ProjOperatorAlphTr} lead to, on $\mathcal{B}_{\delta}$,
$$\mathrm{III}_{\alpha}\le C_{\delta}\gamma^{2}\mathrm{Tr}\big((H^TH)^{-1}S\big).$$
\paragraph*{Final bound for $\mathrm{III}_{\alpha}$ :}
\begin{equation}\label{Eq:ProjOperatorS12TermIIIAlph}
C_{\delta}\gamma^{2}\mathrm{Tr}\big((H^TH)^{-1}S\big).
\end{equation}
This concludes the proofs of theorems \cref{Th:ProjOperatorRowsPenalinise} and \cref{Th:ProjOperatorIndependentPenalinise} by setting $S=I_{K}$ thanks to \cref{Th:ProjOperatorSkeletonProofRegularized} by regrouping the three different main terms \eqref{Eq:ProjOperatorS12TermIAlph}, \eqref{Eq:ProjOperatorS12TermIIAlph}, and \eqref{Eq:ProjOperatorS12TermIIIAlph} with the bias term of \cref{Subsec:ProjOperatorBiasRegularized}. 
\subsubsection{Scenario 3}
We recall that the terms $\mathrm{I}_{\alpha}, \mathrm{II}_{\alpha}$ and $\mathrm{III}_{\alpha}$ are introduced in \eqref{Eq:ProjOperatorTermIAlph}, \eqref{Eq:ProjOperatorTermIIAlph}, and \eqref{Eq:ProjOperatorTermIIIAlph}. The terms $\Lambda_{u}$ and $\Lambda_{u,\alpha}$ are defined in \eqref{Eq:ProjOperatorLambda} and \eqref{Eq:ProjOperatorLambdaRegularized}.
\subsubsection*{Scenario 3: Term $\mathrm{I}_{\alpha}$}
As for the regularized independent rows case in Scenario 2, we use the previous bounds obtained for the independent columns case. 
We get on the event $\mathcal{B}_{\delta},$
$$\mathrm{I}_{\alpha}\le C_{\delta} \gamma^{2}\mathrm{Tr}(A)\|\Lambda_{u,\alpha}\|^{2}.$$
Let us bound $\|\Lambda_{u,\alpha}\|^{2}$. We have, for $u\in\mathbb{R}^{n}$ such that $u^Tu=1$,
\begin{align*}
    \|\Lambda_{u,\alpha}\|^{2}&=\|(H^TH)_{\alpha}^{-1}H^Tu\|^{2}\\
    &=u^TH(H^TH)_{\alpha}^{-1}(H^TH)_{\alpha}^{-1}H^Tu\\
    &\le \rho\left(H(H^TH)_{\alpha}^{-1}(H(H^TH)_{\alpha}^{-1})^{T}\right)\\
    &\le \rho\left((H^TH)_{\alpha}^{-1}H^TH(H^TH)_{\alpha}^{-1}\right)\\
    &\le\rho\left((H^TH)_{\alpha}^{-1}H^TH\right)\rho_{\min}(H^TH)^{-1}\\
    &\le \rho_{\min}(H^TH)^{-1}.
\end{align*}
\paragraph*{Final bound on $\mathrm{I}_{\alpha}$ in Scenario 3}
Finally, we obtain 
\begin{equation}\label{Eq:ProjOperatorS3TermIAlph}
\mathrm{I}_{\alpha}\le C_{\delta}\gamma^{2}\frac{\mathrm{Tr}(A)}{\rho_{\min}(H^TH)}.
\end{equation}
\subsubsection*{Scenario 3: Term $\mathrm{II}_{\alpha}$}
We focus first on $\rho_{\min}\big((\hat{H}^T\hat{H})_{\alpha}^{-1}\big)\mathrm{II}_{\alpha}.$
We decompose $\mathrm{II}_{\alpha}$ in three terms. We recall the inequality \eqref{Eq:ProjOperatorDecompIIAlph}:
\begin{equation}\label{Eq:ProjOperatoratriple}
\mathrm{II}_{\alpha} \le 4\|E^TH\Lambda_{u,\alpha}\|^2+4\|H^TE\Lambda_{u,\alpha}\|^2+2\|E^TE\Lambda_{u,\alpha}\|^2.
\end{equation}
We focus on terms $\|E^TH\Lambda_{u,\alpha}\|^{2}$ and $\|H^TE\Lambda_{u,\alpha}\|^{2}$. For $H^TE\Lambda_{u,\alpha}$, we have 
$$\mathrm{Var}(H^TE\Lambda_{u,\alpha})=H^T\mathrm{Var}(E\Lambda_{u,\alpha})H=\gamma^{2}H^TAH\|\Lambda_{u,\alpha}\|^{2}.$$
Using the deviation result of \cref{Prop:DeviationResult}, we deduce
\begin{equation}\label{Eq:ProjOperatorbtriple}
\|H^TE\Lambda_{u,\alpha}\|^{2}\le C_{\delta}\gamma^{2}\|\Lambda_{u,\alpha}\|^{2}\mathrm{Tr}(H^TAH).
\end{equation}
We have $(E^TH\Lambda_{u,\alpha})_{i}=E_{\cdot,i}^TH\Lambda_{u,\alpha}$. Hence, 
$$\EE[\Lambda_{u,\alpha}^TH^T E_{\cdot,i}E_{\cdot,j}^{T}H\Lambda_{u,\alpha}]=\gamma^{2}\delta_{ij}\Lambda_{u,\alpha}^TH^TAH\Lambda_{u,\alpha}.$$
Therefore $\mathrm{Var}(E^TH\Lambda_{u,\alpha})=\gamma^{2}\Lambda_{u,\alpha}^TH^TAH\Lambda_{u,\alpha}I_{K}.$
Then, on $\mathcal{B}_{\delta}$,
\begin{equation}\label{Eq:ProjOperatorctriple}
\|E^TH\Lambda_{u,\alpha}\|^{2}\le C_{\delta}\gamma^{2}K\Lambda_{u,\alpha}^{T}H^TAH\Lambda_{u,\alpha}.
\end{equation}
For the last term, we use the following inequalities,
\begin{equation}\label{Eq:ProjOperatordtriple}
\|E^TE\Lambda_{u,\alpha}\|^{2}\le \rho(E^TE)\|E\Lambda_{u,\alpha}\|^{2}\le \frac{\alpha}{2}\|E\Lambda_{u,\alpha}\|^{2}.
\end{equation}
\eqref{Eq:ProjOperatoratriple}, \eqref{Eq:ProjOperatorbtriple}, \eqref{Eq:ProjOperatorctriple} and \eqref{Eq:ProjOperatordtriple} give
$$\mathrm{II}_{\alpha}\le C_{\delta}\gamma^{2}\bigg(K\Lambda_{u,\alpha}^TH^TAH\Lambda_{u,\alpha}+\|\Lambda_{u,\alpha}\|^{2}\mathrm{Tr}(H^TAH)+\frac{1}{2}\alpha\|\Lambda_{u,\alpha}\|^{2}\mathrm{Tr}(A)\bigg).$$
We need to bound the terms $\Lambda_{u,\alpha}H^TAH\Lambda_{u,\alpha}$ and $\|\Lambda_{u,\alpha}\|^{2}$ for $u\in\mathbb{R}^{n}$ such that $u^Tu=1$. 
We already have $\|\Lambda_{u,\alpha}\|^{2}\le 1.$
Then 
\begin{align*}
    \Lambda_{u,\alpha}^TH^TAH\Lambda_{u,\alpha}&\le \rho\bigg(H(H^TH)_{\alpha}^{-1}H^TAH(H^TH)_{\alpha}^{-1}H^T\bigg)\\
    &\le\rho\bigg(A^{\frac{1}{2}}\big(H(H^TH)_{\alpha}^{-1}H^T\big)^{2}A^{\frac{1}{2}}\bigg)\\
    &\le \rho\bigg(A\big(H(H^TH)_{\alpha}^{-1}H^T\big)^{2}\bigg)\\
    &\le \rho(A)\rho\bigg(H(H^TH)_{\alpha}^{-1}H^T\bigg)^{2}\\
    &\le \rho(A).
\end{align*}
Using additionally that $\rho_{\min}\left((H^TH)_{\alpha}\right)\ge\max(\alpha,\frac{\rho_{\min}(H^TH)}{2})$ thanks to \cref{Prop:EigenControl}, we finally get 
$$\rho_{\min}\big((\hat{H}^T\hat{H})_{\alpha}\big)^{-1}
\mathrm{II}_{\alpha}\le C_{\delta}\gamma^{2}\bigg(2K\frac{\rho(A)}{\rho_{\min}(H^TH)}+2\frac{\mathrm{Tr}(H^TAH)}{\rho_{\min}(H^TH)}+\frac{1}{2}\frac{\mathrm{Tr}(A)}{\rho_{\min}(H^TH)}\bigg).$$
Using the inequality $\mathrm{Tr}(H^TAH)\le K\rho(A)\rho(H^TH)$ we simplify this bound.
\paragraph*{Final bound on $\rho_{\min}\left((\hat{H}^T\hat{H})_{\alpha}\right)^{-1}\mathrm{II}_{\alpha}$}
\begin{equation}\label{Eq:ProjOperatorS3TermIIAlph}
C_{\delta}\gamma^{2}\bigg(2K\frac{\rho(A)}{\rho_{\min}(H^TH)}+2\frac{K\rho(A)\rho(H^TH)}{\rho_{\min}(H^TH)}+\frac{1}{2}\frac{\mathrm{Tr}(A)}{\rho_{\min}(H^TH)}\bigg).
\end{equation}
\subsubsection*{Scenario 3: Term $\mathrm{III}_{\alpha}$}
Doing the same for $\mathrm{III}_{\alpha}$, we consider the Gaussian vector $(H^TH)_{\alpha}^{-\frac{1}{2}}E^Tu$
The associated variance matrix is $$(H^TH)_{\alpha}^{-1/2}\mathrm{Var}(E^Tu)(H^TH)_{\alpha}^{-1/2}=(H^TH)_{\alpha}^{-1/2}\gamma^{2}u^TAu(H^TH)_{\alpha}^{-1/2}.$$
Thanks to the deviation result of \cref{Prop:DeviationResult}, we get, on $\mathcal{B}_{\delta},$
$$\mathrm{III}_{\alpha}\le C_{\delta}u^TAu\ \mathrm{Tr}\big((H^TH)_{\alpha}^{-1}\big).$$
We clearly have by taking the supremum on $u\in\mathbb{R}^{n}$ such that $u^Tu=1$, it is straightforward that, on $\mathcal{B}_{\delta}$,
\begin{equation}\label{Eq:ProjOperatorS3TermIIIAlph}
\mathrm{III}_{\alpha}\le C_{\delta}\gamma^{2}\rho(A)\mathrm{Tr}\big((H^TH)^{-1}\big).
\end{equation}
We conclude the proof of \cref{Th:ProjOperatorColumnsPenalinise} thanks to the decomposition of \cref{Th:ProjOperatorSkeletonProofRegularized} and the inequalities \eqref{Eq:ProjOperatorS3TermIAlph}, \eqref{Eq:ProjOperatorS3TermIIAlph} and \eqref{Eq:ProjOperatorS3TermIIIAlph}.
\subsubsection{Scenario 4}\label{Subsub:Scen4Reg}
We recall that the terms $\mathrm{I}_{\alpha}, \mathrm{II}_{\alpha}$ and $\mathrm{III}_{\alpha}$ are introduced in \eqref{Eq:ProjOperatorTermIAlph}, \eqref{Eq:ProjOperatorTermIIAlph}, and \eqref{Eq:ProjOperatorTermIIIAlph}. The terms $\Lambda_{u}$ and $\Lambda_{u,\alpha}$ are defined in \eqref{Eq:ProjOperatorLambda} and \eqref{Eq:ProjOperatorLambdaRegularized}.
\subsubsection*{Scenario 4: Term $\mathrm{I}_{\alpha}$}
We consider the distribution of $E\Lambda_{u,\alpha}$. We have 
$$\mathrm{Var}(E\Lambda_{u,\alpha})=\gamma^{2}\sum_{l,m=1}^{K}(\Lambda_{u,\alpha})_{l}(\Lambda_{u,\alpha})_{m}A_{l}VA_{m}^{T}.$$
Thanks to the deviation result of \cref{Prop:DeviationResult}, we have, on the event $\mathcal{B}_{\delta}$,
\begin{align*}
\|E\Lambda_{u,\alpha}\|^{2}&\le C_{\delta,K}\gamma^{2}\sum_{l,m=1}^{K}(\Lambda_{u,\alpha})_{m}(\Lambda_{u,\alpha})_{l}\mathrm{Tr}\big(A_{l}VA_{m}^{T}\big)\\
&\le C_{\delta,K}\gamma^{2}\bigg(\sum_{l=1}^{K}(\Lambda_{u,\alpha})_{l}\sqrt{\mathrm{Tr}(A_{l}^TVA_{l})}\bigg)^{2}\\ 
&\le \gamma^{2}C_{\delta,K}\|\Lambda_{u,\alpha}\|^{2}\sum_{l=1}^{K}\mathrm{Tr}(A_{l}^TVA_{l}),
\end{align*}
where $C_{\delta,K}$ is depending only on $\delta$ and $\ln(\frac{K}{\delta})$.
On $\mathcal{B}_{\delta},$ we have 
$$\mathrm{I}_{\alpha}\le C_{\delta,K}\gamma^{2}\|\Lambda_{u,\alpha}\|^{2}\sum_{l=1}^{K}\mathrm{Tr}\big(A_{l}^TVA_{l}\big).$$
\paragraph*{Final bound for $\mathrm{I}_{\alpha}$ in Scenario 4}
For normalized $u\in\mathbb{R}^{K},$ $\|\Lambda_{u,\alpha}\|^{2}\le \rho_{\min}(H^TH)^{-1}$.
Considering $u\in\mathbb{R}^{K}$ such that $u^Tu=1$, we have on $\mathcal{B}_{\delta},$
\begin{equation}\label{Eq:ProjOperatorS4TermIAlph}
\mathrm{I}_{\alpha}\le C_{\delta,K}\gamma^{2}\frac{\sum_{l=1}^{K}\mathrm{Tr}\big(A_{l}^TVA_{l}\big)}{\rho_{\min}(H^TH)}.\end{equation}
\subsubsection*{Scenario 4: Term $\mathrm{II}_{\alpha}$}
Using the decomposition \eqref{Eq:ProjOperatorDecompIIAlph},
$$\mathrm{II}_{\alpha} \le 4\|E^TH\Lambda_{u,\alpha}\|^2+4\|H^TE\Lambda_{u,\alpha}\|^2+2\|E^TE\Lambda_{u,\alpha}\|^2.$$
We focus on the two first terms.
Replacing $\Lambda_{u}$ by $\Lambda_{u,\alpha}$ for the term $\mathrm{II}_{\alpha}$ in Scenario 4 in \cref{Subsub:Scen4} we obtain similar results. We have, on $\mathcal{B}_{\delta},$
\begin{align*}
\|H^TE\Lambda_{u,\alpha}\|^{2}&\le C_{\delta,K}\gamma^{2}\rho(H^TH)\|\Lambda_{u,\alpha}\|^{2}\sum_{l=1}^{K}\mathrm{Tr}(A_{l}VA_{l}^{T}),\\
\|E^TH\Lambda_{u,\alpha}\|^{2}&\le C_{\delta,K}\gamma^{2}\|H\Lambda_{u,\alpha}\|^{2}\sum_{l=1}^{K}\mathrm{Tr}(A_{i}VA_{i}^{T}).
\end{align*}
For the last term we use the following inequalities,
$$\|E^TE\Lambda_{u,\alpha}\|^{2}\le \rho(E^TE)\|E\Lambda_{u,\alpha}\|^{2}\le \frac{\alpha}{2}\|E\Lambda_{u,\alpha}\|^{2}.$$
Following the same procedure we get 
$$\mathrm{II}_{\alpha}\le C_{\delta,K}\gamma^{2}\bigg(\rho(H^TH)\|\Lambda_{u,\alpha}\|^{2}+\|H\Lambda_{u,\alpha}\|^{2}+\frac{1}{2}\alpha\|\Lambda_{u,\alpha}\|^{2}\bigg)\bigg(\sum_{l=1}^{K}\mathrm{Tr}\big(A_{l}^TVA_{l}\big)\bigg).$$
\paragraph*{Final bound on $\mathrm{II}_{\alpha}$ in Scenario 4:}
Using $\rho_{\min}\left((\hat{H}^T\hat{H})_{\alpha}\right)\ge\max(\alpha,\frac{\rho_{\min}(H^TH)}{2})$, on $\mathcal{B}_{\delta},$ we obtain
\begin{equation}\label{Eq:ProjOperatorS4TermIIAlph}
\rho_{\min}\big((\hat{H}^T\hat{H})_{\alpha}\big)^{-1}\mathrm{II}_{\alpha}\le C_{\delta,K}\gamma^{2}\big(2\mathrm{Cond}(H^TH)+2+\frac{1}{2}\big)\frac{\sum_{l=1}^{K}\mathrm{Tr}\big(A_{l}^TVA_{l}\big)}{\rho_{\min}(H^TH)}.
\end{equation}
\subsubsection[]{Scenario 4: Term $\mathrm{III}_{\alpha}$}
The last quantity is $\|(H^TH)_{\alpha}^{-1/2}E^Tu\|^{2}$. It satisfies 
$$\mathrm{Var}\left((H^TH)_{\alpha}^{-1/2}E^Tu\right)=(H^TH)_{\alpha}^{-1/2}\mathrm{Var}(E^TU)(H^TH)_{\alpha}^{-1/2}.$$
Then, 
$$\mathrm{Tr}\left(\mathrm{Var}((H^TH)_{\alpha}^{-1}E^Tu)\right)=\mathrm{Tr}\big((H^TH)_{\alpha}^{-1}B\big),$$ where $B_{ij}=\gamma^{2}u^TA_{i}VA_{j}^Tu$.
Inquality \eqref{Eq:ProjOperatorTraceTrick} on the matrices $(H^TH)^{-1}$ and $B$ yields
$$\mathrm{Tr}\big((H^TH)^{-1}B\big)\le \rho_{\min}(H^TH)^{-1}\mathrm{Tr}(B).$$
We apply the deviation result of \cref{Prop:DeviationResult} in order to have, on $\mathcal{B}_{\delta}$,
$$\mathrm{III}_{\alpha}\le C_{\delta,K} \gamma^{2}\rho_{\min}\big((H^TH)_{\alpha}\big)^{-1}\|u\|^{2}\sum_{l=1}^{K}\mathrm{Tr}(A_{l}^TVA_{l}).$$
\paragraph*{Final bound for $\mathrm{III}_{\alpha}$ in Scenario 4:}
Considering $u\in\mathbb{R}^{K}$ such that $u^Tu=1,$ we have $\mathcal{B}_{\delta},$
\begin{equation}\label{Eq:ProjOperatorS4TermIIIAlph}
\mathrm{III}_{\alpha}\le C_{\delta,K}\gamma^{2}\frac{\sum_{l=1}^{K}\mathrm{Tr}\big(A_{l}^TVA_{l}\big)}{\rho_{\min}(H^TH)}.
\end{equation}
We conclude the proof of \cref{Th:ProjOperatorGeneralizedPenalinise}, thanks to the decomposition of \cref{Th:ProjOperatorSkeletonProofRegularized} and inequalities \eqref{Eq:ProjOperatorS4TermIAlph}, \eqref{Eq:ProjOperatorS4TermIIAlph} and \eqref{Eq:ProjOperatorS4TermIIIAlph}.

%%%%%%%%%%%%%%%%%%%%%%%%%%%%%%%%%%%%%%%%%%%%%%%%%%%%%%%
%%%%%%%%%%%%%%%%%%%%%%%%%%%%%%%%%%%%%%%%%%%%%%%%%%%%%%%
%% 
%%%%%%%%%%%%%%%%%%%%%%%%%%%%%%%%%%%%%%%%%%%%%%%%%%%%%%%
%%%%%%%%%%%%%%%%%%%%%%%%%%%%%%%%%%%%%%%%%%%%%%%%%%%%%%

\printbibliography

\end{document}